\pgfplotsset{width=7cm,compat=1.8}
\tikzset{
    >=stealth',
    punkt/.style={
           rectangle,
           rounded corners,
           draw=black, very thick,
           text width=8em,
           minimum height=2em,
           text centered},
    pil/.style={
           ->,
           thick}
}
\definecolor{myred}{rgb}{0.5, 0, 0}
\definecolor{linkred}{rgb}{0.8, 0, 0}
\definecolor{mygreen}{rgb}{0, 0.5, 0}
\definecolor{citegreen}{rgb}{0, 0.8, 0}
\definecolor{yqqqqq}{rgb}{0.5019607843137255,0,0}
\definecolor{bluetable}{rgb}{0,0.2,0.5}
\setlist[itemize]{noitemsep} 
\newtheorem{Thm}{Theorem}[subsection]
\newtheorem*{Thm*}{Theorem}
\newtheorem{Lem}{Lemma}[subsection]
\newtheorem{Prop}{Proposition}[subsection]
\newtheorem{Cor}{Corollary}[subsection]
\newtheorem*{Cor*}{Corollary}
\theoremstyle{definition}
\newtheorem{Defn}{Definition}[subsection]
\newtheorem*{Remark*}{Remark}
\newtheorem{Remark}{Remark}[subsection]
\newtheorem*{Notation}{Notation}
\newtheorem{Ex}{Example}
\newtheorem{Exs}[Ex]{Examples}
\numberwithin{equation}{section}
   \def\MR#1{}
\DeclarePairedDelimiter\floor{\lfloor}{\rfloor}
\newcommand{\oset}[3][0ex]{%
  \mathrel{\mathop{#3}\limits^{
    \vbox to#1{\kern-2\ex@
    \hbox{$\scriptstyle#2$}\vss}}}}
\newcommand{\acfib}{\oset{\sim}{\twoheadrightarrow}}
\newcommand{\we}{\oset{\sim}{\to}}
\newcommand{\accof}{\oset{\sim}{\hookrightarrow}}
\def\mylongmapsto#1{%
\begin{tikzpicture}
\draw (0,0.5mm) -- (0,-0.5mm);
\newlength\mylength
\setlength{\mylength}{\widthof{#1}}
\draw[->] (0,0) -- (1.2\mylength,0) node[above,midway] {#1};
\end{tikzpicture}
}
\newcommand\adjunct[4]{\xymatrix@C=2pc@R=2pc{#1\ar @<1.25ex>[rr]^{#3}&\perp&#2\ar @<1.25ex>[ll]^{#4}}}
\def\A{\mathbb{A}}
\def\Z{\mathbb{Z}}
\def\N{\mathbb{N}}
\def\Part{\mathcal{P}}
\def\C{\mathbb{C}}
\def\Schur{\mathbb{S}}
\def\L{\boldsymbol{L}}
\def\RR{\boldsymbol{R}}
\def\TT{\mathbb{T}}
\newcommand{\PP}{\mathbb P}
\newcommand{\lieg}{\mathfrak{g}}
\newcommand{\gl}{\mathfrak{gl}}
\newcommand{\slin}{\mathfrak{sl}}
\newcommand{\M}{\mathfrak{M}}
\newcommand{\vv}{\mathbf{v}}
\newcommand{\ww}{\mathbf{w}}
\newcommand{\balpha}{{\bm{\alpha}}}
\newcommand{\bbeta}{{\bm{\beta}}}
\newcommand{\Mo}{{\mathfrak{M}^0}}
\def\O{\mathcal{O}}
\newcommand{\TE}{\mathcal{TE}}
\newcommand{\SE}{\mathcal{SE}}
\newcommand{\COF}{{\mathcal{C}\text{\emph{of}}}}
\newcommand{\WE}{\mathcal{WE}}
\newcommand{\FIB}{{\mathcal{F}\text{\emph{ib}}}}
\newcommand{\FIBB}{{\mathcal{F}\text{ib}}}
\newcommand{\llp}[1]{{}^\boxslash\!{#1}}
\def\CC{\mathtt{C}}
\def\DD{\mathtt{D}}
\def\Tcat{\mathtt{T}}
\def\Grp{\mathtt{Grp}}
\def\Vect{\mathtt{Vect}}
\def\dgVect{\mathtt{DGVect}}
\def\Alg{\mathtt{Alg}}
\def\scAlg{\mathtt{sCommAlg}}
\def\Sch{\mathtt{Sch}}
\def\Aff{\mathtt{Aff}}
\def\dAff{\mathtt{d\,Aff}}
\def\DGSch{\mathtt{DGSch}}
\def\DGAff{\mathtt{DGAff}}
\def\Mod{\mathtt{Mod}}
\newcommand{\QCohc}{\mathtt{QCoh}}
\newcommand{\Bimod}{\mathtt{Bimod}}
\def\cAlg{\mathtt{CommAlg}}
\def\Sets{\mathtt{Sets}}
\def\DGA{\mathtt{DGA}}
\def\cDGA{\mathtt{CDGA}}
\def\Ho{{\mathtt{Ho}}}
\def\op{\mathtt{op}}
\newcommand{\dual}{\text{\textasciicaron}}
\newcommand{\fr}{{\mathrm fr}}
\newcommand{\vir}{{\mathrm vir}}
\newcommand{\cyc}{{\mathrm cyc}}
\newcommand{\pt}{{\mathrm pt}}
\newcommand{\cof}{{\mathrm cof}}
\newcommand{\loc}{{\mathrm loc}}
\newcommand{\st}{{\text{-}\mathrm st}}
\newcommand{\acts}{{\, \curvearrowright\,} }
\newcommand{\Mor}{{\mathrm Mor}}
\newcommand{\Hom}{{\mathrm{Hom}}}
\newcommand{\Gra}{{\mathbb{G}\mathrm{r}}}
\newcommand{\GL}{{\mathrm{GL}}}
\newcommand{\SL}{{\mathrm{SL}}}
\newcommand{\End}{\mathrm{End}}
\newcommand{\Aut}{\mathrm{Aut}}
\newcommand{\Span}{\mathrm{Span}}
\newcommand{\ch}{\mathrm{ch}}
\newcommand{\END}{\underline{\mathrm{End}}}
\newcommand{\Id}{{\mathrm{Id}}}
\newcommand{\Rep}{{\mathrm{Rep}}}
\newcommand{\HH}{{\mathrm{H}}}
\newcommand{\KK}{{\mathrm{K}}}
\newcommand{\Rring}{{\mathcal{R}}}
\newcommand{\Qring}{{\mathcal{Q}}}
\newcommand{\DRep}{{\mathrm{DRep}}}
\newcommand{\ev}{\mathrm{ev}}
\newcommand{\n}{{\natural}}
\newcommand{\nn}{{{\natural} {\natural}}}
\newcommand{\Spec}{{\rm{Spec}}}
\newcommand{\Proj}{{\rm{Proj}}}
\newcommand{\Sym}{{\rm{Sym}}}
\newcommand{\id}{{\rm{Id}}}
\newcommand{\rk}{{\rm{rk}}}
\newcommand{\im}{{\rm{Im}}}
\begin{document}


\title{Derived Representation Schemes and Nakajima Quiver Varieties} 
\author{Stefano D'Alesio} 
\address{Stefano D'Alesio, Departement Mathematik,
ETH Z\"urich,
8092 Z\"urich, Switzerland}
\email{stefano.dalesio@math.ethz.ch}
\date{} 

\subjclass[2010]{Primary 14D21, 16G20; Secondary 16E05, 16E45, 19L47}

\begin{abstract}
\noindent
We introduce a derived representation scheme associated with a quiver, which may be thought of as a derived version of a Nakajima variety. We exhibit an explicit model for the derived representation scheme as a Koszul complex and by doing so we show that it has vanishing higher homology if and only if the moment map defining the corresponding Nakajima variety is flat. In this case we prove a comparison theorem relating isotypical components of the representation scheme to equivariant K-theoretic classes of tautological bundles on the Nakajima variety. As a corollary of this result we obtain some integral formulas present in the mathematical and physical literature since a few years, such as the formula for Nekrasov partition function for the moduli space of framed instantons on $S^4$. On the technical side we extend the theory of relative derived representation schemes by introducing derived partial character schemes associated with reductive subgroups of the general linear group and constructing an equivariant version of the derived representation functor for algebras with a rational action of an algebraic torus. 
\end{abstract} 


\setcounter{tocdepth}{1}     
\maketitle
{
{\color{bluetable}
\hypersetup{linkcolor=bluetable}
\tableofcontents
}
}


\section{Introduction}
\label{sec:zero}

Nakajima quiver varieties are certain Poisson varieties constructed from linear representations of a quiver. They were firstly introduced by Nakajima (\cite{Na2},\cite{Na3}) as a geometric tool to study representations of Kac-Moody algebras. They are also interesting from a purely geometric point of view, being a large class of examples of algebraic symplectic manifolds, many of which have been objects of study on their own (for example flag manifolds, framed moduli spaces of torsion free sheaves on $\mathbb{P}^2$, or a Lie algebra version of the character variety of a Riemann surface --- see \cite{BeSc}). More recent studies have also supported the idea that symplectic resolutions, and in particular hyperkähler reductions such as Nakajima quiver varieties, provide a bridge between enumerative geometry, representation theory and integrable systems (\cite{AgFrOk},\cite{NeOk},\cite{Ok3},\cite{Ok1},\cite{Ok2}). 

Quiver varieties are varieties of representations of a quiver: one fixes a vector space on each vertex of the quiver and then consider the linear space of representations obtained by associating to each arrow of the quiver a linear map. Kronheimer and Nakajima (\cite{KrNa}) have first introduced a framed version, which amounts to doubling the set of vertices and drawing a new arrow from each new vertex to its corresponding old one. One of the reasons for considering framed representations is that they appear naturally in the ADHM construction (\cite{AtHiDrMa}) of solutions of self-dual or antiself-dual Yang-Mills equations on $S^4$. They are also interesting from the point of view of representation theory of Lie algebras because dimension vectors of the framed vertices appear as highest weights of the representations (\cite{Na5}). The framing is equivalent to a simpler operation of adding just one vertex with dimension vector $1$, together with as many arrows to each vertex as the framing dimension (as pointed out in \cite{Cr}), however in this paper we consider the framed version of Nakajima quiver varieties. 

The framed quiver is then doubled, which means that each arrow gets doubled by an arrow that goes in the opposite direction: the linear space of representations becomes now a linear cotangent bundle $\smash{M(Q,\vv,\ww): =\TT^*L(Q^\fr,\vv,\ww)}$ (where $\vv,\ww$ are dimension vectors for, respectively, the original and framing vertices). The gauge group is a general linear group on the original vertices $G=G_\vv$ and there is a moment map 
\[
\mu : M(Q,\vv,\ww) \to \lieg^*
\]
in the form of a generalised ADHM equation. Nakajima quiver varieties are defined as Hamiltonian reductions of this action $\smash{G \acts M(Q,\vv,\ww)}$: either affine Hamiltonian reductions, $\smash{\Mo(Q,\vv,\ww)}=\smash{\mu^{-1}(0)\sslash G}$, or quasi-projective $\smash{\M^\chi(Q,\vv,\ww)= \mu^{-1}(0)\sslash_\chi G}$ with the usual tools of geometric invariant theory (\cite{MuFoKi}). For each choice of a (nontrivial) character $\smash{\chi: G\to \C^\times}$ there is a proper Poisson morphism
\begin{equation}
\label{p}
p:\M^\chi(Q,\vv,\ww) \to \Mo(Q,\vv,\ww)\,,
\end{equation}
which is often, but not always, a symplectic resolution of the singularities of $\Mo$.


\subsection{Outline and results}
\label{subsec:outlineandresults}

In this paper we link these varieties with some (derived) representation schemes. The idea of considering representation schemes is certainly not new, in fact it is motivated by the very first algebraic origin of these varieties (see, for example, representation schemes of preprojective algebras in \cite{CrEtGi} and \cite{EtGi}). However the derived version of representation schemes introduces some new invariants in a natural way.

The theory of representation schemes is recalled in detail in \S~\ref{subsec:classicalrep}. To a (unital, associative) algebra $A \in \Alg_k$ one associates $\Rep_V(A)$, the scheme of finite dimensional representations into a fixed vector space $V$. There is a relative version in which the algebra $A$ comes with a fixed structure $\iota: S \to A$ of algebra over another algebra $S$ with a fixed representation $\rho :S \to \End(V)$ and it is natural to define $\Rep_V(A)$ as the scheme of only those finite dimensional representations which are compatible with $\rho$. 

General definitions and results on representation schemes work well over any field $k$ of characteristic zero, but it is necessary to specialise to $k=\C$ in order to relate them to (Nakajima) quiver varieties, which are algebraic varieties over the complex numbers. The (complex) linear space of representations of a quiver $Q$ is a representation scheme of the form $\Rep_V(A)$, where $A=\C Q$ is the path algebra of the quiver. This fact is a consequence of one of the basic results in the theory of representations of quivers: 

\begin{center}
\emph{There is an equivalence of categories between the category of $\C$-linear representations of a quiver $Q$ and the category of left $\C Q$-modules.}
\end{center}

The construction can be easily adapted to include the framing and the doubling of the quiver, and also the operation of taking the fiber of zero through the moment map. In other words it is possible to write the scheme $\mu^{-1}(0)$ as a representation scheme for the path algebra of the framed, doubled quiver, modulo the ideal $\mathcal{I}_\mu$ defined by the moment map:
\[
\mu^{-1}(0) = \Rep_{\C^\vv\oplus \C^\ww} ( A),\qquad A= \C \overline{Q^\fr} \slash \mathcal{I}_\mu\,,
 \]
where $\smash{\C^\vv=\oplus_a \C^{v_a}}$ is the direct sum of the vector spaces placed on the original vertices of the quiver and $\smash{\C^\ww=\oplus_a \C^{w_a}}$ is the one on the framing. We denote this representation scheme also simply by $\Rep_{\vv,\ww}(A)$. The gauge group by which we take the quotient is $G=G_\vv:=\prod_a \GL_{v_a}(\C)  \subset G_\vv\times G_\ww$. This group also arises naturally in the context of representation functors. It is possible to construct an invariant subfunctor by the group $G$ and by doing so we obtain the affine Nakajima variety as the partial character variety  
\[
\Mo(Q,\vv,\ww) = \mu^{-1}(0)\sslash G = \Rep_{\vv,\ww}^G ( A) \, .
\]
Now that we have such a model for this singular scheme we can try to resolve it using the machinery of model categories and in particular the theory of derived representation schemes (\cite{BeFeRa}, \cite{BeKhRa}): we consider the derived scheme
\[
\DRep_{\vv,\ww} (A) \cong \Rep_{\vv, \ww}(A_\cof)\, ,
\]
where $\smash{A_\cof \acfib A}$ is a cofibrant replacement in the category of differential graded algebras. It is (the homotopy class of) a differential graded scheme of the form $\smash{X=(X_0,\O_{X,\bullet})}$, where $\smash{X_0 \cong M(Q,\vv,\ww) }$ is the vector space of linear representations of the framed, doubled quiver, and $\O_{X,\bullet}$ is a sheaf of dg-algebras whose zero homology gives: 
\[
\pi_0(X) = \Spec\big(\HH_0 (\O_{X,\bullet}) \big) =\mu^{-1}(0)\,.
\]
We exhibit an explicit (minimal) resolution $A_\cof \acfib A$ for which this derived representation scheme is a well-known object when it comes to studying resolutions of a singular locus:
\begin{Thm*} [\ref{thm:drep=koszul} in \S~\ref{subsec:zerolocuskoszul}]
There is a cofibrant resolution $A_\cof \acfib A \in \DGA_S$ which gives a model for the derived representation scheme as the (spectrum of the) Koszul complex on the moment map:
\begin{equation}
\label{drep=koszul}
\DRep_{\vv,\ww} \big( A \big) \cong \Rep_{\vv,\ww} \big(  A_\cof) = \Spec \big( \O(M(Q,\vv,\ww))  \otimes \Lambda^\bullet \lieg \big)\, .
\end{equation}
\end{Thm*}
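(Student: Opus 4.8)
The isomorphism $\DRep_{\vv,\ww}(A)\cong\Rep_{\vv,\ww}(A_\cof)$ is simply the definition of the derived representation scheme, valid once $A_\cof\acfib A$ is a cofibrant replacement; so the real content is to exhibit an \emph{explicit} such replacement whose only generators above homological degree $0$ live in degree $1$, and then to compute its representation scheme. The plan is to take, in degree $0$, the path algebra $B=\C\overline{Q^\fr}$ of the framed, doubled quiver, which is formally smooth over the semisimple base $S$ of vertex idempotents and satisfies $\Rep_{\vv,\ww}(B)=M(Q,\vv,\ww)$, and then to perform a one-step Koszul--Tate extension: adjoin for each original vertex $a$ a homological degree $1$ generator $\xi_a\in e_aA_\cof e_a$ with $d\xi_a=r_a$, where $r_a=\sum_h\epsilon(h)\,hh^*\in e_aBe_a$ is the moment-map (preprojective) relation at $a$ (including the framing contribution) that generates the two-sided ideal $\mathcal I_\mu$. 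By construction $A_\cof=B\langle\xi_a\rangle$ is a graded $S$-algebra with $\HH_0(A_\cof)=B/\mathcal I_\mu=A$.

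First I would dispatch cofibrancy, which is the soft part: $B$ is the tensor $S$-algebra on the projective bimodule of arrows of $\overline{Q^\fr}$, hence cofibrant in $\DGA_S$, and the inclusion $B\hookrightarrow A_\cof$ freely attaching the $\xi_a$ is a semifree extension and therefore a cofibration. Thus $A_\cof$ is cofibrant, and the augmentation $A_\cof\twoheadrightarrow A$ sending $\xi_a\mapsto 0$ is the candidate acyclic fibration (it is a degreewise surjection automatically, the target being concentrated in degree $0$).

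The hard part will be to show that this augmentation is a \emph{quasi-isomorphism}, i.e. that $A_\cof$ is acyclic in every positive homological degree. The degree-$n$ part of $A_\cof$ is spanned by the words in $B$ carrying exactly $n$ of the symbols $\xi_a$, with $d$ substituting some $r_a$ for a $\xi_a$; vanishing of $\HH_{>0}$ is thus equivalent to the assertion that the relations $\{r_a\}$ form a regular sequence in the tensor algebra $B$, equivalently that $\mathcal I_\mu$ is free as a $B$-bimodule on the $r_a$ with no higher syzygies. I expect this to be the technical heart: it amounts to exactness of the two-sided Koszul complex
\[
0\to A\otimes_S A\longrightarrow A\otimes_S E\otimes_S A\longrightarrow A\otimes_S A\longrightarrow A\to 0,
\]
where $E$ is the $S$-bimodule of arrows of $\overline{Q^\fr}$, the injectivity of the leftmost map being precisely $\HH_{>0}(A_\cof)=0$. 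I would establish this by adapting the bimodule resolution of (deformed) preprojective algebras of Crawley-Boevey--Holland to the framed quiver, leaning on the formal smoothness of $B$. It is worth stressing that this is a statement about the \emph{noncommutative} algebra $B$: it holds regardless of whether the commutative moment map is flat, the failure of flatness becoming visible only after applying $\Rep_{\vv,\ww}$.

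Granting the quasi-isomorphism, the final computation is direct. Applying $\O\circ\Rep_{\vv,\ww}$ to the degree-$0$ algebra returns $\O(M(Q,\vv,\ww))$; and since $\xi_a$ lies in $e_aA_\cof e_a$ and is of size $v_a\times v_a$, the functor expands it into $v_a^2$ odd coordinates, that is, into the exterior factor $\Lambda^\bullet\gl_{v_a}$, so that summing over the original vertices produces the factor $\Lambda^\bullet\lieg$ with $\lieg=\bigoplus_a\gl_{v_a}$. It then remains to identify the induced differential: $\Rep_{\vv,\ww}$ carries $r_a$ to the $\gl_{v_a}^*$-valued function that is the $a$-component of the moment map $\mu$, so that $d\xi_a=r_a$ becomes contraction with $\mu$ and the differential on $\O(M)\otimes\Lambda^\bullet\lieg$ is exactly the Koszul differential. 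This yields $\Rep_{\vv,\ww}(A_\cof)=\Spec\!\big(\O(M(Q,\vv,\ww))\otimes\Lambda^\bullet\lieg\big)$, completing the identification.
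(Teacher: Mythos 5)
Your resolution is the paper's: $A_\cof$ is the path algebra of $\overline{Q^\fr}$ with one degree-$1$ loop adjoined at each original vertex ($\vartheta_a$ in the paper, $\xi_a$ for you) and $d\xi_a=\mu_a$; your cofibrancy argument (tensor $S$-algebra, semifree extension) is Lemma~\ref{lem:cof}, and your final identification of $\Rep_{\vv,\ww}(A_\cof)$ with $\Spec\big(\O(M(Q,\vv,\ww))\otimes\Lambda^\bullet\lieg\big)$ is the computation of \S~\ref{subsec:zerolocuskoszul}. The genuine divergence is in the acyclicity of $A_\cof$ (Lemma~\ref{lem:acfib}): the paper gives a self-contained combinatorial proof --- decompose into path spaces $P_{a,b}$, replace the framing pair $i_a,j_a$ by the cycle $c_a=i_aj_a$, filter by the number of $x,y$-arrows so that the associated graded differential is $d_{\mathrm{gr}}\vartheta_a=c_a$, and reduce to the elementary acyclic block $(k\langle\vartheta,c\rangle,\,d\vartheta=c)$ --- whereas you propose to deduce it from exactness of a bimodule (two-sided) Koszul complex, i.e.\ from $A$ being a noncommutative complete intersection. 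That route is viable, and the paper itself flags it in the closing Remark of \S~\ref{subsec:zerolocuskoszul} (NCCI theory and partial preprojective algebras, \cite{CrEtGi}, \cite{EtGi}); what it buys is a conceptual reason for acyclicity and contact with known bimodule resolutions, at the cost of importing nontrivial results instead of the paper's elementary filtration. Your remark that acyclicity is a statement about the noncommutative algebra, independent of flatness of the commutative moment map, is correct and is exactly how the paper separates Theorem~\ref{thm:cofrep} from Theorem~\ref{thm:0}.

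Two points in your sketch need repair before it closes. First, the displayed complex is mis-stated: its leftmost term cannot be $A\otimes_S A$, which contains a summand $Ae_c\otimes e_cA$ for \emph{every} vertex $c$ of $\overline{Q^\fr}$, framing vertices included, and on those extra summands the leftmost map is not even defined since no relation sits there; it must be the free $A$-bimodule on the relations, $\bigoplus_{a\in Q_0}Ae_a\otimes_\C e_aA$, with exactly one generator per \emph{original} vertex. Second, ``adapting Crawley-Boevey--Holland'' is not a formality: for the full preprojective algebra of a Dynkin quiver the analogous complex fails to be exact on the left, so the statement you need is false without the framing. What makes it true here is precisely that the relations are imposed only at the proper subset $Q_0$ of vertices of the framed quiver --- the partial preprojective algebra setting of \cite{CrEtGi} and \cite{EtGi} --- and this is also where the paper's own proof quietly uses the framing (the substitution $c_a=i_aj_a$ requires the framing arrows at every original vertex). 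With the complex corrected and that input properly invoked, your argument is complete.
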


A somewhat natural question is whether or not there is any relationship between Nakajima resolutions \eqref{p} and these derived schemes, and if it is possible to obtain informations about one of the two from the other:
\begin{equation}
\label{diagram}
\begin{tikzpicture}[node distance=2cm, auto,]
  \node[punkt] (Quiver) {Quiver Q};
   \node[punkt, right= of Quiver, right=1.5cm ] (Affine) {Affine Nakajima variety $\Mo = \mu^{-1}(0) 
   \sslash G$ };
   \node[below=of Affine,above=-0.2cm] (dummy1) {};
 \node[above=of Affine,above=-0.1cm] (dummy2) {};
 \node[punkt, right= of dummy1,right=1.9cm] (der) {Derived representation schemes};
  \node[punkt, right= of dummy2,right=1.9cm] (naka) {GIT Nakajima varieties $\M^\chi$};
   \draw (Quiver)   edge[pil,->, decorate, decoration =snake] (Affine);
 \draw (Affine)   edge[pil,->,bend left=45] node[right=1cm,above=-0.3cm]{\parbox{13em}{geometric \\ resolution}} (naka);
  \draw (Affine)   edge[pil,->,bend right=45] node[left=-3.3cm]{\parbox{15em}{algebraic \\ resolution}}   (der);
\node[right=of Affine, right =5cm] (relation) {\color{myred}Relationship?};
\draw (relation) edge [pil,->,color=myred] (naka);
\draw (relation) edge [pil,->,color=myred] (der);
 \end{tikzpicture}
\end{equation}

A first answer is a close relationship (an equivalence) between the condition of flatness for the moment map (which assures that $\M^\chi \to \Mo$ is indeed a resolution, for well-behaved characters $\chi$), and the derived representation schemes to have vanishing higher homologies:
\begin{Thm*} [\ref{thm:0} in \S~\ref{subsec:classicalresultsnaka}]
\label{thm:flatacyclic}
The derived representation scheme $\smash{\DRep_{\vv,\ww}\big(A \big)}$ has vanishing higher homologies if and only if $\mu^{-1}(0)\subset M(Q,\vv,\ww)$ is a complete intersection, which happens if and only if the moment map is flat.
\end{Thm*}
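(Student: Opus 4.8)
The plan is to run everything through the Koszul model supplied by Theorem~\ref{thm:drep=koszul}. Writing $M = M(Q,\vv,\ww)$, $r = \dim_\C \lieg$, and fixing a basis $\zeta_1,\dots,\zeta_r$ of $\lieg$, the components $\mu_i := \langle \mu, \zeta_i\rangle \in \O(M)$ are $r$ regular functions on the smooth affine space $M$, and the identification
\[
\DRep_{\vv,\ww}(A) \cong \Spec\big( \O(M) \otimes \Lambda^\bullet \lieg \big)
\]
exhibits the right-hand side as the Koszul complex $K_\bullet(\mu_1,\dots,\mu_r;\O(M))$, whose differential is contraction against $\mu \in \lieg^* \otimes \O(M)$ and whose zeroth homology is $\O(M)/(\mu_1,\dots,\mu_r)=\O(\mu^{-1}(0))$. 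Thus ``$\DRep_{\vv,\ww}(A)$ has vanishing higher homology'' means exactly that this Koszul complex is acyclic in positive degrees, and the whole statement reduces to two classical equivalences.

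First I would establish the equivalence with the complete intersection property. Since $M$ is affine space, $\O(M)$ is a polynomial ring, hence Cohen--Macaulay, and the ideal $I=(\mu_1,\dots,\mu_r)$ is proper because $0\in\mu^{-1}(0)$. By the depth-sensitivity of the Koszul complex, $K_\bullet$ is acyclic in positive degrees if and only if $\operatorname{depth}(I,\O(M)) = r$; in a Cohen--Macaulay ring depth equals height, so this is equivalent to $\operatorname{height}(I)=r$, i.e.\ to $\mu_1,\dots,\mu_r$ being a regular sequence and to $\mu^{-1}(0)$ having codimension $r$ in $M$ --- that is, to $\mu^{-1}(0)$ being a complete intersection.

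It then remains to show that $\mu^{-1}(0)$ is a complete intersection if and only if $\mu$ is flat. As $\lieg^*\cong\A^r$ is regular and $M$ is Cohen--Macaulay and equidimensional, miracle flatness reduces flatness of $\mu$ to the condition that the local fiber dimension $\dim_x \mu^{-1}(\mu(x))$ equals $\dim M - r$ at every $x\in M$. If $\mu$ is flat this holds, and specializing to the central fiber gives $\dim\mu^{-1}(0)=\dim M - r$, the complete intersection property. For the converse I would exploit the homogeneity of the moment map: the scaling action of $\C^\times$ on the vector space $M$ is contracting and $\mu$ is quadratic, hence equivariant for the weight-two action on $\lieg^*$, so for $\xi\neq 0$ one has $\mu^{-1}(\xi)\cong\mu^{-1}(t\cdot\xi)$ for all $t\neq 0$; letting $t\to 0$ and using upper semicontinuity of fiber dimension yields $\dim\mu^{-1}(\xi)\le\dim\mu^{-1}(0)$. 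Combined with the lower bound $\dim_x\mu^{-1}(\mu(x))\ge\dim M - r$ from Krull's height theorem (each fiber being cut out by $r$ equations), the hypothesis $\dim\mu^{-1}(0)=\dim M - r$ forces every local fiber dimension to equal $\dim M - r$, and miracle flatness concludes that $\mu$ is flat.

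The main obstacle is precisely the implication ``complete intersection $\Rightarrow$ flat'': flatness is a statement about \emph{all} fibers of $\mu$, whereas the complete intersection hypothesis controls only the central fiber $\mu^{-1}(0)$. The crux is therefore the homogeneity argument, which shows the central fiber to be of largest dimension and so lets a single fiber govern the entire family; the remaining equivalences are formal, being either the Koszul/depth dictionary or, once the Koszul model of Theorem~\ref{thm:drep=koszul} is in hand, a direct translation.
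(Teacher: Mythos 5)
Your proof is correct, and it diverges from the paper's in a meaningful way at the key geometric step. Both arguments run through the same Koszul model of Theorem~\ref{thm:drep=koszul}, and your depth/height dictionary for ``vanishing higher homology $\Leftrightarrow$ complete intersection'' is just an unwinding of the paper's Theorem~\ref{thm:koszulhomology} (regular sequence $\Leftrightarrow$ expected dimension $\Leftrightarrow$ acyclicity in positive degrees), so that part is essentially identical. The difference is in the equivalence ``complete intersection $\Leftrightarrow$ flat'': the paper first transports the problem to the unframed quiver $\Gamma = Q^\infty$ via the identification of moment maps in diagram~\eqref{mudiag}, and then invokes Crawley-Boevey's Theorem~1.1 (the equivalence of flatness with $\dim \mu_{\balpha}^{-1}(0) = \dim L(\overline{\Gamma},\balpha) - \dim \lieg_{\balpha}^{\n}$) as a black box. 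You instead prove this equivalence directly: flatness gives the expected fiber dimension everywhere, hence on the central fiber; conversely, since $\mu$ is quadratic homogeneous for the contracting $\C^\times$-action on $M(Q,\vv,\ww)$, upper semicontinuity of fiber dimension shows the central fiber dominates all others, Krull's height theorem bounds every local fiber dimension from below, and miracle flatness (source a polynomial ring, target $\lieg^* \cong \A^r$ smooth) closes the loop. This self-contained route reproves, in effect, the geometric half of Crawley-Boevey's theorem (his own argument for $(1)\Leftrightarrow(2)$ is of the same conical nature) and makes clear the statement needs no quiver combinatorics, only that the moment map is homogeneous on a vector space. What it does not give you --- and what the paper's citation buys --- is the link to the combinatorial criteria (3) and (4) of Theorem~\ref{thm:flatness}, which are what the paper actually uses in \S~\ref{sec:examples} to decide flatness for concrete dimension vectors $\vv,\ww$.
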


We remark that in general it might not be easy to compute homologies of derived representation schemes, and even just to predict until which degree the homology is nontrivial. Nevertheless, in this special situation it is possible to give a sufficient and necessary condition for the vanishing of higher homologies based on a geometric property (flatness) of the moment map. The importance of Theorem~\ref{thm:flatacyclic} is that there is a combinatorial criterium on the dimension vectors $\vv,\ww$ (proved by Crawley-Boevey, \cite{Cr}, based on the canonical decomposition of Kac, \cite{Kac}) for the flatness of the moment map for representations of quivers.

A second answer to the question in \eqref{diagram} comes when we compare some invariants associated with the derived representation schemes with others associated with the varieties $\M^\chi$. A natural choice is to consider tautological sheaves on the GIT quotient $\M^\chi$ constructed with the usual machinery developed by Kirwan (\S~\ref{subsec:kirwan}). Because of reductiveness of the gauge group $G$ we restrict to consider only tautological sheaves of the form $\mathcal{V}_\lambda$ induced from irreducible representations $V_\lambda$ of $G$. The push-forward of these sheaves in the K-theory of the affine Nakajima variety through the map \eqref{p} computes their ($T$-)equivariant Euler characteristics:
\begin{equation}
\label{1}
\chi_T(\M^\chi,\mathcal{V}_\lambda) \in \KK_T(\Mo)\, ,
\end{equation}
where $T=T_\ww \times T_\hbar$ is the product of the standard maximal torus in the other general linear group on the framing vertices $T_\ww \subset G_\ww$ and a $2$-dimensional torus $T_\hbar$ rescaling the symplectic form and the cotangent direction.

On the other hand also the representation homology $\HH_\bullet(A,\vv,\ww)$ (the homology of the derived representation scheme) is naturally a $G$-module and therefore decomposes into the direct sum of its isotypical components:
\begin{equation}
\HH_\bullet(A,\vv,\ww) = \bigoplus\limits_{\lambda} \Hom_{G}\big(V_\lambda, \HH_\bullet(A,\vv,\ww)\big) \otimes V_{\lambda} \, .
\end{equation}

The isotypical components $\Hom_G(V_\lambda,\HH_\bullet(A,\vv,\ww))$ are modules over the $G$-invariant zeroth homology $\HH_0(A,\vv,\ww)^G=\O(\mu^{-1}(0))^G$ and therefore their Euler characteristics define invariants in
\begin{equation}
\label{2}
\chi_T^\lambda(A,\vv,\ww) = \sum\limits_{i\geq 0 }(-1)^i \big[\Hom_G(V_\lambda, \HH_i(A,\vv,\ww))\big] \in \KK_T(\Mo)\, .
\end{equation}

It is tempting to compare the invariants defined in~\eqref{1} and~\eqref{2}, and the main results of this paper go in this direction. 
First of all, when we consider the trivial representation $V_\lambda=\C$, we prove that if the moment map is flat, then the two invariants are indeed equal:

\begin{Thm*} [\ref{thm:1} in \S~\ref{subsec:comparisontheorem}]
\label{thm:flatstr}
Let $\vv,\ww$ be dimension vectors for which the moment map is flat and let $\chi$ such that $\smash{\M^\chi(Q,\vv,\ww)}$ is a smooth variety (and therefore a resolution of $\smash{\Mo(Q,\vv,\ww)}$). Then we have
\begin{equation}
\label{flatstr}
p_*\big( [\O_{\M^\chi(Q,\vv,\ww)}] \big) = [ \O_{\Mo(Q,\vv,\ww)} ] = \chi_T^G(A,\vv,\ww)  \in \KK_T(\Mo(Q,\vv,\ww))\, .
\end{equation}
\end{Thm*}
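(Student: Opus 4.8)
The plan is to prove the two displayed equalities separately, using the middle term $[\O_{\Mo(Q,\vv,\ww)}]$ as a bridge: the right-hand equality is the ``derived'' statement and follows from the acyclicity established in Theorem~\ref{thm:flatacyclic}, while the left-hand equality is the ``geometric'' statement and reduces to the rationality of the singularities of the affine Nakajima variety. Throughout I would keep track of the $T=T_\ww\times T_\hbar$-action, which acts rationally on the path algebra and commutes with the gauge group $G$, so that every identification below is automatically $T$-equivariant and descends to an identity in $\KK_T(\Mo)$.

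For the right-hand equality I would argue as follows. Since the moment map is flat, Theorem~\ref{thm:flatacyclic} gives $\HH_i(A,\vv,\ww)=0$ for all $i>0$, so the representation homology is concentrated in degree zero with $\HH_0(A,\vv,\ww)=\O(\mu^{-1}(0))$. Because $G$ is reductive and we are in characteristic zero, the functor $(-)^G$ of taking $G$-invariants is exact, so the trivial isotypical component commutes with homology, $\Hom_G(\C,\HH_i(A,\vv,\ww))=\HH_i(A,\vv,\ww)^G$. Hence the alternating sum defining $\chi_T^G(A,\vv,\ww)$ collapses to its degree-zero term,
\[
\chi_T^G(A,\vv,\ww)=\big[\HH_0(A,\vv,\ww)^G\big]=\big[\O(\mu^{-1}(0))^G\big]=\big[\O(\Mo(Q,\vv,\ww))\big]=[\O_{\Mo(Q,\vv,\ww)}]\,,
\]
where the third equality is precisely the description $\Mo=\mu^{-1}(0)\sslash G$ recalled in the introduction, and exactness of $(-)^G$ guarantees that the $T$-weights are respected.

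For the left-hand equality I would use the standard behaviour of structure sheaves under a resolution. By hypothesis $p:\M^\chi\to\Mo$ is a proper birational morphism from the smooth variety $\M^\chi$, which carries a symplectic form as a smooth Nakajima variety, so in equivariant K-theory $p_*[\O_{\M^\chi}]=\sum_i(-1)^i[R^ip_*\O_{\M^\chi}]$ in $\KK_T(\Mo)$. The key geometric input is that the affine Nakajima variety $\Mo$ has rational singularities: admitting a symplectic resolution, it has symplectic (in particular canonical Gorenstein, hence rational) singularities. Consequently $Rp_*\O_{\M^\chi}\cong\O_{\Mo}$, i.e.\ $p_*\O_{\M^\chi}=\O_{\Mo}$ and $R^ip_*\O_{\M^\chi}=0$ for $i>0$, which gives $p_*[\O_{\M^\chi}]=[\O_{\Mo}]$. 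Since the resolution, the torus action rescaling the cotangent directions, and the symplectic form are all compatible, this isomorphism holds $T$-equivariantly.

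The main obstacle is the geometric input of the previous paragraph, namely verifying the $T$-equivariant vanishing $Rp_*\O_{\M^\chi}\cong\O_{\Mo}$, rather than the two formal K-theoretic manipulations, which are routine. Concretely one must ensure that $\Mo$ has rational singularities under the flatness hypothesis --- which one can obtain either by invoking the theory of symplectic singularities for quiver varieties, or, in the spirit of Theorem~\ref{thm:drep=koszul}, directly from the Koszul/complete-intersection description of $\mu^{-1}(0)$ together with normality of the quotient --- and that the Grauert--Riemenschneider-type vanishing holds equivariantly for the given $T$-action. Once this is in place the two equalities meet at $[\O_{\Mo}]$ and the theorem follows.
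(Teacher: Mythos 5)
Your proof of the right-hand equality coincides with the paper's: flatness plus Theorem~\ref{thm:0} concentrates the representation homology in degree zero, and reductivity of $G$ (Theorem~\ref{thm:derivedV}(b), Corollary~\ref{cor:derivedM0}) identifies $[\HH_0(A,\vv,\ww)^G]$ with $[\O_{\Mo}]$. For the left-hand equality you take a genuinely different, though closely related, route. The paper argues: since $\Mo$ is affine, $p_*[\O_{\M^\chi}]$ is the alternating sum of the cohomologies $\HH^i(\M^\chi,\O_{\M^\chi})$; these vanish for $i>0$ by Grauert--Riemenschneider (applicable to the structure sheaf because the symplectic form trivialises the canonical bundle); and the remaining term $\HH^0(\M^\chi,\O_{\M^\chi})=\O(\M^\chi)$ equals $\O(\Mo)$ by the Bezrukavnikov--Losev theorem quoted as Theorem~\ref{thm:nakalosevginz} (flat moment map implies the Stein factorisation map $\psi$ is an isomorphism). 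You instead package both facts into the single statement that $\Mo$ has rational singularities, from which $Rp_*\O_{\M^\chi}\cong\O_{\Mo}$ follows; this is valid, but note that what you flag as the ``main obstacle'' is precisely where the paper's citations do the work: rational singularities presupposes \emph{normality} of $\Mo$, and under the flatness hypothesis normality is exactly what Theorem~\ref{thm:nakalosevginz} (or Crawley-Boevey's normality theorem) supplies --- your alternative suggestion via the complete-intersection description of $\mu^{-1}(0)$ is not by itself sufficient, since a complete intersection is $S_2$ but need not be $R_1$, so Serre's criterion does not apply without further input. Also, your concern about equivariance is not a real obstacle: the comparison map $\O_{\Mo}\to Rp_*\O_{\M^\chi}$ is canonical, hence automatically $T$-equivariant, and the vanishing of higher direct images is an equivariant statement for free (a zero sheaf is equivariantly zero); the paper accordingly does not address it. In short, your route buys a cleaner one-line geometric input (rational singularities) at the cost of importing the symplectic-singularities package, whereas the paper's argument is self-contained given the two results it has already stated.
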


When we consider the Hilbert-Poincaré series of~\eqref{flatstr} we obtain an integral formula for the $T$-character of the ring of functions on the GIT quotient $\M^\chi$, that has the following form

\begin{equation}
\label{git=int}
\ch_T(\O(\M^\chi)) = \ch_T(\O(\Mo)) = \frac{1}{|W|} \int_{T_\vv} \frac{\prod_i (1-\hbar_1\hbar_2 r_i)}{\prod_j (1-s_j) } \Delta(x) d x\, ,
\end{equation}

where $r_i=r_i(x)$ and $s_j=s_j(x,t)$ are characters for $T_\vv$ and $T_\vv \times T$, respectively, $\Delta(x)$ is the Weyl factor for $G_\vv$ and the integration is over the compact real form of $T_\vv$ (see \S~\ref{subsec:comparisontheorem} for a more detailed explanation).

Integral formulas of similar flavours have already appeared under different names, both in the mathematical literature (Jeffrey-Kirwan integral/residue formula for GIT quotients --- \cite{JeKi}) and in the physical literature (integral formula for Nekrasov partition function --- \cite{NaYo1},\cite{NaYo2} --- proven, for example, in Appendix A in \cite{FeMu}). We could say that this is not a coincidence, in fact recognising the right-hand side of~\eqref{git=int} in the known example of the Jordan quiver (Nekrasov partition function) as the Euler characteristic of the representation homology was one of the motivations of this project.

For what concerns other tautological sheaves $\mathcal{V}_\lambda$ an equality of the same flavour of~\eqref{flatstr} is true only for large enough $\lambda$, where the definition of largeness depends on the quiver, the dimension vectors $\vv,\ww$ and, perhaps more importantly, also on the GIT parameter $\chi$ (see \S~\ref{subsec:int2}):

\begin{Thm*}[\ref{thm:2} in \S~\ref{subsec:int2}]
Let $\vv,\ww$ be dimension vectors for which the moment map is flat, and $\chi$ a character for which $\M^\chi(Q,\vv,\ww)$ is smooth. For $\lambda$ large enough (Definition~\ref{def:large}) we have
\begin{equation}
\label{flatlamb}
p_*([\mathcal{V}_\lambda]) = [ \HH^0(\M^\chi(Q,\vv,\ww),\mathcal{V}_\lambda)] = \chi_T^{\lambda^*} (A,\vv,\ww) \in \KK_T(\M^0(Q,\vv,\ww))\, .
\end{equation}
\end{Thm*}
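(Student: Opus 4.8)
The plan is to establish the two equalities in \eqref{flatlamb} separately: the first, $p_*([\mathcal{V}_\lambda]) = [\HH^0(\M^\chi,\mathcal{V}_\lambda)]$, is a cohomological vanishing statement on the resolution, while the second is a translation of the flatness hypothesis into the language of representation homology via \ref{thm:drep=koszul} and \ref{thm:0}. First I would reduce the first equality to a vanishing theorem. Since $\Mo(Q,\vv,\ww)=\mu^{-1}(0)\sslash G$ is affine, the Leray spectral sequence for $p$ collapses and the derived pushforward computes sheaf cohomology, $R^ip_*\mathcal{V}_\lambda \cong \HH^i(\M^\chi,\mathcal{V}_\lambda)$ as $\O(\Mo)$-modules, so that
\[
p_*([\mathcal{V}_\lambda]) = \sum_{i\ge 0}(-1)^i[\HH^i(\M^\chi,\mathcal{V}_\lambda)] \in \KK_T(\Mo).
\]
It therefore suffices to prove $\HH^i(\M^\chi,\mathcal{V}_\lambda)=0$ for $i>0$ once $\lambda$ is large. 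The tautological sheaves are built by descent from $G$-equivariant bundles on the semistable locus $\mu^{-1}(0)^{\chi\text{-ss}}$, and the GIT polarisation realises $\M^\chi$ as a relative $\Proj$ over $\Mo$, with $\chi$ furnishing a $p$-ample line bundle $\mathcal{L}_\chi$. The point of Definition~\ref{def:large} is exactly that, for $\lambda$ large, $\mathcal{V}_\lambda$ is a twist of an ample bundle by an effective remainder, so that relative Serre vanishing applies; this is the step I would carry out in detail.

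Next I would identify the surviving term. By the standard description of global sections of a tautological sheaf on a GIT quotient,
\[
\HH^0(\M^\chi,\mathcal{V}_\lambda) = \big(\O(\mu^{-1}(0)^{\chi\text{-ss}})\otimes V_\lambda\big)^G,
\]
and I expect the largeness of $\lambda$ to again guarantee that restriction from the full zero fibre induces an isomorphism $(\O(\mu^{-1}(0))\otimes V_\lambda)^G \xrightarrow{\ \sim\ } (\O(\mu^{-1}(0)^{\chi\text{-ss}})\otimes V_\lambda)^G$, because the local cohomology of $\mathcal{V}_\lambda$ supported on the unstable locus vanishes in the relevant degrees when $\lambda$ is positive enough against $\chi$. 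I would then rewrite the invariants as a $\Hom$-space, using $V_\lambda^*\cong V_{\lambda^*}$ together with the tensor–hom adjunction in the category of $G$-modules:
\[
\big(\O(\mu^{-1}(0))\otimes V_\lambda\big)^G = \Hom_G\big(V_{\lambda^*},\,\O(\mu^{-1}(0))\big) = \Hom_G\big(V_{\lambda^*},\,\HH_0(A,\vv,\ww)\big),
\]
which explains the appearance of the dual weight $\lambda^*$ on the right-hand side of \eqref{flatlamb}.

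Finally I would invoke flatness. By \ref{thm:drep=koszul} the representation homology is computed by the Koszul complex on the moment map, and by \ref{thm:0} flatness forces $\HH_i(A,\vv,\ww)=0$ for $i>0$, so the higher isotypical components vanish and
\[
\chi_T^{\lambda^*}(A,\vv,\ww) = \big[\Hom_G(V_{\lambda^*},\HH_0(A,\vv,\ww))\big] \in \KK_T(\Mo).
\]
Chaining this with the two previous identifications yields \eqref{flatlamb}. This also exhibits the present statement as the higher-weight analogue of \ref{thm:flatstr}, which is the case $\lambda$ trivial, where the largeness hypothesis is vacuous.

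I expect the main obstacle to be the largeness step: making Definition~\ref{def:large} do double duty, simultaneously forcing the relative higher cohomology $\HH^{>0}(\M^\chi,\mathcal{V}_\lambda)$ to vanish and the restriction to the semistable locus to be an isomorphism. Both reduce to codimension and positivity estimates for the unstable locus inside $\mu^{-1}(0)$ that must be uniform enough to be captured by a single explicit bound on $\lambda$ depending on $Q$, $\vv$, $\ww$ and $\chi$. A secondary but persistent difficulty is bookkeeping: every identification above must be made $T=T_\ww\times T_\hbar$-equivariantly, so that the final statement is an equality of classes in $\KK_T(\Mo)$ rather than merely of the underlying $\O(\Mo)$-modules.
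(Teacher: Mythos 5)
Your proposal is correct and follows essentially the same route as the paper's proof: push forward to the affine base as an Euler characteristic, Serre vanishing for a large twist by $\O_{\M^\chi}(1)=\kappa_T([\C_\chi])$, the restriction isomorphism $\Gamma\big(\mu^{-1}(0),\underline{V_\lambda}\big)^G \xrightarrow{\sim} \Gamma\big(\mu^{-1}(0)^{\chi\st},\underline{V_\lambda}\big)^G$ (for which the paper cites Lemma 3 in Appendix A of \cite{AgFrOk}), the dual-partition identification $(\O(\mu^{-1}(0))\otimes V_\lambda)^G = \HH_0(A,\vv,\ww)^G_{\lambda^*}$, and finally flatness via Theorem~\ref{thm:0} to kill the higher representation homology. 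The ``main obstacle'' you anticipate is dissolved in the paper by fiat: Definition~\ref{def:large} declares $\lambda$ large precisely when both the Serre vanishing~\eqref{Serre} and the restriction isomorphism~\eqref{restri} hold for the shift $\widetilde{\lambda}=\lambda+m\underline{\theta}$, so no uniform codimension or positivity estimates are ever carried out.
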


Once again by taking the Hilbert-Poincaré series of~\eqref{flatlamb} we obtain a second integral formula for tautological sheaves on the GIT quotient:

\begin{equation}
\label{git=int2}
\ch_T(\chi_T(\M^\chi,\mathcal{V}_\lambda) ) = \ch_T(\HH^0(\M^\chi, \mathcal{V}_\lambda))  = \frac{1}{|W|} \int_{T_\vv} \frac{\prod_i (1-\hbar_1\hbar_2 r_i)}{\prod_j (1-s_j) } f_\lambda(x) \Delta(x) d x\, ,
\end{equation}
where $f_\lambda(x) = \ch_{T_\vv}(V_\lambda)$ is a product of Schur polynomials.

\subsection{Layout of the paper}
\label{subsec:layout}

In \S~\ref{sec:derivedrepresentationschemes} we introduce the general theory of (derived) representation schemes of an algebra. First we recall the theory of representation schemes with some examples, in particular the linear space of representations of a quiver as a representation scheme for its path algebra. Then we recall the derived version introduced by \cite{BeKhRa} and \cite{BeFeRa}. We introduce a more general way to take invariant subfunctors and an equivariant version of derived representation schemes for an action of an algebraic torus which is  useful for our purposes. We decompose the representation homology in isotypical components and define new invariants in the $\KK$-theory of the classical character scheme.

In \S~\ref{sec:dgrepschemesfornakajma} we recall the construction of Nakajima quiver varieties and we show how to view the affine Nakajima variety $\Mo$ as a partial character scheme (a quotient of a representation scheme) for the algebra $\smash{A:= \C \overline{Q^\fr}\slash \mathcal{I}_\mu}$. We construct the derived scheme associated to it and we use the invariants defined in \S~\ref{sec:derivedrepresentationschemes} to decompose the representation homology into classes in the $\KK$-theory of $\Mo$.
In \S~\ref{subsec:cofres} we construct an explicit cofibrant resolution $\smash{A_\cof \acfib A}$ that gives us a concrete model for the derived representation scheme as the (spectrum of the) Koszul complex on the moment map. Therefore we recall some classical properties of the Koszul complex and commutative complete intersections.

In \S~\ref{sec:comparisonresults} we explain the main results of this paper. First we observe that, using the model found in \S~\ref{subsec:cofres}, the derived representation scheme has vanishing higher homologies if and only if the moment map is flat, which is a combinatorical condition on the dimension vectors of the quiver (\cite{Cr}). We recall the definition of tautological sheaves on GIT quotients by the Kirwan map and prove results that compare them with the isotypical components of the representation homology (\eqref{flatstr} and~\eqref{flatlamb}). In particular we obtain some interesting integral formulas (\eqref{git=int} and~\eqref{git=int2}).

In \S~\ref{sec:examples} we show some concrete examples, such as the quiver $A_1$ for which Nakajima varieties are cotangent spaces of Grassmannians, the Jordan quiver for which we obtain framed moduli space of torsion free sheaves on $\PP^2$, and the quiver $A_{n-1}$ with some special dimension vectors for which we obtain the symplectic dual $(\TT^*\PP^{n-1})\dual$, and compute some of the integral formulas that we have proved before.

In Appendix~\ref{app:A} we construct a model structure on equivariant dg-algebras that we need in \S~\ref{subsec:Tenrich}, and in Appendix~\ref{app:B} we recall the theory of irreducible representations for a product of general linear groups as multipartitions, and set some notation that we need in \S~\ref{subsec:int2}.

\begin{Notation} Throughout the paper we denote categories by the standard monospace font: $\Sets$, $\Grp$, $\Vect_k$, $\Alg_k$, \dots The notation used is often both standard and self-explanatory, and when this is not the case we usually recall it in the main body of the paper.
\end{Notation}

\subsection*{Acknowledgements}

I want to express my gratitude to my advisor Giovanni Felder, who introduced me to this subject a couple of years ago and proved numerous times to be a patient, wise and resourceful guide. I also want to mention other people with whom we shared our ideas and contributed with useful comments, in particular Yuri Berest during his brief stay in Zurich, Gabriele Rembado, Matteo Felder and Xiaomeng Xu.



\section{Derived representation schemes of an algebra}
\label{sec:derivedrepresentationschemes}
The family of schemes of finite dimensional representations $\smash{\{\Rep_n(A)\}}_{n\geq 1}$ of an algebra $A$ has been object of study for many years (see for example the early work of Procesi, \cite{Pr}). With the development of noncommutative geometry, they have been seen in a new light when Kontsevich and Rosenberg (\cite{KoRo}) proposed the following principle:
\begin{center}
``\emph{Any noncommutative structure of some kind on A should give an analogous commutative structure on all the representation schemes $\Rep_n(A)$, $n \geq 1$}''.
\end{center} 
This principle seems to work well for (formally) smooth algebras, for which the representation schemes are smooth, but fails in general. The solution proposed in \cite{BeKhRa} is to find a smoothening of representation schemes by extending representation schemes to differential graded algebras, and using the general machinery of model categories to derive them. The purpose of this section is to recall in main details the construction of this derived version of representation schemes from \cite{BeKhRa} and \cite{BeFeRa}, and describe some generalisations that is useful to our purposes.

\subsection{Classical representation schemes}
\label{subsec:classicalrep}

Let $k $ be an algebraically closed field of characteristic zero (later we fix $k = \C$). Let $A \in \Alg_k$ be a unital, associative algebra and $V \in \Vect_k$ a finite dimensional vector space. We consider the functor on unital commutative algebras:

\begin{equation}
\label{repfunctor}
\begin{aligned}
\Rep_V(A): \,\,&\cAlg_k (=\Aff_k^\op) \to \Sets\\
&B \longmapsto \Hom_{\Alg_k}\big(A,\End(V) \otimes_k B\big)\, .
\end{aligned}
\end{equation}
This functor is (co)-representable, by the commutative algebra $\smash{A_V:=\big( \sqrt[V]{A} \big)_{\nn}}$. The two functors $\smash{\sqrt[V]{-}}$ and $\smash{(-)_{\nn}}$ are, respectively, the matrix reduction functor and the abelianisation functor, which are left adjoints to the followings:
\begin{equation}
\label{adjunctions}
\adjunct{\Alg_k}{\Alg_k}{\sqrt[V]{-}}{\End(V)\otimes_k(-)}\, ,
\qquad \quad
\adjunct{\Alg_k}{\cAlg_k}{(-)_\nn}{U}\, .
\end{equation}
Explicit formulas for them are $\smash{\sqrt[V]{A} = \big( \End(V) \ast_k A  \big)^{\End(V)}}$ and $\smash{(C)_{\nn} =C\slash \langle[C,C]\rangle}$, where $\smash{\langle [C,C]\rangle}$ is the $2$-sided ideal generated by the commutators. By combining the two adjunctions in \eqref{adjunctions} we get an adjunction for the representation functor:
\begin{equation}
\adjunct{\Alg_k}{\cAlg_k}{(-)_V}{\End(V)\otimes_k(-)} \, ,
\end{equation}
so that the commutative algebra $A_V$ is uniquely defined by the natural isomorphisms:
\begin{equation}
\label{natiso}
\Hom_{\cAlg_k}(A_V, B) \cong \Hom_{\Alg_k}(A,\End(V)\otimes_k B), \quad \forall B \in \cAlg_k\, .
\end{equation}
\begin{Defn} The affine scheme associated to $A_V \in \cAlg_k$ is the \textit{representation scheme} $\Rep_V(A)= \Spec(A_V) \in \Aff_k$ (strictly speaking we identify it with its functor of points as we originally defined it $\Rep_V(A) \in \O(\Aff_k^\op,\Sets)$ in \eqref{repfunctor}). We recover $A_V = \O(\Rep_V(A))$ as the algebra of functions on the representation scheme.
\end{Defn}

We can assume that $V=k^n$ and write simply $\Rep_n(A)=\Spec(A_n)$ instead of $\Rep_V(A)=\Spec(A_V)$. Let us show some examples:

\begin{Exs}
\label{Exs1}
\begin{enumerate}
\item [(0)] If $A \in \cAlg_k \subset \Alg_k$ is a commutative algebra then clearly from \eqref{natiso}:
\[
A_1 = A  \quad \leftrightarrow \quad \Rep_1(A)=\Spec(A)\, .
\] 
\item The free algebra in $m$ generators $A=F_m=k\langle x_1,\dots ,x_m \rangle$ has no relations and therefore $\Rep_n(F_m)$ is the scheme of $m$-tuples of $n\times n$ matrices:
\[
 \Rep_n(F_m)=M_{n\times n}(k)^{ m}\, .
\]
\item The polynomial algebra $A=k[x_1,\dots, x_m]$ can be expressed as the free algebra in $m$ generators modulo the ideal generated by all commutators $[x_i,x_j]$ and therefore its representation scheme is the closed subscheme of $m$-tuples of $n\times n$ matrices that pairwise commute:
\[
 \Rep_n(A) = C(m,n) :=  \big\{(X_1,\dots, X_m) \in M_{n\times n}(k)^{ m} \,| \, [X_i,X_j] =0 \, \forall i,j \big\}  \subset \Rep_n(F_m)\, .
\]
\item The algebra of dual numbers $A=k[x]/(x^2)$ gives the scheme of square-zero matrices:
\[
\Rep_n(A)= \big\{ X \in M_{n\times n}(k) \, | \, X^2 =0 \big\}\, .
\]
\item The algebra of differential operators on the affine line $A=\mathrm{Diff}(\mathbb{A}_k^1) =k\langle x,d\rangle\slash ([d,x] =1)$ has no representation because if $X,D \in M_{n\times n}(k)$ are matrices satisfying $[D,X]=\mathbb{1}_n$, then taking traces we would get $0=n$, which is absurd:
\[
\Rep_n\big(\mathrm{Diff}(\mathbb{A}_k^1)\big) = \emptyset\, .
\]
\item The algebra of Laurent polynomials in $m$ variables $A=k [t_1^{\pm 1} , \dots, t_m^{\pm 1}]$ is similar to the example of commuting matrices, except that now the matrices are required to be invertible:
\[
\Rep_n(A) =\big\{(X_1,\dots, X_m) \in \GL_n(k)^{ m} \,| \, [X_i,X_j] =0 \, \forall i,j \big\} \, .
\]
\item 
\label{ex5} 
More generally writing any finitely generated algebra as a free algebra modulo some relations 
\[
A = F_m \slash \langle  r_1, \dots , r_s\rangle \,,\qquad r_1,\dots ,r_s \in F_m =k\langle x_1, \dots ,x_m \rangle\, ,
\]
then its representation scheme is identified with the closed subscheme 
\[
\Rep_n(A) = \big\{(X_1,\dots, X_m) \in M_{n\times n}(k)^m \, | \, r_i(X_1,\dots,X_m) = 0 \, \forall i  \big\} \subset \Rep_n(F_m)
\]
of $m$-tuples of $n\times n$ matrices defined by the equations $r_1,\dots ,r_s$.
\end{enumerate}
\end{Exs}
Another fundamental example is that of path algebras of (finite) quivers. These algebras come with an additional structure of algebras over the finite dimensional algebras of their empty paths, which is crucial when considering their representations, therefore we need to consider a \emph{relative} version of representation schemes. Formally we fix an algebra $S\in \Alg_k$ and we consider the under category $S \downarrow\Alg_k$ (also denoted by $\Alg_S$ following the notation of \cite{BeFeRa} and \cite{BeKhRa}) which is the category of algebras $A \in \Alg_k$ together with a fixed morphism $S \to A$. We also fix a representation $\rho:S \to \End(V)$. 

\begin{Notation} Sometimes when we want to remark that $A$ comes with a map from $S$ we denote this object as $S\backslash A \in \Alg_S$. However, when there is no risk of confusion, we just use $A$.
\end{Notation}

With these ingredients it is natural to consider only those representations $A \to \End(V)$ that agree with $\rho$ on $S$. In terms of functor of points this corresponds to 
\begin{equation}
\label{relrep}
\begin{aligned}
\Rep_V(A): \,\,&\cAlg_k \to \Sets\\
&B \longmapsto \Hom_{\Alg_S}\big(A,\End(V) \otimes_k B\big)\, .
\end{aligned}
\end{equation}
This functor is also (co)representable, by the commutative algebra $A_V$ defined as before except for $\ast_k$ substituted by $\ast_S$, the coproduct in $\Alg_S$. Letting $A$ vary we obtain a relative version of the representation functor $(-)_V$, and a similar adjunction
\begin{equation}
\label{reladj}
\adjunct{\Alg_S}{\cAlg_k}{(-)_V}{\End(V)\otimes_k(-)}\, .
\end{equation}
\begin{Ex}[Path algebra of a quiver] 
\label{ex:pathalg}
Let $Q$ be a finite quiver and $A =\C Q \in \Alg_\C$ its path algebra over the complex numbers. What follows works well for any field $k$ of characteristic zero but later we are interested only in $k=\C$. We recall that the path algebra is the free vector space on the admissible paths in the quiver, with product given by concatenation of paths. It has a set of orthogonal idempotents $\{e_i \}_{i \in Q_0} \subset A$:
\[
e_i  e_j = \delta_{ij} e_j\, ,
\]
 which are the empty paths on the vertices, and their sum is the unit of the algebra: $\smash{\sum_{i\in Q_0} e_i =1 \in A}$. We can then consider the subalgebra generated by these idempotents 
 \[
 S=\langle e_i \rangle_{i\in Q_0} = \Span_{\C}\{ e_i\}_{i\in Q_0}\, ,
 \]
with the natural inclusion $\iota :S \to A$. We now fix a dimension vector $\vv \in \N^{Q_0}$ and we consider the linear space of representations of the quiver $Q$ with the complex vector space $\C^{v_i}$ placed at the vertex $i \in Q_0$:
\begin{equation}
\label{linspace}
L(Q,\vv) := \bigoplus_{\gamma \in Q_1} \Hom_\C\big(\C^{v_{s(\gamma)}}, \C^{v_{t(\gamma)}}\big)\, .
\end{equation} 
where $s,t:Q_1 \to Q_0$ are the source and target maps of the quiver. From the algebraic point of view we fix the following representation of $S$ in the vector space $\C^\vv:= \oplus_i \C^{v_i}$:
\begin{equation}
\label{rhov}
\begin{aligned}
\rho=\rho_\vv : \,&S \to \oplus_i \End_\C(\C^{v_i}) \subset \End_\C(\C^\vv)\\
&e_i \longmapsto E_i := 0 \oplus \dots \oplus \underbrace{\mathbb{1}_{\C^{v_i}}}_{\text{i-th factor}}\oplus  \dots \oplus 0\, .
\end{aligned}
\end{equation}
\begin{Prop}
\label{connection2}
The linear space of representations of the quiver $Q$ with fixed dimension vector $\vv$ is isomorphic to the (relative) representation scheme of its path algebra:
\begin{equation}
\label{lqv=rep}
L(Q,\vv) \cong \Rep_{\C^\vv}( \C Q)\, .
\end{equation}
\end{Prop}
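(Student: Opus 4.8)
The plan is to compare the functors of points of the two sides and conclude by Yoneda. The left-hand side $L(Q,\vv)$ is a finite-dimensional vector space, hence an affine scheme whose $B$-points are $L(Q,\vv)\otimes_\C B$ for every $B \in \cAlg_\C$; concretely it is represented by $\Sym(L(Q,\vv)^*)$. The right-hand side is, by the relative representation functor \eqref{relrep}, the scheme representing
\[
B \longmapsto \Hom_{\Alg_S}\big(\C Q, \End(\C^\vv)\otimes_\C B\big)\, .
\]
So it suffices to produce a bijection between this $\Hom$-set and $L(Q,\vv)\otimes_\C B$, natural in $B$.

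First I would use the description of the path algebra as the tensor algebra of the arrow bimodule over $S$. Writing $M = \bigoplus_{\gamma\in Q_1}\C\gamma$ for the $S$-bimodule spanned by the arrows (so that $e_{t(\gamma)}\gamma e_{s(\gamma)} = \gamma$), one has $\C Q = T_S(M) = S \oplus M \oplus (M\otimes_S M)\oplus\cdots$, and the universal property of the tensor algebra identifies $S$-algebra homomorphisms out of $\C Q$ with $S$-bimodule homomorphisms out of $M$. Since any $\phi \in \Hom_{\Alg_S}(\C Q, \End(\C^\vv)\otimes_\C B)$ is required to restrict to $\rho_\vv\otimes 1$ on $S$, so that $\phi(e_i)=E_i\otimes 1$, it is determined freely by the images $\phi(\gamma)\in \End(\C^\vv)\otimes_\C B$ of the arrows, subject only to the $S$-bimodule constraint.

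The key computation is then to unwind that constraint. From $e_{t(\gamma)}\gamma e_{s(\gamma)}=\gamma$ we obtain
\[
\phi(\gamma)=(E_{t(\gamma)}\otimes 1)\,\phi(\gamma)\,(E_{s(\gamma)}\otimes 1)\, ,
\]
so $\phi(\gamma)$ lies in the block $(E_{t(\gamma)}\otimes 1)(\End(\C^\vv)\otimes_\C B)(E_{s(\gamma)}\otimes 1)$, which under the decomposition $\C^\vv=\oplus_i\C^{v_i}$ is exactly $\Hom_\C(\C^{v_{s(\gamma)}},\C^{v_{t(\gamma)}})\otimes_\C B$. Collecting the finitely many arrows yields a bijection
\[
\Hom_{\Alg_S}\big(\C Q,\End(\C^\vv)\otimes_\C B\big)\;\cong\;\Big(\bigoplus_{\gamma\in Q_1}\Hom_\C(\C^{v_{s(\gamma)}},\C^{v_{t(\gamma)}})\Big)\otimes_\C B = L(Q,\vv)\otimes_\C B\, ,
\]
manifestly natural in $B$. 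By the Yoneda lemma the representing objects agree, which is the claimed isomorphism \eqref{lqv=rep}.

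The only real content is the middle step, namely identifying $S$-algebra maps out of $\C Q$ with compatible families of block matrices indexed by arrows, so the main thing to get right is the universal property of $\C Q$ as a tensor algebra over $S$ together with the bookkeeping of the idempotents $E_i$; once that is in place, the rest is the formal passage through Yoneda. I expect no genuine obstacle beyond keeping the source/target convention $e_{t(\gamma)}\gamma e_{s(\gamma)}=\gamma$ consistent so that the block ends up in $\Hom_\C(\C^{v_{s(\gamma)}},\C^{v_{t(\gamma)}})$ rather than its transpose.
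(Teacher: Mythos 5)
Your proposal is correct and is essentially the paper's own proof: both view $\C Q$ as the tensor algebra $T_S M$ of the arrow bimodule over the idempotent subalgebra $S$, and use the universal property of $T_S M$ together with the block decomposition forced by the idempotents $E_i$ to identify representations with $L(Q,\vv)$. The only difference is that you carry out the identification for arbitrary $B$-points and conclude via Yoneda, whereas the paper checks the bijection only at $B=\C$ and leaves the functorial enhancement implicit --- a slight gain in completeness, not a different route.
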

\begin{proof} 
Let us consider the complex vector space with basis given by the set of arrows of the quiver $M:=\Span_\C\{ x_\gamma\}_{\gamma \in Q_1}$. It has the structure of an $S$-bimodule, and its tensor algebra is the path algebra of the quiver:
\[
A=\C Q = T_S M := S \oplus M \oplus (M\otimes_S M) \oplus \dots \, .
\]
For a dimension vector $\vv \in \N^{Q_0}$ we consider the graded vector space $\C^\vv = \oplus_i \C^{v_i}$, whose endomorphism algebra $\End_\C(\C^\vv)$ is an $S$-bimodule via the map \eqref{rhov}. By the universal property of the tensor algebra, giving a representation $T_SM \to \End_\C(\C^\vv)$ that agrees with $\rho$ on $S$, is equivalent to give a $S$-bimodule map $M \to \End_\C(\C^\vv)$:
\[
\Hom_{\Alg_S}(A,\End_\C(\C^\vv)) \cong \Hom_{S-\Bimod}(M,\End_\C(\C^\vv) \cong \bigoplus_{\gamma \in Q_1} \Hom_\C \big( \C^{v_{s(\gamma)}} , \C^{v_{t(\gamma)}} \big)  = L(Q,\vv)\, .
\]
\end{proof}
\end{Ex}


\subsection{Derived representation schemes}
\label{subsec:derivedrep}

As already anticipated in the introduction of this Section, the noncommutative geometry principle of transferring a geometric property on an algebra $A$ (e.g. noncommutative complete intersection, Cohen-Macaulay, etc.) on the corresponding commutative one on $\Rep_V(A)$ might fail when $A$ is not a (formally) smooth algebra. This seems to be related to the fact that the functor $\Rep_V(-)$ is not exact.

We discuss the following derived version of representation schemes firstly introduced in \cite{BeKhRa}. The idea is to ``resolve'' the singularities of the representation schemes by using the tools of homological algebra, in the sense of Quillen's derived functors on model categories.

We enlarge the category of algebras to the one of differential graded algebras $\DGA_k$ (in our conventions differentials have always degree $-1$), and as before we consider the under category $\DGA_S:= S\downarrow \DGA_k$ of dg-algebras $A$ with a fixed morphism $S\to A$.

We also fix a differential graded vector space $V \in \dgVect_k$ of finite total dimension, and denote by $\END(V) \in \DGA_k$ the differential graded algebra of endomorphisms, with differential
\begin{equation}
d f = d_V \circ f - (-1)^i f \circ d_V, \quad f \in \END(V)_i\, .
\end{equation}
Moreover we need to fix a representation of $S$ in $V$, that is a dga morphism $\rho : S \to \END(V)$, which makes $\END(V)$ an object of $\DGA_S$. With these ingredients we can define a differential graded version of the representation functor for $A \in \DGA_S$ as the functor from commutative dg-algebras:
\begin{equation}
\label{dgrepfunctor}
\begin{aligned}
\Rep_V(A): \,\,&\cDGA_k \to \Sets \\
&B \longmapsto \Hom_{\DGA_S}(A,\END(V) \otimes_k B)\, .
\end{aligned}
\end{equation}
\begin{Remark}
We use the same notation as in the non-graded case because in the particular case of $S,A,V$ being concentrated in degree zero we recover the same functor as before (when restricted to $\Alg_k \subset \DGA_k$).
\end{Remark}
This functor is also (co)-representable, by the object $\smash{A_V:= \big( \sqrt[V]{A} \big)_\nn}$ constructed in the same way as before, with 
\begin{equation}
\sqrt[V]{ A } = \left( \END(V) \ast_S A \right)^{\END(V)} \, ,
\end{equation}
where $\ast_S$ is the free product over $S$, the categorical coproduct in $\DGA_S$. As before we obtain a pair of adjoint functors
\begin{equation}
\label{quillenpair}
\adjunct{\DGA_S}{\cDGA_k}{(-)_V}{\END(V)\otimes_k (-)}\, .
\end{equation}
These categories have model structures for which this adjunction is a Quillen adjunction, and therefore produces a total right-derived functor $\RR \big(\END(V)\otimes_k(-)\big)$, but more importantly a left-derived functor $\L(-)_V$ that we use to define the derived representation scheme.

We consider on $\DGA_k$ and $\cDGA_k$ the so-called projective model structures for which weak equivalences are quasi-isomorphisms of complexes and fibrations are degree-wise surjective maps (Theorem 4 in \cite{BeFeRa}). It is useful for later purposes to consider also the categories $\DGA_k^+$ and $\cDGA_k^+$, which are the categories of non-negatively graded differential graded and commutative differential graded algebras, respectively, and with their projective model structures with the only difference that now fibrations are degree-wise surjective maps in all (strictly) \emph{positive} degrees. All these categories are fibrant (every object is fibrant), with initial object $k$ and final object $0$.

The category $\DGA_S$ is an example of an under category (category in which objects are objects of the original category coming with a fixed morphism from the object $S$ in this case). As such it comes with a forgetful functor $\DGA_S \to \DGA_k$ and the model structure on $\DGA_S$ is the one in which weak-equivalences, fibrations and cofibrations are exactly the maps which are sent to weak-equivalences, fibrations and cofibrations via the forgetful functor. Clearly also the under category $\DGA_S$ is fibrant, with final object still $0$ (viewed as an object of $\DGA_S$ via the unique map $S\to 0$), and initial object $S$ (viewed as an object of $\DGA_S$ via the identity map $\id_S :S \to S$).

For a model category $\CC$, we denote by $\Ho(\CC)$ its homotopy category and by $\gamma : \CC \to \Ho(\CC)$ the canonical functor.

\begin{Thm}[Theorem 7 in \cite{BeFeRa}] 
\label{thm:L(A)_V}
\begin{itemize}
\item [(i)] The pair of functors in \eqref{quillenpair} form a Quillen pair.
\item [(ii)] The representation functor $(-)_V$ has a total left derived functor given by
\begin{equation}
\label{leftder}
\begin{aligned}
\L (-)_V : \,\,&\Ho(\DGA_S) \to \Ho(\cDGA_k) \\
& \begin{cases} A \longmapsto  \big( A_\cof \big)_V\\
\gamma f \longmapsto \gamma (\tilde{f})_V
\end{cases} 
\end{aligned}
\end{equation}
where $\smash{A_\cof \acfib A}$ is a cofibrant replacement in $\DGA_S$, and for a morphism $f:A \to B$, the morphism $\smash{\tilde{f}: A_\cof \to B_\cof }$ is a lifting of $f$ between the cofibrant replacements.
\item [(iii)] For any $A \in \DGA_S$ and any $B \in \cDGA_k$ there is a canonical isomorphism:
\begin{equation} 
\label{caniso}
\Hom_{\Ho(\cDGA_k)}( \L (A)_V, B) \cong \Hom_{\Ho(\DGA_S)} (A, \END(V)\otimes_k B) \, .
\end{equation}
\end{itemize}
\end{Thm}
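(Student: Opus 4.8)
The plan is to recognise this as an instance of Quillen's derived-functor formalism applied to the adjunction \eqref{quillenpair}, so that (ii) and (iii) follow formally once (i) is established.

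For (i), I would verify that the right adjoint $\END(V)\otimes_k(-)$ is a \emph{right} Quillen functor, i.e.\ that it preserves fibrations and acyclic fibrations. Recall that the model structure on $\DGA_S$ is created by the forgetful functor $\DGA_S \to \DGA_k$, so a map in $\DGA_S$ is a fibration (resp.\ weak equivalence) precisely when its underlying map in $\DGA_k$ is a degree-wise surjection (resp.\ quasi-isomorphism). Hence it suffices to check that $\END(V)\otimes_k(-)$, viewed as a functor $\cDGA_k \to \DGA_k$, sends degree-wise surjections to degree-wise surjections and quasi-isomorphisms to quasi-isomorphisms. Both are immediate from the fact that $k$ is a field: tensoring over $k$ is exact, so $\id_{\END(V)}\otimes f$ is a surjection (resp.\ a quasi-isomorphism) whenever $f$ is, the only point to note being that the $S$-algebra structure on $\END(V)\otimes_k B$ comes from $\rho$ composed with $b \mapsto 1\otimes b$, so the functor indeed lands in $\DGA_S$. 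Preservation of fibrations together with preservation of weak equivalences gives preservation of acyclic fibrations, so the pair is Quillen.

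For (ii), I would invoke Ken Brown's lemma: a left Quillen functor preserves weak equivalences between cofibrant objects. Consequently $(-)_V$ descends to the homotopy category via the formula $\L(A)_V = (A_\cof)_V$, and independence of the chosen cofibrant replacement follows. Functoriality on morphisms is the usual lifting argument: any $f\colon A \to B$ lifts to a map $\tilde f\colon A_\cof \to B_\cof$ that is unique up to left homotopy, and $(-)_V$ carries left-homotopic maps to left-homotopic maps, so $\gamma(\tilde f)_V$ is well defined.

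Finally, (iii) is the derived adjunction attached to any Quillen pair: $\L(-)_V$ is left adjoint to the total right derived functor $\RR\big(\END(V)\otimes_k(-)\big)$, giving $\Hom_{\Ho(\cDGA_k)}(\L(A)_V, B) \cong \Hom_{\Ho(\DGA_S)}\big(A, \RR(\END(V)\otimes_k B)\big)$. The step that upgrades this to the stated isomorphism \eqref{caniso} is the observation that every object of $\cDGA_k$ is fibrant, so $B$ is its own fibrant replacement, while by the exactness of $\otimes_k$ the functor $\END(V)\otimes_k(-)$ already preserves all weak equivalences; hence $\RR(\END(V)\otimes_k B)\cong \END(V)\otimes_k B$ in $\Ho(\DGA_S)$. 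The only genuine content is the right-Quillen verification in (i); the mild subtlety to keep in view is precisely this identification of the derived right adjoint with the underived functor, which is exactly what fibrancy of all objects and flatness over $k$ guarantee.
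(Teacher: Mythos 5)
Your proof is correct and follows the standard route: the paper itself does not prove this statement but quotes it as Theorem~7 of \cite{BeFeRa}, and the argument there is essentially the one you give (check that $\END(V)\otimes_k(-)$ preserves degreewise surjections and quasi-isomorphisms using exactness of $\otimes_k$ over a field, then apply Quillen's derived-functor machinery and Brown's lemma, with fibrancy of every object of $\cDGA_k$ identifying $\RR(\END(V)\otimes_k B)$ with the underived functor). Nothing is missing; your handling of the upgrade from the derived adjunction to the canonical isomorphism \eqref{caniso} is exactly the relevant point.
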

\begin{Defn} 
\label{def1}
For $S \in \Alg_k$ concentrated in degree $0$, the following composite functor
\begin{equation}
\label{drep}
\begin{aligned}
&\Alg_S  \to \Ho(\DGA_S) \xrightarrow[]{\L(-)_V}    \Ho(\cDGA_k)  \\
&\,\,\, A\,\, \mylongmapsto{\qquad\qquad \quad \qquad}\,\,\,\,\, \L(A)_V
\end{aligned}
\end{equation}
is called \emph{derived representation functor}. The homology of the (homotopy class of the) commutative differential graded algebra $\L(A)_V \in \Ho(\cDGA_k)$ depends only on $A\in \Alg_S$ and $V$. It is called the \textit{representation homology} of $ A$ with coefficients in $V$:
\begin{equation}
\HH_\bullet(A,V) := \HH_\bullet(\L(A)_V) \,.
 \end{equation}
\end{Defn}
\begin{Remark}
\label{rmk:zerohom}
By its definition, the zero-th homology recovers the classical representation scheme (see Theorem 9 in \cite{BeFeRa}):
\begin{equation}
 \HH_0 \big( A,V ) \cong A_V= \O\big( \Rep_V(A)\big)\, .
\end{equation}
\end{Remark}
As we anticipated before, we are interested in a slightly different version of this story: if we start from a vector space $V$ concentrated in degree $0$ and $S\in \Alg_k$ then the previous pair \eqref{quillenpair} restricts to a pair of functors
\begin{equation}
\label{quillenpair0}
\adjunct{\DGA_S^+}{\cDGA_k^+}{(-)_V}{\END(V)\otimes_k (-)}\, ,
\end{equation}
which is still a Quillen pair, and the analogous result of Theorem~\ref{thm:L(A)_V} holds. We give a second definition of:
\begin{Defn}
\label{def2}
The \emph{derived representation functor} is the following functor:
\begin{equation}
\label{drep0}
\Alg_S \to \Ho(\DGA_S^+) \xrightarrow[]{\L(-)_V} \Ho(\cDGA_k^+)\, .
\end{equation}
The \emph{representation homology} of the relative algebra $A \in \Alg_S$ is the homology of $\L(A)_V \in \Ho(\cDGA_k^+)$.
\end{Defn}

\begin{Remark} 
\label{rem:+}
Definition~\ref{def1} and \eqref{def2} are not really different. In fact, there is an adjunction between the categories $\DGA_S^+$ and $\DGA_S$ 
\begin{equation}
\label{iotatrunc}
\adjunct{\DGA_S^+ }{\DGA_S}{\iota}{\tau} \, ,
\end{equation}
where the functor $\iota$ is the obvious inclusion and the functor $\tau$ is the one that sends an unbounded differential graded algebra $A \in \DGA_S$ to its truncation:
\[
\tau(A) =  [ \cdots \to A_2 \xrightarrow{d_2} A_1 \xrightarrow{d_1} \ker(d_0)  ] \in \DGA_S^+\, .
\]
It is straightforward to see that $\tau$ preserves fibrations and weak equivalences, and dually the map $\iota$ preserves cofibrations and weak equivalences, in particular it sends cofibrant objects to cofibrant objects. Now let $A \in \Alg_S$ and choose a cofibrant replacement $\smash{Q\acfib A \in \DGA_S^+}$. A priori this map is only surjective in positive degrees, but because $A$ is concentrated in degree $0$, we have $A=\HH_0(A)$, and the isomorphism in homology $\smash{\HH_0(Q) \cong \HH_0(A)}$ proves that it is surjective also in degree $0$, so still a fibration in $\DGA_S$. In other words the cofibrant replacement $\smash{Q\acfib A}$ is still a cofibrant replacement in $\DGA_S$ and therefore it can be used to compute the derived representation functor \eqref{drep}, showing that Definition \ref{def1} is equivalent to Definition \ref{def2}.
\end{Remark}
\begin{Remark} [\textbf{The dual language of dg-schemes}] Another reason for considering the category $\cDGA_k^+$ instead of $\cDGA_k$ is that it is anti-equivalent to the category of differential graded schemes, as introduced by I. Ciocan-Fontanine and M. Kapranov in \cite{CiKa2}. We recall their definition of dg-schemes (over $k$) as a pair $X=(X_0, \O_{X,\bullet})$, where $X_0$ is an ordinary scheme over $k$ and $\O_{X,\bullet}$ is a sheaf of non-negatively graded commutative dg-algebras on $X_0$ such that the degree zero is $\O_{X,0}=\O_{X_0}$ the structure sheaf of the classical scheme $X_0$ and each $\O_{X,i}$ is quasicoherent over $\O_{X,0}$. A morphism of dg-schemes over $k$ is just a morphism of dg-ringed spaces $f:X=(X_0,\O_{X,\bullet}) \to Y=(Y_0,\O_{Y,\bullet})$, and this makes $\DGSch_k$ into a category. A dg-scheme $X$ is called \emph{affine} if the underlying classical scheme $X_0$ is affine. The full subcategory of dg-affine schemes $\DGAff_k \subset \DGSch_k$ is antiequivalent to the category $\cDGA_k^+$, via the the equivalence of categories:
\begin{equation}
\label{dgaff}
\adjunct{\DGAff_k^\op}{\cDGA_k^+}{\Gamma( - ) }{\Spec}\, ,
\end{equation}
where $\Gamma(-)$ is the functor taking a dg-affine $X$ into the global sections of the sheaf $\O_{X,\bullet}$ (degreewise), and $\Spec$ is the dg-spectrum sending a commutative dg-algebra $A$ to the classical scheme $X_0=\Spec(A_0)$ together with the quasicoherent sheaves $\O_{X,i}$ associated to the modules $A_i$ via the correspondence $\QCohc_{X_0} \cong \Mod_{A_0}$. These names are motivated by the fact that the previous equivalence restricts to the classical equivalence of categories
\begin{equation}
\label{aff}
\adjunct{\Aff_k^\op}{\cAlg_k}{\Gamma( - ) }{\Spec}\, .
\end{equation}
This definition of dg-affine schemes coincides with Toën-Vezzosi's definition of derived schemes $\dAff_k^\op = \scAlg_k$ as simplicial commutative algebras (\cite{To}) because over a field $k$ of characteristic zero they are equivalent to commutative dg-algebras.

The equivalence of categories \eqref{dgaff} can be trivially used to transfer the projective model structure on commutative dg-algebras to the category of dg-affine schemes. Obviously the pair $(\Gamma(-),\Spec)$ becomes a Quillen equivalence, i.e. an equivalence on the homotopy categories:
\begin{equation}
\label{hodgaff}
\adjunct{\Ho(\DGAff_k^\op)}{\Ho(\cDGA_k^+)}{\L\Gamma( - ) }{\RR\Spec}\, .
\end{equation}
Moreover because every object in $\cDGA_k^+$ is fibrant, the derived spectrum $\RR\Spec$ actually coincides with the underived $\Spec$ on the objects. 
\end{Remark}
\begin{Defn} The \emph{derived representation scheme} of the relative algebra $A \in \Alg_S$ in a vector space $V$ is the object $\DRep_V(A) \in \Ho(\DGAff_k)$ obtained applying to $A$ the following composition of functors:
\begin{equation}
\DRep_V(-):\Alg_S \to \Ho(\DGA_S^+) \xrightarrow{\L(-)_V} \Ho(\cDGA_k^+) \xrightarrow{\RR\Spec} \Ho(\DGAff_k) \, .
\end{equation}
\end{Defn}
This definition differs from the one given in \cite{BeKhRa} and \cite{BeFeRa} only from the last composition with the derived spectrum functor. The reason we have to do so is to be consistent with the notation for the classical representation scheme $\Rep_V(A) \in \Aff_k$.
\begin{Remark} Because every object in $\cDGA_k^+$ is fibrant, the derived representation scheme $\smash{\DRep_V(A)}$ is simply
\begin{equation}
\label{eq:drep}
\DRep_V(A) = \RR\Spec(\L(A)_V)= \Spec(\L(A)_V)=\Spec((A_\cof)_V)= \Rep_V(A_\cof)\, ,
\end{equation}
where $\smash{A_\cof \acfib A \in \DGA_S^+}$ is a cofibrant replacement. Different choices of cofibrant replacements give different models to $\DRep_V(A)$, which are weakly equivalent to each other. In what follows we choose one specific model for $\DRep_V(A)$ obtained through a choice of a preferred cofibrant replacement. Strictly speaking in \eqref{eq:drep} we should write $\smash{\DRep_V(A) = \gamma \Rep_V(A_\cof)} \in \Ho(\DGAff_k)$ to remember that we are considering the homotopy class, but we make an abuse of notation by dropping $\gamma$.
\end{Remark}

\begin{Exs}
\label{Exs2}
In the following examples we describe explicit cofibrant resolutions for some of the algebras in the Examples~\ref{Exs1} and give a model for their derived representation schemes with value in a vector space $V$ concentrated in degree $0$ (therefore we still use the notation $\DRep_n(-)=\DRep_V(-)$ for $V=k^n$).
\begin{enumerate}
\item The free algebra in $m$ generators $A=F_m$ is already a cofibrant object in $\DGA_k^+$ because it is free, therefore
\[
\DRep_n(F_m) \cong \Rep_n(F_m) \cong M_{n\times n}(k)^{ m}\, .
\]
\item The commutative algebra in two variables $A=k[x,y]$ is not cofibrant because of the relation $[x,y]=0$. It turns out that it suffices to add one variable $\vartheta$ in homological degree $1$ that kills this relation ($d \vartheta = [x,y]$) to obtain a cofibrant replacement:
\[
A_\cof := k\langle x,y,\vartheta \rangle  \acfib A = k[x,y] \, ,
\]
and therefore the derived representation scheme is the nothing else but the (spectrum of the) Koszul complex for the scheme of $n\times n$ commuting matrices:
\[
\DRep_n(A) \cong \Rep_n(A_\cof) =\Spec \big( k[ x_{ij}, y_{ij},\vartheta_{ij}]_{i,j=1}^n 
\big) \, ,\qquad d\vartheta_{ij} = \sum_k x_{ik}y_{kj} -y_{ik}x_{kj}\, .
\]
\item Calabi--Yau algebras of dimension $3$ (see \cite[\S~1.3]{Gi3}). Consider the free algebra $F_m$ and the commutator quotient space of cyclic words: $(F_m)_\cyc =F_m \slash [F_m,F_m]$. M. Kontsevich introduced linear maps $\partial_i : (F_m)_\cyc \to F_m$ for each $i=1,\dots, m$ which we can use, together with a potential $\Phi \in (F_m)_\cyc$, to define the algebra
\begin{equation}
\label{algpot}
A= \mathfrak{U}(F_m,\Phi) := F_m \slash ( \partial_i \Phi )_{i=1,\dots,m}\, ,
\end{equation}
which is the quotient of the free algebra $F_m$ by the two-sided ideal generated by the partial derivatives of the potential $\Phi$. For example when $m=3$, $F_3=k \langle x,y,z \rangle$ and observe that the partial derivatives for the potential $\Phi = xyz-yxz$ give the commutators, therefore $A= k[x,y,z]$ is the polynomial ring in $3$ variables. For an algebra defined by a potential as above in \eqref{algpot} we define the following dg-algebra:
\begin{equation}
\label{dalgpot}
\begin{aligned}
& \mathfrak{D}(F_m,\Phi) := k \langle x_1,\dots, x_m, \vartheta_1,\dots, \vartheta_m, t \rangle\, ,\\
&(\mathrm{deg}(x_i,\vartheta_i,t) = (0,1,2))\quad d \vartheta_i = \partial_i \Phi, \quad d t = \sum\limits_{i=1}^m [x_i,\vartheta_i]\, .
\end{aligned}
\end{equation}
Ginzburg explains in \cite{Gi3} how Calabi--Yau algebras of dimension $3$ are all of the form \eqref{algpot} and they are exactly those for which a suitable completion of $\mathfrak{D}(F,\Phi)$ is a cofibrant resolution. This is in particular true for the example of polynomials in $3$ variables (see example 6.3.2. in \cite{BeFeRa}), for which no completion is needed and:
\[
\DRep_n ( k[x,y,z]) \cong \Rep_n( k[x,y,z,\xi,\vartheta,\lambda, t]) = \Spec\big( k[x_{ij},y_{ij},z_{ij},\xi_{ij},\vartheta_{ij},\lambda_{ij}, t_{ij}]_{i,j=1}^n\big)\, ,
\]
where the variables $\xi,\vartheta,\lambda$ are the ones we called $\vartheta_1,\vartheta_2,\vartheta_3$ in \eqref{dalgpot}.
\end{enumerate}
\end{Exs}


\subsection{G-invariants and isotypical components}

\label{subsec:ginvariants}

On the (derived) representation scheme there is a natural action of the general linear group $\GL(V)$ by which one can consider the associated character scheme of invariants. Later we consider only invariants by a subgroup $G\subset \GL(V)$, therefore we propose the following theory of partial invariant subfunctors by $G$ that generalises the theory introduced in \cite[\S~2.3.5]{BeKhRa} and in \cite[\S~3.4]{BeFeRa} in the absolute case $S=k$. However we point out that the results of this section are strongly inspired by \cite{BeKhRa} and \cite{BeFeRa}, which already contain most of the material needed.

Suppose that both $V$ and $S$ are concentrated in degree 0, $\rho : S\to \End(V)$ is a fixed representation and consider
\[
G_S := \{ g \in \GL(V) \, | \, g^{-1} \rho(s) g = \rho (s) \,\,\, \forall s \in S \}\,,
\]
the subgroup of $\rho$-preserving transformations. Observe that in the absolute case $ S=k$ then $G_S= \GL(V)$. Now consider any reductive subgroup $G \subset G_S $, whose right action on $\End(V)$ extends to the functor:
\[
\End(V) \otimes_k (-) : \cDGA_k \to \DGA_S 
\]
(for this we need that $G$ consists of transformations which all preserve $\rho$). And consequently we obtain a left action on $(-)_V : \ \DGA_S \to \cDGA_k$ and we can consider the invariant subfunctor 
\begin{equation}
\label{invsubf}
\begin{aligned}
(-)_V^G : \,&\DGA_S \to \cDGA_k\\
&A \longmapsto A_V^G\, .
\end{aligned}
\end{equation}
As explained in \cite{BeKhRa}, unlike $(-)_V$, the functor $(-)_V^G$ does not seem to have a right adjoint, so we cannot prove that it has a left derived functor from Quillen's adjunction theorem. Nevertheless we can prove that such a left derived functor exists:
\begin{Thm}
\label{thm:derivedV}
\begin{itemize}
\item [(a)] $(-)_V^G : \DGA_S \to \cDGA_k$ has a total left derived functor $\L(-)_V^G$.
\item [(b)] For every $A \in \DGA_S$ there is a natural isomorphism:
\begin{equation}
\HH_\bullet [\L(A)_V^G]  \cong \HH_\bullet (A,V)^G\, .
\end{equation}
\end{itemize}
\end{Thm}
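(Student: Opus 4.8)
The plan is to exploit the factorisation $(-)_V^G = (-)^G \circ (-)_V$, where $(-)^G$ denotes the functor of $G$-invariants, and to leverage the single decisive property of $G$: being reductive over the characteristic-zero field $k$, it is linearly reductive, so that the invariants functor $(-)^G$ is \emph{exact} on rational $G$-representations. Concretely, the conjugation action of $G \subset \GL(V)$ on $\End(V)$ is algebraic, and all the functorial constructions entering $A_V = (\sqrt[V]{A})_\nn$ preserve rationality, so $G$ acts rationally on $A_V$ (degreewise, when $A \in \DGA_S$); the Reynolds operator then provides a natural $G$-equivariant splitting of the inclusion $A_V^G \hookrightarrow A_V$ compatible with the differential. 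This exactness is the engine behind both statements.

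For part (a), I would recall from Theorem~\ref{thm:L(A)_V} that $(-)_V$ is a left Quillen functor, hence by Ken Brown's lemma it carries weak equivalences between cofibrant objects of $\DGA_S$ to quasi-isomorphisms in $\cDGA_k$. Since $(-)^G$ is exact it preserves quasi-isomorphisms, so the composite $(-)_V^G$ again sends weak equivalences between cofibrant objects to weak equivalences. By the standard existence criterion for total left derived functors (a functor sending weak equivalences between cofibrant objects to weak equivalences admits a total left derived functor, computed on a cofibrant replacement), $(-)_V^G$ then has a total left derived functor $\L(-)_V^G$ with $\L(A)_V^G = (A_\cof)_V^G$ for any cofibrant replacement $A_\cof \acfib A$; independence of the chosen replacement is exactly the preservation of weak equivalences just established.

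For part (b), I would unwind the definition $\L(A)_V^G = \big((A_\cof)_V\big)^G$, viewed as a complex. Because $(-)^G$ is exact it commutes with the homology functor, so
\begin{equation}
\HH_\bullet\big[\L(A)_V^G\big] = \HH_\bullet\big[\big((A_\cof)_V\big)^G\big] \cong \HH_\bullet\big[(A_\cof)_V\big]^G = \HH_\bullet(A,V)^G\, .
\end{equation}
Naturality in $A$ follows from the functoriality (up to homotopy) of cofibrant replacement together with the naturality of the Reynolds projection.

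The technical heart --- and the only place where care is genuinely required --- is the verification that the $G$-action on the representing dg-algebra $(A_\cof)_V$ is rational in each homological degree, so that linear reductivity of $G$ and the attendant exactness of $(-)^G$ apply, and that the Reynolds splitting is compatible degreewise with the differential. Once this is secured, every remaining step is formal: the existence in (a) reduces to the closure of ``preserves weak equivalences between cofibrant objects'' under composition with an exact functor, and the isomorphism in (b) is merely the commutation of an exact functor with homology.
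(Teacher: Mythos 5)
Your argument is correct, but it reaches part (a) by a genuinely different route than the paper. The paper never factors $(-)_V^G$ through an exact invariants functor at that stage: it invokes Brown's lemma (Lemma A.2 in \cite{BeKhRa}) to reduce the existence of $\L(-)_V^G$ to showing that $(-)_V^G$ sends \emph{acyclic cofibrations between cofibrant objects} to weak equivalences, and verifies this by hand --- such an acyclic cofibration $i:A\accof B$ admits a homotopy inverse $p$ (all objects of $\DGA_S$ are fibrant), the identity $p_V^G\circ i_V^G=\id$ gives injectivity of $\HH_\bullet(i_V^G)$, and the dedicated technical Lemma~\ref{lem:polhom}, which shows that a polynomial homotopy $h:B\to B\otimes\Omega$ induces a homotopy $h_V^G:B_V^G\to B_V^G\otimes\Omega$ on invariants, gives surjectivity. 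Notably, that argument uses no reductivity in part (a); reductivity enters the paper only in part (b), and there precisely as the commutation of invariants with homology that you use throughout. By contrast, you compose Ken Brown's lemma for the left Quillen functor $(-)_V$ (Theorem~\ref{thm:L(A)_V}) with exactness of $(-)^G$ coming from linear reductivity, so that preservation of weak equivalences between cofibrant objects in (a) and the isomorphism in (b) both fall out of the single exactness statement. What your route buys is economy and uniformity: once degreewise rationality of the $G$-action on $(A_\cof)_V$ is secured --- a point the paper also needs, implicitly, for its part (b) --- you need no homotopy-theoretic lemma at all, and the argument transports verbatim to the isotypical-component functors $(-)_{\lambda,V}^G$ of Theorem~\ref{thm:derivedlambdaV}, since $(U_\lambda^*\otimes -)^G$ is again exact. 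What the paper's route buys is that part (a) holds for an arbitrary (not necessarily reductive) subgroup $G\subseteq G_S$, and that Lemma~\ref{lem:polhom} serves as a single reusable workhorse for both Theorem~\ref{thm:derivedV} and Theorem~\ref{thm:derivedlambdaV}. Your part (b) coincides with the paper's.
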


To prove this theorem it is convenient to recall a few notions/results. Let $\Omega = k[t]\oplus k[t] dt$ be the algebraic de Rham complex of the affine line $\mathbb{A}^1_{k}$ (in our conventions differentials have degree $-1$ and therefore $dt$ has the wrong degree $-1$). We define a \emph{polynomial homotopy} between $f,g:A \to B \in \DGA_S$ as a morphism $h:A \to B \otimes \Omega \in \DGA_S$, such that $h(0)=f$ and $h(1)=g$, where for each $a\in k$, $h(a)$ is the following composite map:
\[
h(a) : A \xrightarrow[]{h} B \otimes \Omega \xrightarrow[]{\pi} B \otimes \Omega\slash(t-a) \cong B\otimes k = B\, .
\]
The reason why polynomial homotopy is equivalent to the homotopy equivalence relation in $\DGA_S$ is explained in Proposition B.2. in \cite{BeKhRa}.
\begin{Lem}
\label{lem:polhom}
Let $h:A \to B\otimes \Omega \in \DGA_S$ be a polynomial homotopy between $f,g:A \to B$. Then:
\begin{enumerate}
\item There is a homotopy $h_V :A_V \to B_V\otimes \Omega \in \cDGA_k$ between $h_V(0) = f_V$ and $h_V(1)=g_V$.
\item $h_V$ restricts to a morphism $h_V^{G}: A_V^G \to B_V^G \otimes \Omega \in \cDGA_k$.
\end{enumerate}
\end{Lem}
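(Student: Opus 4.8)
The plan is to realise $h_V$ as the functor $(-)_V$ applied to $h$, post-composed with a canonical comparison morphism, and then to deduce both the endpoint condition and the $G$-equivariance formally from naturality. The one delicate point is that $(-)_V$ does \emph{not} commute with the operation $-\otimes_k\Omega$ on the nose: a commutative subalgebra of $\END(V)\otimes_k C$ need not be central, so I only expect a natural transformation rather than an isomorphism. To build it, I use the representability \eqref{natiso}. For $C\in\cDGA_k$, since $\otimes_k$ is the coproduct in $\cDGA_k$ and using the adjunction \eqref{reladj},
\[
\Hom_{\cDGA_k}(B_V\otimes_k\Omega,\,C)\;\cong\;\Hom_{\DGA_S}(B,\END(V)\otimes_k C)\times\Hom_{\cDGA_k}(\Omega,C).
\]
A pair $(\alpha,\gamma)$ on the right determines $\psi\colon B\otimes_k\Omega\to\END(V)\otimes_k C$ in $\DGA_S$ by $\psi(b\otimes\omega)=\alpha(b)\cdot(1\otimes\gamma(\omega))$, which is a well-defined algebra map because $1\otimes\gamma(\omega)$ is central; this assignment is natural in $C$, so by Yoneda it corresponds to a natural map $\Phi_B\colon(B\otimes_k\Omega)_V\to B_V\otimes_k\Omega$. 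I then set $h_V:=\Phi_B\circ(h)_V\colon A_V\to B_V\otimes_k\Omega$, a morphism of $\cDGA_k$.

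For the endpoints, note that the same construction works with any $R\in\cDGA_k$ in place of $\Omega$ and is natural in $R$; in particular $\ev_a\colon\Omega\to k$ induces compatible evaluation maps on $B\otimes_k\Omega$ and on $B_V\otimes_k\Omega$, and $\Phi_{B,k}\colon B_V\to B_V$ is the identity. Naturality of $\Phi$ in the $R$-slot then gives $(\ev_a)_V=\ev_a\circ\Phi_B$, and combining this with functoriality of $(-)_V$ applied to $h(a)=\ev_a\circ h$ yields $h_V(a)=\ev_a\circ\Phi_B\circ(h)_V=(\ev_a\circ h)_V=(h(a))_V$. Taking $a=0,1$ gives $h_V(0)=f_V$ and $h_V(1)=g_V$, which is part (1).

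For part (2), recall that the $G$-action on $(-)_V$ is by natural automorphisms, so $(h)_V$ is automatically $G$-equivariant for the standard actions on $A_V$ and $(B\otimes_k\Omega)_V$, since $h$ is a morphism of $\DGA_S$. It remains to check that $\Phi_B$ intertwines the action on $(B\otimes_k\Omega)_V$ with the action on $B_V\otimes_k\Omega$ given by the standard one on $B_V$ and trivially on $\Omega$. This follows from the explicit formula: an element $g\in G\subset G_S$ acts on $\END(V)\otimes_k C$ by conjugation by $g\otimes 1$, which fixes the central factor $1\otimes C$, so conjugating $\psi(b\otimes\omega)=\alpha(b)\,(1\otimes\gamma(\omega))$ conjugates only $\alpha$ and leaves $\gamma$ untouched, i.e. $g\cdot(\alpha,\gamma)=(g\cdot\alpha,\gamma)$. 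Hence $h_V=\Phi_B\circ(h)_V$ is $G$-equivariant with $G$ acting trivially on $\Omega$. Finally, because $\Omega$ is a free $k$-module on which $G$ acts trivially, taking $G$-invariants commutes with $-\otimes_k\Omega$, so $(B_V\otimes_k\Omega)^G=B_V^G\otimes_k\Omega$; restricting $h_V$ to invariants produces $h_V^{G}\colon A_V^G\to B_V^G\otimes_k\Omega$, as required.

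The main obstacle is exactly the failure of $(-)_V$ to commute strictly with $-\otimes_k\Omega$, so the whole argument hinges on replacing the naive isomorphism by the natural map $\Phi_B$ and on the single observation that the $\Omega$-variable is carried into the central factor $1\otimes C$, where the conjugation $G$-action is trivial. Once $\Phi_B$ and its equivariance are in place, everything else is naturality bookkeeping, and the endpoint and invariance statements for $h_V^G$ follow by restriction.
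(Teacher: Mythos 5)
Your proof is correct, and it is essentially the argument the paper intends: the paper omits the proof, deferring to Lemma 2.5 of \cite{BeKhRa}, where $h_V$ is likewise obtained from the representability adjunction (your $\Phi_B\circ (h)_V$ coincides with the adjoint of the composite of $h$ with the universal representation $B\to \END(V)\otimes_k B_V$ tensored with $\mathrm{id}_\Omega$). Your treatment also correctly respects the point stressed in the Remark following the Lemma --- that $h_V$ is \emph{not} $(h)_V$ --- and isolates the key fact (centrality of the factor $1\otimes C$, which is fixed by conjugation) that yields both the endpoint identities and the $G$-equivariance.
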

\begin{Remark} It is important to observe that, despite the misleading notation, the map $h_V$ in part (1) is \emph{not} the map obtained applying the functor $(-)_V$ to the map $h$. The latter would in fact be a map $A_V \to (B\otimes\Omega)_V \neq B_V\otimes \Omega$. The same thing applies for the map $h_V^G$ in part (2), which is \emph{not} the map obtained applying the functor $(-)_V^G$ to the map $h$.
\end{Remark}
\begin{proof}
We omit the proof because it is analogous to the proof of Lemma 2.5 in \cite{BeKhRa}. 
\end{proof}

\begin{proof} [Proof of Theorem~\ref{thm:derivedV}] \begin{itemize}
\item [(a)] By Brown's lemma (Lemma A.2 in \cite{BeKhRa}) it is sufficient to prove that the functor $(-)_V^G$ maps acyclic cofibrations between cofibrant objects to weak equivalences. Let $\smash{i :A \accof B}$ such an acyclic cofibration between cofibrant objects $A,B\in \DGA_S$. Then there is a map $p:B \to A$ such that $p \circ i = \id_A$ and $i \circ p$ is homotopic to $\id_B$. The first composition yields by functoriality $\smash{p_V^G \circ i_V^G = \id_{A_V^G}}$ and this proves that $i_V^G$ is injective in homology. The second fact that $i \circ p \sim \id_B$ yields, by Proposition B.2 in \cite{BeKhRa}, an explicit homotopy $h: B \to B \otimes \Omega$ between $i \circ p = h(0)$ and $\id_B = h(1)$. By Lemma~\ref{lem:polhom} there is a homotopy $h_V^G$ between $\smash{(i\circ p)_V^G = i_V^G\circ p_V^G }$ and $\smash{\id_{B_V^G}}$ and by Remark B.4.4 in \cite{BeKhRa} they induce the same maps at the level of homologies. This proves that $i_V^G$ is also surjective in homology.
\item [(b)] The total left derived functor $\L(-)_V^G$ obtained in the previous point sends the class of $\smash{\gamma A \in \Ho(\DGA_S)}$ to the class of $\smash{\gamma (QA)_V^G \in \Ho(\cDGA_k)}$ where $QA$ is a cofibrant resolution in $\DGA_S$. Moreover $\HH_\bullet(-,V)^G$ maps $\gamma A$ to $\HH_\bullet ([\gamma (QA)_V])^G$. But because $G$ is reductive (and $k$ is a field of characteristic $0$) there is an isomorphism
\[
\HH_\bullet (\gamma [(QA)_V^G] ) \cong [\HH_\bullet(\gamma(QA_V))]^G\, .
\]
and this concludes the proof.
\end{itemize}
\end{proof}

An analogous result holds also for the functor restricted on non-negatively graded objects, and it can be actually obtained as a corollary of Theorem~\ref{thm:derivedV}:

\begin{Cor}
\label{cor:derivedV+}
\begin{itemize}
\item [(a)] $(-)_V^G : \DGA_S^+ \to \cDGA_k^+$ has a total left derived functor $\L(-)_V^G$.
\item [(b)] For every $A \in \DGA_S^+$ there is a natural isomorphism:
\begin{equation}
\HH_\bullet [\L(A)_V^G]  \cong \HH_\bullet (A,V)^G\, .
\end{equation}
\end{itemize}
\end{Cor}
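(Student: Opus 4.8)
The plan is to deduce the Corollary from Theorem~\ref{thm:derivedV} by transporting everything through the adjunction $(\iota,\tau)$ of Remark~\ref{rem:+}. I would lean on two facts. First, as recorded in Remark~\ref{rem:+}, the inclusion $\iota : \DGA_S^+ \to \DGA_S$ preserves cofibrations, weak equivalences and cofibrant objects. Second, the functor $(-)_V^G$ restricted to $\DGA_S^+$ automatically lands in $\cDGA_k^+$: since $V$ (hence $\END(V)$) is concentrated in degree $0$, the free product $\END(V)\ast_S A$, its invariant subalgebra $\sqrt[V]{A}$, the abelianisation, and finally the passage to $G$-invariants all preserve non-negativity of the grading. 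Thus $(-)_V^G : \DGA_S^+ \to \cDGA_k^+$ is literally the restriction of $(-)_V^G : \DGA_S \to \cDGA_k$ to non-negatively graded objects.

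For part (a) I would invoke Brown's lemma (Lemma A.2 in \cite{BeKhRa}) in the category $\DGA_S^+$: it suffices to check that $(-)_V^G$ sends an acyclic cofibration $i : A \accof B$ between cofibrant objects of $\DGA_S^+$ to a weak equivalence. Applying $\iota$, the map $\iota(i) : \iota A \accof \iota B$ is again an acyclic cofibration between cofibrant objects, now in $\DGA_S$. The argument establishing Theorem~\ref{thm:derivedV}(a) shows precisely that $(-)_V^G$ carries such a map to a weak equivalence in $\cDGA_k$. Since $\iota(i)_V^G$ is the very same map as $i_V^G$, and weak equivalences in $\cDGA_k^+$ are exactly the quasi-isomorphisms, $i_V^G$ is a weak equivalence in $\cDGA_k^+$. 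Brown's lemma then yields the total left derived functor $\L(-)_V^G$ on $\Ho(\DGA_S^+)$.

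For part (b), given $A \in \DGA_S^+$ I would pick a cofibrant replacement $QA \acfib A$ in $\DGA_S^+$. Then $\iota(QA)$ is a cofibrant object of $\DGA_S$ weakly equivalent to $\iota A$, so it computes the derived functor in both the $+$ setting and the unbounded one: $\L(A)_V^G$ and $\L(\iota A)_V^G$ are both represented by the single object $(QA)_V^G$, the value of a left derived functor depending only on a chosen cofibrant object weakly equivalent to $A$. The same remark applied to $(-)_V$ shows that the representation homology $\HH_\bullet(A,V)$ of Definition~\ref{def2} coincides with $\HH_\bullet(\iota A, V)$ of Definition~\ref{def1}. Hence
\[
\HH_\bullet[\L(A)_V^G] \;=\; \HH_\bullet[\L(\iota A)_V^G] \;\cong\; \HH_\bullet(\iota A, V)^G \;=\; \HH_\bullet(A,V)^G,
\]
the middle isomorphism being Theorem~\ref{thm:derivedV}(b). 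One can alternatively bypass the theorem and argue directly: because $G$ is reductive and $\mathrm{char}\,k = 0$, taking $G$-invariants is exact and commutes with homology, so $\HH_\bullet[(QA)_V^G] \cong \HH_\bullet[(QA)_V]^G = \HH_\bullet(A,V)^G$.

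The one point requiring care — and where I expect the only friction — is that $\iota(QA) \to \iota A$ need not be an acyclic \emph{fibration} in $\DGA_S$: the replacement $QA \acfib A$ is only guaranteed surjective in strictly positive degrees, so, unlike the degree-$0$ situation treated in Remark~\ref{rem:+}, it may fail to be a fibration in degree $0$. This is harmless, because the total left derived functor is computed by any cofibrant object weakly equivalent to $A$ and does not require the comparison map to be an acyclic fibration; the well-definedness up to weak equivalence is exactly what the Brown's-lemma argument of part (a) supplies. I would state this explicitly, so that the reader does not expect the cleaner reduction of Remark~\ref{rem:+}, which genuinely used that $A$ was concentrated in degree $0$.
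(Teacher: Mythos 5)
Your proposal is correct and follows essentially the same route as the paper's proof: Brown's lemma applied in $\DGA_S^+$, transport of the acyclic cofibration through $\iota$ into $\DGA_S$ where the argument of Theorem~\ref{thm:derivedV}(a) applies, and then part (b) deduced via reductivity of $G$ exactly as in the proof of Theorem~\ref{thm:derivedV}(b).

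One remark: the caution in your final paragraph is moot. For non-negatively graded complexes, an acyclic fibration in $\DGA_S^+$ (a quasi-isomorphism surjective in strictly positive degrees) is automatically surjective in degree $0$ as well: given $a \in A_0$, every element of $Q_0$ is a cycle, so surjectivity of $\HH_0$ yields $q \in Q_0$ with $f(q) - a = d b$ for some $b \in A_1$, and lifting $b$ through the degree-$1$ surjectivity to $c \in Q_1$ gives $f(q - dc) = a$. Hence $\iota(QA) \to \iota A$ \emph{is} an acyclic fibration in $\DGA_S$, with no hypothesis that $A$ be concentrated in degree $0$; the paper records this general fact in the Notation of Appendix~\ref{app:A}. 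Your fallback (that a left derived functor only needs a cofibrant object weakly equivalent to $A$) is sound, but the cleaner reduction you thought was unavailable actually goes through.
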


\begin{proof}
Using Brown's lemma we just need to prove that $(-)_V^G$ sends a trivial cofibrations between cofibrant objects $\smash{A \accof B}$ to weak equivalences. We consider the following commutative diagram:
\begin{equation}
\begin{tikzcd}
  \DGA_S^+ \arrow[r, "(-)_V^G"] \arrow[d, "\iota"]
    & \cDGA_k^+ \arrow[d, "\iota" ] \\
  \DGA_S \arrow[r, "(-)_V^G" ]
&\cDGA_k \end{tikzcd}
\end{equation}
and observe that $\smash{\iota(A \accof B)}$ is still a trivial cofibration between cofibrant objects in $\DGA_S$, according to Remark~\ref{rem:+}. Now we can use the proof of Theorem~\ref{thm:derivedV} to conclude that the functor $(-)_V^G :\DGA_S \to \cDGA_k$ sends this map to a weak equivalence:
\[
(\iota A)_V^G=\iota (A_V^G) \overset{\sim}{\to} (\iota B)_V^G=\iota (B_V^G ) \in \cDGA_k\, .
\]
Finally from the very construction of $\iota$ we have that this map is a weak equivalence if and only if the map $\smash{A_V^G\overset{\sim}{\to} B_V^G\in \cDGA_k^+}$ is a weak equivalence. This concludes the proof of (a), while (b) follows from (a) as in Theorem~\ref{thm:derivedV}.
\end{proof}
Now we derive also the other isotypical components of the representation functor. Let us fix any irreducible, finite-dimensional representation $U_\lambda$ of the reductive group $G$. We consider the following functor:
\begin{equation}
\label{lambdasubf}
\begin{aligned}
(-)_{\lambda,V}^G :\, & \DGA_S \to \dgVect_k\\
&A \longmapsto \big(U_\lambda^*\otimes_k A_V\big)^G\, .
\end{aligned}
\end{equation}
which is the invariant subfunctor of the functor $(-)_{\lambda,V} := U_\lambda^* \otimes_k (-)_V$. Then we can prove the following analogue to Theorem~\ref{thm:derivedV}:
\begin{Thm}
\label{thm:derivedlambdaV}
\begin{itemize}
\item [(a)] The functor \eqref{lambdasubf} has a total left derived functor $\L (-)_{\lambda,V}^G$.
\item [(b)] For every $A \in \DGA_S$ there is a natural isomorphism:
\begin{equation}
\HH_\bullet [\L (A)_{\lambda,V}^G]  \cong \big( U_\lambda^* \otimes \HH_\bullet (A,V) \big)^G\, .
\end{equation}
\end{itemize}
\end{Thm}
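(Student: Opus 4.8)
The plan is to follow the proof of Theorem~\ref{thm:derivedV} almost verbatim, the point being that the single new ingredient---tensoring with the fixed finite-dimensional vector space $U_\lambda^*$ sitting in degree $0$---is an exact operation that commutes with passage to homology and with the formation of $G$-invariants. I would first factor the functor \eqref{lambdasubf} as the composite
\[
\DGA_S \xrightarrow{\,(-)_V\,} \cDGA_k \xrightarrow{\,U_\lambda^*\otimes_k(-)\,} \dgVect_k \xrightarrow{\,(-)^G\,} \dgVect_k\,,
\]
where on the middle arrow we remember the $G$-action induced on $A_V$. For part (a), by Brown's lemma (Lemma A.2 in \cite{BeKhRa}) it suffices to check that $(-)_{\lambda,V}^G$ sends an acyclic cofibration $i : A \accof B$ between cofibrant objects of $\DGA_S$ to a weak equivalence in $\dgVect_k$.

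Given such an $i$, I would argue exactly as in Theorem~\ref{thm:derivedV}(a): choosing a retraction $p : B \to A$ with $p\circ i = \id_A$ and a polynomial homotopy between $i\circ p$ and $\id_B$, Lemma~\ref{lem:polhom}(1) produces $p_V$ together with a homotopy $h_V$, showing that $i_V : A_V \to B_V$ is a homotopy equivalence and hence a $G$-equivariant quasi-isomorphism (equivalently, one may simply invoke that $(-)_V$ is left Quillen, so preserves acyclic cofibrations). Since $U_\lambda^*$ is a finite-dimensional vector space concentrated in degree $0$, the map $U_\lambda^*\otimes_k i_V$ is again a $G$-equivariant quasi-isomorphism. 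Finally, because $G$ is reductive and $\mathrm{char}(k)=0$, the invariants functor $(-)^G$ is exact on $G$-representations and so preserves quasi-isomorphisms; thus $i_{\lambda,V}^G = (U_\lambda^*\otimes_k i_V)^G$ is a weak equivalence, establishing (a).

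For part (b), I would compute the derived functor on a cofibrant resolution $QA \acfib A$ in $\DGA_S$, so that $\L(A)_{\lambda,V}^G \simeq (U_\lambda^*\otimes_k (QA)_V)^G$. Using once more that $U_\lambda^*\otimes_k(-)$ and $(-)^G$ are exact---the former trivially, the latter by reductivity in characteristic zero---these operations commute with $\HH_\bullet$, giving
\[
\HH_\bullet\big[\L(A)_{\lambda,V}^G\big] \cong \big(U_\lambda^*\otimes_k \HH_\bullet((QA)_V)\big)^G = \big(U_\lambda^*\otimes_k \HH_\bullet(A,V)\big)^G\,,
\]
where the last equality is the definition of representation homology, and naturality is immediate from the functoriality of each step. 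The only genuinely substantial point, exactly as in Theorem~\ref{thm:derivedV}(b), is the exactness of $(-)^G$---i.e.\ that taking $G$-invariants commutes with homology---which is where reductivity of $G$ and the characteristic-zero hypothesis are essential; everything else is formal.
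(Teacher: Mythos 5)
Your proposal is correct, but the mechanism you use for part (a) is genuinely different from the paper's. The paper proves an analogue of Lemma~\ref{lem:polhom} (namely Lemma~\ref{lem:polhom2}): the homotopy $h_V$ is tensored with $\id_{U_\lambda^*}$, and since $h_V$ is $G$-equivariant the resulting homotopy $h_{\lambda,V}$ restricts to the invariant subcomplexes; the proof of Theorem~\ref{thm:derivedV}(a) is then repeated verbatim, with the retraction $p\circ i=\id_A$ giving injectivity of $\HH_\bullet(i_{\lambda,V}^G)$ and the transferred homotopy giving surjectivity. In that argument the reductivity of $G$ is not used in part (a) at all --- it enters only in part (b), where invariants must commute with homology. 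You instead factor the functor \eqref{lambdasubf} as $(-)^G\circ\big(U_\lambda^*\otimes_k(-)\big)\circ(-)_V$ and push the quasi-isomorphism through the three stages: $i_V$ is a $G$-equivariant quasi-isomorphism because $(-)_V$ is left Quillen (Theorem~\ref{thm:L(A)_V}) and the $G$-action on $(-)_V$ is natural; tensoring with the finite-dimensional $U_\lambda^*$ in degree $0$ is exact; and $(-)^G$ is exact on the rational $G$-modules occurring here, by reductivity and characteristic zero --- the same fact the paper invokes for part (b). This is shorter and makes transparent that the only substantive input beyond the Quillen pair is exactness of invariants (indeed, specializing $U_\lambda$ to the trivial representation, your argument also streamlines the proof of Theorem~\ref{thm:derivedV} itself). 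What the paper's homotopy-transfer route buys in exchange is that part (a), i.e.\ the bare existence of the derived functor, holds for an arbitrary subgroup $G\subset G_S$, with reductivity consumed only in the identification of its homology; your route spends reductivity already in part (a). Under the standing assumption of \S~\ref{subsec:ginvariants} that $G$ is reductive, both arguments are equally valid.
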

To prove it we need the following analogue of Lemma \ref{lem:polhom}:
\begin{Lem}
\label{lem:polhom2}
Let $h:A \to B\otimes \Omega \in \DGA_S$ be a polynomial homotopy between $f,g:A \to B$. Then:
\begin{enumerate}
\item There is a homotopy $h_{\lambda,V} :A_{\lambda,V} \to B_{\lambda,V} \otimes \Omega \in \dgVect_k$ between $h_{\lambda,V}(0) = f_{\lambda,V}$ and $h_{\lambda,V}(1)=g_{\lambda,V}$.
\item $h_{\lambda,V}$ restricts to a morphism $h_{\lambda,V}^{G}: A_{\lambda,V}^G \to B_{\lambda,V}^G \otimes \Omega \in \dgVect_k$.
\end{enumerate}
\end{Lem}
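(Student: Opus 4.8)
The plan is to deduce both parts directly from Lemma~\ref{lem:polhom} by tensoring everything with the fixed, degree-$0$ vector space $U_\lambda^*$. Since $(-)_{\lambda,V}=U_\lambda^*\otimes_k(-)_V$ by definition, we have $A_{\lambda,V}=U_\lambda^*\otimes_k A_V$, $B_{\lambda,V}=U_\lambda^*\otimes_k B_V$, and $f_{\lambda,V}=\id_{U_\lambda^*}\otimes_k f_V$ (and likewise for $g$). The essential observation is that tensoring a homotopy in $\cDGA_k$ by a fixed complex produces a homotopy in $\dgVect_k$; the multiplicative structure, which is discarded on the $(-)_{\lambda,V}$ side, plays no role, which is exactly why the target category is now $\dgVect_k$.

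For part (1), I would take the $\cDGA_k$-homotopy $h_V\colon A_V\to B_V\otimes_k\Omega$ furnished by Lemma~\ref{lem:polhom}(1) and set
\[
h_{\lambda,V}:=\id_{U_\lambda^*}\otimes_k h_V\colon\ U_\lambda^*\otimes_k A_V\ \longrightarrow\ U_\lambda^*\otimes_k\big(B_V\otimes_k\Omega\big)\cong\big(U_\lambda^*\otimes_k B_V\big)\otimes_k\Omega=B_{\lambda,V}\otimes_k\Omega,
\]
using the canonical reassociation of tensor factors. This is a morphism of dg-vector spaces because $U_\lambda^*$ is a fixed complex concentrated in degree $0$. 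The endpoint evaluations are computed by applying $\id\otimes\mathrm{ev}_a$ to the $\Omega$-factor: since $\mathrm{ev}_a$ touches only $\Omega$, one gets $h_{\lambda,V}(a)=\id_{U_\lambda^*}\otimes_k h_V(a)$ for each $a\in k$, whence $h_{\lambda,V}(0)=\id_{U_\lambda^*}\otimes_k f_V=f_{\lambda,V}$ and $h_{\lambda,V}(1)=g_{\lambda,V}$, as required.

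For part (2), the $G$-action on $U_\lambda^*\otimes_k A_V$ is the diagonal one (through $\lambda$ on $U_\lambda^*$ and through the given $G$-action on $A_V$), with $\Omega$ carrying the trivial action. The construction of $h_V$ in Lemma~\ref{lem:polhom} is functorial in the algebra variable and hence $G$-equivariant; this equivariance is precisely the mechanism behind the restriction in Lemma~\ref{lem:polhom}(2). Consequently $\id_{U_\lambda^*}\otimes_k h_V$ is $G$-equivariant for the diagonal action, so it carries $G$-invariants to $G$-invariants. Since $\Omega$ is a trivial (and free) $G$-module, taking invariants commutes with $\otimes_k\Omega$, so $(U_\lambda^*\otimes_k B_V\otimes_k\Omega)^G=(U_\lambda^*\otimes_k B_V)^G\otimes_k\Omega=B_{\lambda,V}^G\otimes_k\Omega$; this identifies the restriction of $h_{\lambda,V}$ with the desired $h_{\lambda,V}^G\colon A_{\lambda,V}^G\to B_{\lambda,V}^G\otimes_k\Omega$.

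The one point needing genuine care — and the only possible obstacle — is to confirm that $h_V$ is honestly $G$-equivariant rather than merely invariance-preserving: full equivariance of $h_V$ (not just $h_V(A_V^G)\subseteq(B_V\otimes_k\Omega)^G$) is what forces $\id_{U_\lambda^*}\otimes_k h_V$ to preserve the \emph{diagonal} invariants. This is verified by inspecting the construction of $h_V$ in the proof of Lemma~\ref{lem:polhom} (the analogue of Lemma~2.5 in \cite{BeKhRa}), where $h_V$ is assembled from $h$ by applying the functor $(-)_V$ together with the canonical natural map $(B\otimes_k\Omega)_V\to B_V\otimes_k\Omega$, both of which intertwine the $G$-action coming from $\GL(V)$.
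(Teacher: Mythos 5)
Your proposal is correct and follows essentially the same route as the paper: the paper also defines $h_{\lambda,V}=\id_{U_\lambda^*}\otimes h_V$ using the map $h_V$ from Lemma~\ref{lem:polhom}(1), and deduces part (2) from the $G$-equivariance of $h_V$ (hence of $\id_{U_\lambda^*}\otimes h_V$). Your extra care in distinguishing genuine $G$-equivariance of $h_V$ from mere preservation of invariants is a worthwhile clarification of the point the paper states without elaboration, but it is the same argument.
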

\begin{proof} It is essentially a corollary of Lemma \ref{lem:polhom}. In fact, we can define $h_{\lambda, V}$ to be 
\[
h_{\lambda,V }: A_{\lambda,V} = U_\lambda^* \otimes A_V \xrightarrow[]{\id_{U_\lambda^*} \otimes h_V} U_\lambda^*\otimes B_V \otimes \Omega = B_{\lambda,V} \otimes \Omega\, ,
\]
where $h_V$ is the map from part (1) of Lemma \ref{lem:polhom}. The map $h_V$ was $G$-equivariant, and therefore also $h_{\lambda,V}=\id_{U_\lambda^*}\otimes h_V$, from which part (2) follows.
\end{proof}
\begin{proof}[Proof of Theorem \ref{thm:derivedlambdaV}] The proof works exactly as the proof of Theorem \ref{thm:derivedV}, using Lemma \ref{lem:polhom2} instead of Lemma \ref{lem:polhom}.
\end{proof}
The analogous results in the non-negative case also hold:
\begin{Cor}
\label{cor:derivedlambdaV+}
\begin{itemize}
\item [(a)] The functor $(-)_{\lambda,V}^G :\DGA_S^+ \to \dgVect_k^+$ has a total left derived functor $\L (-)_{\lambda,V}^G$.
\item [(b)] For every $A \in \DGA_S^+$ there is a natural isomorphism:
\begin{equation}
\HH_\bullet [\L (A)_{\lambda,V}^G]  \cong \big( U_\lambda^* \otimes \HH_\bullet (A,V) \big)^G\, .
\end{equation}
\end{itemize}
\end{Cor}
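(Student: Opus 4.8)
The plan is to deduce this non-negatively graded statement from Theorem~\ref{thm:derivedlambdaV} in exactly the same way that Corollary~\ref{cor:derivedV+} is deduced from Theorem~\ref{thm:derivedV}. By Brown's lemma it suffices to show that the functor $(-)_{\lambda,V}^G : \DGA_S^+ \to \dgVect_k^+$ carries every trivial cofibration between cofibrant objects to a weak equivalence (a quasi-isomorphism), so first I would fix such a map $A \accof B$ in $\DGA_S^+$.

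Next I would pass to the unbounded categories through the inclusion functor $\iota$ of Remark~\ref{rem:+}, organised into the commutative square
\begin{equation}
\begin{tikzcd}
  \DGA_S^+ \arrow[r, "(-)_{\lambda,V}^G"] \arrow[d, "\iota"]
    & \dgVect_k^+ \arrow[d, "\iota" ] \\
  \DGA_S \arrow[r, "(-)_{\lambda,V}^G" ]
&\dgVect_k \end{tikzcd}
\end{equation}
By Remark~\ref{rem:+} the functor $\iota$ preserves cofibrations and weak equivalences and sends cofibrant objects to cofibrant objects, so $\iota(A \accof B)$ is again a trivial cofibration between cofibrant objects, now in $\DGA_S$. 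Moreover the square commutes: since $(-)_{\lambda,V}^G = (U_\lambda^* \otimes_k (-)_V)^G$ is built degreewise from the representation functor and a fixed tensor factor, one has $(\iota A)_{\lambda,V}^G = \iota(A_{\lambda,V}^G)$, and likewise for $B$.

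Then I would invoke the proof of Theorem~\ref{thm:derivedlambdaV} to conclude that the bottom functor $(-)_{\lambda,V}^G : \DGA_S \to \dgVect_k$ sends this map to a quasi-isomorphism:
\[
(\iota A)_{\lambda,V}^G=\iota (A_{\lambda,V}^G) \overset{\sim}{\to} (\iota B)_{\lambda,V}^G=\iota (B_{\lambda,V}^G ) \in \dgVect_k\, .
\]
Finally I would observe that $\iota$ is simply the inclusion of non-negatively graded complexes, so a morphism of $\dgVect_k^+$ is a quasi-isomorphism if and only if its image under $\iota$ is one; hence $A_{\lambda,V}^G \we B_{\lambda,V}^G$ in $\dgVect_k^+$, which establishes part (a). Part (b) then follows from (a) exactly as in the proof of Theorem~\ref{thm:derivedV}, using reductivity of $G$ (and $\mathrm{char}\,k=0$) to commute the exact functor $(U_\lambda^* \otimes_k -)^G$ past homology.

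As for the main obstacle, there is essentially no new content here: the argument is a purely formal transport along $\iota$. The only point requiring a moment's care is the commutativity $(\iota A)_{\lambda,V}^G = \iota(A_{\lambda,V}^G)$, namely that forming $U_\lambda$-isotypical invariants commutes with truncation and inclusion. This holds because both operations are degreewise and $U_\lambda$ is concentrated in a single degree, so no genuine difficulty arises.
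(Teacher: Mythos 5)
Your proposal is correct and follows essentially the same route as the paper: the paper's own proof of this corollary is precisely ``deduce it from Theorem~\ref{thm:derivedlambdaV} in the same way that Corollary~\ref{cor:derivedV+} was deduced from Theorem~\ref{thm:derivedV}'', which is exactly the Brown's-lemma argument with transport along $\iota$ that you spell out (with the target categories correctly changed from commutative dg-algebras to dg vector spaces). The only difference is that you make explicit two points the paper leaves implicit, namely the commutativity $(\iota A)_{\lambda,V}^G = \iota(A_{\lambda,V}^G)$ and the fact that $\iota$ reflects quasi-isomorphisms; both are immediate, as you note.
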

\begin{proof} The proof follows from Theorem~\ref{thm:derivedlambdaV} in the same way as the proof of Corollary~\ref{cor:derivedV+} followed from Theorem~\ref{thm:derivedV}.
\end{proof}

\subsection{K-theoretic classes} 
\label{subsec:k-theoreticclasses}

We use the classical $G$-invariant subfunctor $(-)_V^G: \Alg_S \to \cAlg_k$ to define
\begin{Defn}
The \emph{partial character scheme} of an algebra $A \in \Alg_S$ in a vector space $V$, relative to a subgroup $G \subset G_S$, is the affine quotient of the representation scheme:
\begin{equation}
\Rep_V^G(A):= \Rep_V(A)\sslash G = \Spec(A_V^G) \in \Aff_k\, .
\end{equation}
\end{Defn}
The name is motivated by the fact that in the absolute case $S=k$ and $G=\GL(V)$ the full group, we would obtain the classical scheme of characters $\smash{\Rep_V^{\GL(V)}(A)}$. The derived version is:
\begin{Defn}
The \emph{derived partial character scheme} of $A \in \Alg_S$ in a vector space $V$, relative to a subgroup $G \subset G_S$, is the affine quotient of the derived representation scheme:
\begin{equation}
\DRep_V^G(A):= \DRep_V(A)\sslash G = \RR\Spec\big(\L (A )_V^G \big) \in \Ho(\DGAff_k)\, .
\end{equation}
\end{Defn}

Let us recall that the obvious inclusion $\smash{\Sch_k \to \DGSch_k}$ has for right adjoint the truncation functor $\pi_0 :\DGSch_k \to \Sch_k$ that associates to a dg-scheme $X=(X_0,\O_{X,\bullet})$ the closed subscheme $\smash{\pi_0(X):=\Spec(\HH_0(\O_{X,\bullet}))\subset X_0}$:
\begin{equation}
\adjunct{\Sch_k}{\DGSch_k}{}{\pi_0}\, .
\end{equation}
Because the differential $d:\O_{X,i} \to \O_{X,i-1}$ is $\O_{X_0}$-linear, the homologies $\HH_i(\O_{X,\bullet})$ are quasicoherent sheaves on $X_0$, and also on the closed subscheme $\pi_0(X) \subset X_0$.  We can put these data together in a dg-affine scheme:
\[
X_h := \big( \pi_0(X), \HH_\bullet( \O_{X,\bullet}) \big) \in \DGAff_k\, ,
\]
which in the affine case $X =\Spec(A)$ is nothing but $\Spec(\HH_\bullet(A))$.
\begin{Defn} [Definition 2.2.6. in \cite{CiKa2}] 
\label{def:finitetype}
A dg-scheme $X$ is \emph{of finite type} if $X_0$ is a scheme of finite type and each $\O_{X,i}$ is a coherent sheaf on $X_0$.
\end{Defn}

Let now come to the case of our interest, a dg-affine scheme of finite type $X=\Spec(B)$, for which the sheaves $\HH_i(\O_{X,\bullet})$ are coherent both over $X_0$ and over $\pi_0(X)= \Spec(\HH_0(B))$, therefore they define a class in the algebraic K-theory\footnote{By algebraic K-theory of a scheme we mean the Grothendieck ring of the abelian category of coherent sheaves on it.}
\begin{equation}
[\HH_i(\O_{X,\bullet})] \in K (\pi_0(X))\, .
\end{equation}
We first consider the derived scheme $X=\DRep_V(A)$. Let us assume that $A$ is an algebra such that, for each vector space $V$, the following two conditions are satisfied:
\begin{enumerate}
\item The derived representation scheme $X=\DRep_V(A)$ is of finite type.
\item The structure sheaf $\O_{X,\bullet}$ of the derived representation scheme is bounded, in the sense that $\O_{X,i}=0$ for $i \gg 0$. 
\end{enumerate}
This is true for all algebras that we consider in this article, as we show in \S~\ref{subsec:cofres} and \S~\ref{subsec:zerolocuskoszul}. The truncated scheme obtained from the derived representation scheme is the classical representation scheme, as explained in Remark~\ref{rmk:zerohom}:
\[
\pi_0(\DRep_V(A)) = \Rep_V(A)\, .
\]
By condition (1) each homology defines a coherent sheaf on $\pi_0(X)=\Rep_V(A)$ and therefore a class
\[
\big[ \HH_i(A,V) \big] \in K(\Rep_V(A)) \, .
\]
By condition (2) there is only a finite number of them nonzero, therefore in particular the following definition makes sense, because the sum in \eqref{euldrep} is bounded:
\begin{Defn}
The \emph{virtual fundamental class} - or Euler characteristic of the derived representation scheme $X=\DRep_V(A)$ is the following invariant in the $K$-theory of the classical representation scheme:
\begin{equation}
\label{euldrep}
[X]^\vir= \chi(A,V):= \sum\limits_{i=0 }^\infty (-1)^i \big[ \HH_i(A,V) \big] \in K(\Rep_V(A))=K(\pi_0(X))\, .
\end{equation}
\end{Defn}
This virtual fundamental class carries an action of the group $G$, which is reductive, and therefore it decomposes into a direct sum of its irreducible components. To formalise this we first consider the quotient by derived partial character scheme $\smash{X^G= \DRep_V^G(A)}$, whose truncation is $\pi_0(X^G) = \Rep_V^G(A)$. For each finite-dimensional irreducible representation $U_\lambda$ of $G$ we proved the existence of the derived functor of the corresponding component $\smash{\L(-)_{\lambda,V}^G: \Ho(\DGA_S^+) \to \Ho(\dgVect_k^+)}$ and observed that
\[
\HH_i \big( \L(A)_{\lambda,V}^G \big) \cong \big(U_\lambda^* \otimes \HH_i(A,V) \big)^G \in \Mod_{\HH_0(A,V)^G}\, ,
\]
and therefore they define coherent sheaves on $\Rep_V^G(A)$.
\begin{Defn}
The Euler characteristic of the $U_\lambda$-irreducible component of the derived partial character scheme is
\begin{equation}
\chi^\lambda(A,V) := \sum\limits_{i=0}^\infty (-1)^i [\HH_i(\L(A)_{\lambda,V}^G)] \in K(\Rep_V^G(A))\, .
\end{equation}
\end{Defn}
We observe that the irreducible component corresponding to the trivial representation $U_0=k$ is the virtual fundamental class of the derived partial character scheme, which we denote by
\[
\chi^G(A,V) =  \sum\limits_{i=0}^\infty (-1)^i [\HH_i(A,V)^G)] = [X^G]^\vir \in K(\Rep_V^G(A))\, .
\]

\subsection{T-equivariant enrichment}
\label{subsec:Tenrich}

So far we have worked only with a group $G\subset G_S \subset \GL(V)$ that acts on the representation scheme $\Rep_V(A)$ because of the standard action on the vector space $V$. However, often the algebra $A$ itself comes with an action of some algebraic torus $T$ which helps when calculating its invariants (for example the corresponding decomposition of $A$ might consist of finite dimensional weight spaces, allowing a graded dimensions count). In this section we explain how such an action $T \acts A$ induces a well-defined group scheme action $T \acts \DRep_V(A)$, in the sense that different models for the derived representation scheme are linked by $T$-equivariant quasi-isomorphism, and therefore their homologies (and all the other invariants, as the Euler characteristics introduced in \S~\ref{subsec:k-theoreticclasses}) carry a well-defined induced $T$-action. 

First we give a notion of a \emph{rational} $T$-action, for an algebraic group $T \in \Grp_k$ on any (dg,commutative) algebra. 

\begin{Defn}
\label{def:rationalaction}
Let $\CC$ be any of the following categories: $\smash{\dgVect_k, \DGA_S,\cDGA_k}$ or their non-negatively graded versions. A \emph{rational action} of an algebraic group $T$ over $k$ on an object $A \in \CC$ is a morphism of groups $\smash{\rho : T \to \Aut_\CC(A)}$ with the additional property that every element $a \in A$ is contained in a finite dimensional $T$-stable vector subspace $a \in V \subset A$ on which the induced action $T \to \GL_k(V)$ is a morphism of algebraic groups over $k$. We denote by $\CC^T$ the category with objects the objects in $\CC$ with a rational $T$-action and morphisms the equivariant morphisms.
\end{Defn}

This definition is motivated by the fact that the equivalence of categories \eqref{dgaff} enriches to an equivalence of categories between $\smash{(\cDGA_k^+)^T}$ and the (opposite) category of dg-affine schemes with a group scheme action of $T$. 

\begin{Remark} 
\label{rem:enrich}
If we denote by $\Tcat$ the one-object groupoid associated to the group $T$, then a rational action on an object in $\CC$ is just a functor $\Tcat \to \CC$ with some additional properties, and a $T$-equivariant morphism is a natural transformation of functors. Another way to say this is that we can view the category $\smash{\CC^T \subset [\Tcat,\CC] }$ as a full subcategory of the category of functors. If $\CC,\DD$ are two among the categories mentioned in~\ref{def:rationalaction}, and $F:\CC \to \DD$ is any functor between them, then we can consider the induced functor on the functor categories $F_*=F\circ (-):[\Tcat,\CC] \to [\Tcat,\DD]$. If this induced functor sends objects of $\CC^T\subset [\Tcat,\CC]$ into objects of $\DD^T$, then it restricts to a functor that we denote by $F^T :\CC^T \to \DD^T$. This is true whenever $F$ is defined purely in ``algebraic terms''\footnote{We leave intentionally this as an intuitive, not well-defined, notion.}, which is the case of all the functors we considered so far. The induced functor $F^T$ is an \emph{enrichment} of the functor $F$ in the sense that we can recover $F$ under the natural forgetful functors:
\begin{equation}
\label{enrich}
\begin{tikzcd}
  \CC^T  \arrow{r}{F^T} \arrow{d}{U}
    & \DD^T \arrow{d}{U}  \\
 \CC \arrow{r}{F}
&\DD 
 \end{tikzcd}
\end{equation}
\end{Remark}

It is easy to see from the definition of the representation functor that a rational action $T \acts A$ induces (as explained in Remark~\ref{rem:enrich}), an action $T \acts A_V$ which is still rational, and therefore a group scheme action $T\acts \Rep_V(A)$. To summarise the adjunction \eqref{quillenpair0} enriches to an adjunction:
\begin{equation}
\adjunct{\big(\DGA_S^+\big)^T}{\big(\cDGA_k^+\big)^T}{(-)_V}{\End(V)\otimes_k(-)}\, .
\end{equation}
We did not add a superscript $(-)^T$ to the enriched functors in this diagram because we want to avoid confusion with the same symbols used with a different meaning in \S~\ref{subsec:ginvariants}. 

From now on we restrict ourselves to the case of our interest in this paper of an algebraic torus $T =(k^\times)^r$. To do what we promised to do in the beginning of this section we need to prove that, roughly speaking, any $T$-equivariant algebra admits an equivariant cofibrant replacement in the model category $\DGA_S^+$, and that any two such equivariant cofibrant replacements produce quasi-isomorphic representation schemes. To do it we introduce a model structure on the category $\smash{(\DGA_S^+)^T}$ compatible with the model structures on $\DGA_S^+$ under the forgetful functor (in the following Theorem we explain in which sense these model structures are compatible). We recall that $\DGA_S^+$ is equipped with the projective model structure in which weak equivalences are quasi-isomorphisms and fibrations are surjections in positive degrees. We also observe that actually the category of $T$-equivariant dg-algebras over $S$ is $\smash{(\DGA_S^+)^T = S \downarrow (\DGA_k^+)^T}$ nothing else but the under category of $T$-equivariant dg-algebras over $k$ receiving a map from $S$ if we give $S$ the trivial action, and therefore we only need to give a model structure in the absolute case $S=k$.

\begin{Thm}
\label{thm:T-homtheory}
There exists a model structure on $\smash{(\DGA_k^+)^T}$ with the following properties:
\begin{enumerate}
\item Weak equivalences / fibrations are exactly the maps that are weak equivalences / fibrations under the forgetful functor $U: (\DGA_k^+)^T \to \DGA_k^+$ (and cofibrations are the maps with the left-lifting property with respect to acyclic fibrations defined in this way).
\item The forgetful functor preserves cofibrations.
\end{enumerate}
\end{Thm}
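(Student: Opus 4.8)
The plan is to reduce to the absolute case $S=k$ (as already observed above, since $\smash{(\DGA_S^+)^T = S\downarrow(\DGA_k^+)^T}$ with $S$ carrying the trivial action, so the under-category structure is inherited in the standard way) and then to exploit the fact that, for an algebraic torus $T=(k^\times)^r$ over the algebraically closed field $k$ of characteristic zero, a rational $T$-action is the same datum as a grading by the character lattice $\hat T \cong \Z^r$. Concretely, the weight-space decomposition supplies a symmetric monoidal equivalence between $\smash{(\dgVect_k^+)^T}$ and the category of $\hat T$-graded non-negatively graded complexes, under which the tensor product adds weights; passing to monoids, it identifies $\smash{(\DGA_k^+)^T}$ with the category of $\hat T$-graded dg-algebras. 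The problem thus becomes the construction of a projective model structure on dg-algebras carrying one extra grading by the abelian group $\hat T$.

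First I would build this model structure by transfer along the free/forgetful adjunction between $\hat T$-graded complexes and $\hat T$-graded dg-algebras, following verbatim the proof of the ungraded statement (Theorem 4 in \cite{BeFeRa}). The category of $\hat T$-graded non-negatively graded complexes is a cofibrantly generated symmetric monoidal model category whose weak equivalences are the weight-by-weight quasi-isomorphisms and whose fibrations are the surjections in positive homological degree; its generating (acyclic) cofibrations are the usual sphere/disk inclusions placed in a single weight $\chi\in\hat T$. The monoid axiom holds for exactly the same reason as in the ungraded case, since characteristic zero guarantees that pushouts of free maps on generating acyclic cofibrations remain weak equivalences, the extra $\hat T$-grading being inert relative to the homological structure. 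Hence the Schwede--Shipley transfer yields the desired model structure on $\hat T$-graded dg-algebras, in which weak equivalences and fibrations are detected on the underlying $\hat T$-graded complexes.

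It then remains to translate ``detected on the underlying $\hat T$-graded complexes'' into ``detected by the forgetful functor $U:(\DGA_k^+)^T \to \DGA_k^+$'', which takes the total space $\smash{\bigoplus_{\chi}V_\chi}$ and forgets the weights. For property (1): a weight-preserving map is a quasi-isomorphism if and only if it is one in each weight (homology respects the $\hat T$-grading because the differential does), which holds if and only if the total map is a quasi-isomorphism, as homology commutes with the direct sum over weights; the same direct-sum argument shows that surjectivity in a fixed homological degree may be tested on the total space. Thus the transferred weak equivalences and fibrations are precisely the maps sent by $U$ to weak equivalences and fibrations in $\DGA_k^+$. For property (2): $U$ commutes with all colimits (direct sums commute with colimits) and with the free dg-algebra functor (the tensor algebra of the total space is the total space of the graded tensor algebra, so $U F_{\hat T}=F\,U$), and it carries each generating cofibration of $\smash{(\DGA_k^+)^T}$ — a free algebra on a generator concentrated in one weight — to a generating cofibration of $\DGA_k^+$; therefore $U$ sends relative cell complexes to relative cell complexes and retracts to retracts, so it preserves cofibrations.

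The step I expect to be the main obstacle is property (2): one must pin down the generating cofibrations of the transferred structure as the free-algebra maps and verify that $U$ is compatible with the transfinite small-object-argument construction, so that the image under $U$ of a retract of a free extension is again such a retract. This is where the explicit description of cofibrations as retracts of free cell attachments, together with the compatibility of the total-space functor with free monoids and colimits, does the real work. By contrast the acyclicity hypothesis needed for the transfer is comparatively routine, being inherited weight by weight from the ungraded case of \cite{BeFeRa}.
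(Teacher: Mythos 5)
Your proposal is correct, but it takes a genuinely different route from the paper. Both proofs begin identically: reduce to the absolute case $S=k$ and identify a rational $T$-action with a grading by the character lattice $\hat T\cong\Z^r$ compatible with differential and product. From there the paper follows \cite{BeRa} and the explicit Felix--Halperin--Thomas-style construction: it introduces $T$-equivariant noncommutative Tate extensions and ``special extensions'' (free extensions by contractible two-term complexes $0\to k\,x\to k\,dx\to 0$ placed in a single weight), proves by hand the two factorization lemmas and the lifting properties (checking that the proof of Proposition 3.1 in \cite{FeHaTh} survives the extra grading), shows every acyclic cofibration is a retract of a special extension, and then verifies the model category axioms (MC1)--(MC5) directly; no cofibrant-generation or monoid-axiom machinery is invoked. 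Your route instead transfers the model structure along the free--forgetful adjunction from $\hat T$-graded complexes to $\hat T$-graded dg-algebras via Schwede--Shipley, which is legitimate in characteristic zero and buys you cofibrant generation of the resulting model structure for free --- something the paper's argument does not make explicit --- at the cost of importing the monoid axiom and the transfer theorem as black boxes, whereas the paper's proof yields concrete Tate-extension factorizations (useful as explicit cofibrant replacements). The one place where you make life harder than necessary is property (2), which you flag as the main obstacle and attack through a cell-complex analysis: the paper dispatches it in one line by observing that $U$ is \emph{left} adjoint to the functor $k[T]\otimes(-)$ (which visibly preserves fibrations and acyclic fibrations, since $U(k[T]\otimes p)$ is a direct sum of copies of $p$), so $U$ preserves cofibrations by the standard adjoint lifting argument; note that this adjunction is exactly the reason your claim ``$U$ commutes with all colimits'' is true, so you already had the slick argument within reach.
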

\begin{proof} We refer the reader to Appendix~\ref{app:A} for the proof of this Theorem.
\end{proof}

As a corollary of this result we can naturally equip the derived representation scheme of a $T$-equivariant algebra with a group scheme action of $T$. In fact, let $S \in \Alg_k$ and $\smash{(A \in \Alg_S)^T=S \downarrow (\Alg_k)^T}$ be a $T$-equivariant algebra. 

\begin{Cor}
There is a well-defined action $T\acts \DRep_V(A)$ which is compatible with the one on $\smash{\Rep_V(A) \cong \pi_0(\DRep_V(A))}$ induced by $T\acts A$.
\end{Cor}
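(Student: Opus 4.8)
The plan is to lift the construction of $\DRep_V(A)$ to the equivariant model structure of Theorem~\ref{thm:T-homtheory} and then transport the resulting rational action along the enriched representation functor. First I would regard $A$ as an object of $(\DGA_S^+)^T$, concentrated in degree $0$ with its given rational $T$-action (and $S$ given the trivial action, as in the discussion preceding Theorem~\ref{thm:T-homtheory}). Since that theorem supplies a model structure on $(\DGA_S^+)^T$, I may choose an equivariant cofibrant replacement $A_\cof \acfib A$ in it. The key point is that the forgetful functor $U$ detects weak equivalences and fibrations and, by part (2) of the theorem, preserves cofibrations; hence $U(A_\cof)$ is cofibrant in $\DGA_S^+$ and $U(A_\cof) \acfib A$ remains an acyclic fibration. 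Therefore $U(A_\cof)$ is an ordinary (non-equivariant) cofibrant replacement of $A$ and may legitimately be used to compute $\DRep_V(A) = \Spec((A_\cof)_V)$ as in \eqref{eq:drep}.

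Next I would transport the torus action. By Remark~\ref{rem:enrich} the functor $(-)_V$, being defined in purely algebraic terms, enriches to $(-)_V : (\DGA_S^+)^T \to (\cDGA_k^+)^T$ fitting into the square \eqref{enrich}. Applying it to the equivariant object $A_\cof$ equips $(A_\cof)_V$ with a rational $T$-action. Since $(\cDGA_k^+)^T$ is anti-equivalent to dg-affine schemes carrying a group scheme action of $T$ (the enrichment of \eqref{dgaff} noted after Definition~\ref{def:rationalaction}), this rational action is exactly a group scheme action $T \acts \DRep_V(A)$.

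The main obstacle is to show the action does not depend on the chosen equivariant replacement. Here I would enrich the Quillen pair \eqref{quillenpair0}: the right adjoint $\End(V)\otimes_k(-)$ preserves fibrations and acyclic fibrations because these are created by $U$ and the non-equivariant functor is right Quillen, so the enriched pair is again a Quillen pair and $(-)_V$ is left Quillen on $(\DGA_S^+)^T$. By Ken Brown's lemma it then sends weak equivalences between cofibrant objects to weak equivalences. Any two equivariant cofibrant replacements of $A$ are joined by an equivariant weak equivalence of cofibrant objects, so the associated $(A_\cof)_V$ are $T$-equivariantly quasi-isomorphic, and the induced $T$-action on the representation homology $\HH_\bullet(A,V)$ — and hence on all the invariants of \S~\ref{subsec:k-theoreticclasses} — is well defined. (Alternatively, one can argue directly from the $T$-equivariance of the polynomial homotopy produced in Lemma~\ref{lem:polhom}.) I expect this well-definedness, rather than the mere existence of the action, to be the step requiring the most care.

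Finally I would verify compatibility with $\pi_0$. The equivariant quasi-isomorphism $A_\cof \acfib A$ induces a $T$-equivariant isomorphism $\HH_0((A_\cof)_V) \cong A_V$ on zeroth homology, where the target carries precisely the rational action obtained by enriching $(-)_V : \Alg_S \to \cAlg_k$, i.e. the action on $\Rep_V(A)$ coming from $T \acts A$. Since $\pi_0(\DRep_V(A)) = \Spec(\HH_0((A_\cof)_V)) \cong \Rep_V(A)$ by Remark~\ref{rmk:zerohom}, the two actions agree, which is exactly the asserted compatibility.
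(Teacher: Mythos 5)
Your proposal is correct and follows essentially the same route as the paper: choose a $T$-equivariant cofibrant replacement via the model structure of Theorem~\ref{thm:T-homtheory}, use the fact that the forgetful functor detects weak equivalences/fibrations and preserves cofibrations to see it is also a non-equivariant cofibrant replacement, transport the action through the enriched functor $(-)_V$, and prove well-definedness by lifting $\id_A$ to a $T$-equivariant weak equivalence between any two equivariant replacements. The only (harmless) divergence is in the last step: the paper forgets the action and uses the identity $\L(\gamma f)_V = \gamma f_V$ for cofibrant objects to conclude that $f_V$ is a quasi-isomorphism (equivariance of $f_V$ being automatic from functoriality), whereas you stay inside the equivariant categories by upgrading \eqref{quillenpair0} to an equivariant Quillen pair and invoking Ken Brown's lemma --- both justifications are valid.
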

\begin{proof}
First of all, we can pick up a $T$-equivariant cofibrant replacement $\smash{Q \acfib A \in (\DGA_S^+)^T}$ using the model structure we just defined. Because of Theorem~\ref{thm:T-homtheory} (1) and (2), when we forget the $T$-action we still have a cofibrant replacement for $A$, therefore we can use this $Q$ as a model for $\DRep_V(A) = \Rep_V(Q)$. There is a natural $T$-action on this dg-scheme induced by $T \acts Q$, which is compatible with the one on its truncation $\smash{\pi_0(\Rep_V(Q)) \cong \Rep_V(A)}$.

To prove that the previous definition is well posed, we show that if $\smash{Q' \acfib A}$ is any another $T$-equivariant cofibrant replacement, then there is a $T$-equivariant quasi-isomorphisms of dg-schemes $\Rep_V(Q) \we \Rep_V(Q')$. In fact by the general machinery of model categories we can lift the identity map $1_A:A \to A $ to a $T$-equivariant (weak equivalence) between the two cofibrant replacements $f : Q \we Q'$. When we forget the $T$-action, this is still a weak equivalence, therefore giving an isomorphism $\gamma f$ in the homotopy category $\Ho(\DGA_S^+)$ and therefore $\L( \gamma f )_V$ is an isomorphism in $\Ho(\cDGA_k^+)$. But because both domain and codomain are cofibrant, $\L(\gamma f)_V = \gamma f_V$, and therefore $f_V : Q_V \to (Q')_V$ is a $T$-equivariant isomorphism of commutative dg-algebras, which dually gives the desired $T$-equivariant map $\Rep_V(Q)\we \Rep_V(Q')$.
\end{proof}

As a final consequence, the representation homology of a $T$-equivariant algebra, and all the other invariants defined in \S~\ref{subsec:k-theoreticclasses}, enrich to $T$-equivariant invariants. For example we can define the $T$-equivariant virtual fundamental class of the derived representation scheme $X=\DRep_V(A)$ as the following object in the equivariant $K$-theory of the classical representation scheme:
\begin{equation}
\label{Tvir}
[X]^\vir = \chi_T(A,V) := \sum\limits_{i =0}^{\infty} (-1)^i [ \HH_i(A,V)] \in K_T(\Rep_V(A))\, ,
\end{equation}
and also all the other $U_\lambda$-irreducible components for a reductive group $G$ by which we take the quotient (see \S~\ref{subsec:k-theoreticclasses}) as
\begin{equation}
\chi^\lambda_T(A,V) := \sum\limits_{i =0}^{\infty} [ \HH_i(\L(A)_{\lambda,V}^G)] \in K_T(\Rep_V^G(A)) = K_T(\Rep_V^G(A)\, .
\end{equation}
In particular for $U_0=k$ the trivial representation, we obtain an equivariant version of the virtual fundamental class of the derived partial character scheme $X^G=\DRep_V^G(A)$, which we denote by:
\begin{equation}
\chi^G_T(A,V) = \sum\limits_{i =0}^{\infty} [ \HH_i(A,V)^G] = [X^G]^\vir \in K_T(\Rep_V^G(A)) \, .
\end{equation}


\section{The case of Nakajima quiver varieties}
\label{sec:dgrepschemesfornakajma}
In this section we first recall the construction of Nakajima quiver varieties and secondly we construct some derived representation schemes related to them.


\subsection{Nakajima quiver varieties}
\label{subsec:generalitiesonnakajimaquivervarieties}
We already recalled in Example~\ref{ex:pathalg} that a finite quiver is a finite directed graph defined by its sets of vertices and edges $Q=(Q_0,Q_1)$ with two maps (source and target of an arrow) $s,t: Q_1 \to Q_0$. 

We first frame the quiver, this means that we add a new vertex for each old one with a new arrow from the new to the old. Then we double the framed quiver, in order to obtain a cotangent (symplectic) space when we consider its representations. We denote this quiver by $\smash{\overline{Q^{\fr}}}$.
\begin{figure} [htbp]   
\centering
\scalebox{0.9}{
\begin{tikzpicture}
\node (vertex0){ $\,\,\,\,\bullet \, \bm v$};
\path (vertex0) edge [very thick,->,out=135,in=45,looseness=5] node[above] {{\color{black}$\bm x$}} (vertex0);
\node [below= of vertex0, below=0.2](script) {Jordan quiver};
\node [right= of vertex0, right= 3cm](vertex){ $\,\,\,\,\bullet \, \bm v$};
\node [right= of vertex0, right= 0.3cm] (dummy0) {};
\node [left= of vertex, left= 0.3cm] (dummy) {};
\path (dummy0) edge[ ->, decorate, decoration=snake] (dummy);
\path (vertex) edge [very thick,->,out=135,in=45,looseness=5] node[above] {{\color{black}$\bm x$}} (vertex);
\path (vertex) edge [very thick,mygreen,->,out=35, in=145, looseness=9] node[above] {{\color{mygreen}$\bm y$}} (vertex);
\node [below= of vertex] (wertex) {{\color{myred} $\,\,\,\,\,\,\,\square\, \bm w$}};
\path (wertex) edge [very thick,myred, ->, out=90, in=270] node[right] {{\color{myred}$\bm i$}} (vertex);
\path (vertex) edge [very thick,mygreen, ->, out=230, in=130] node[left] {{\color{mygreen}$\bm j$}} (wertex);
\end{tikzpicture}
}
\caption{Example: {\color{myred} framing} and {\color{mygreen}doubling} the Jordan quiver. The framed vertices are usually denoted by a square symbol.}
\label{example}
\end{figure}
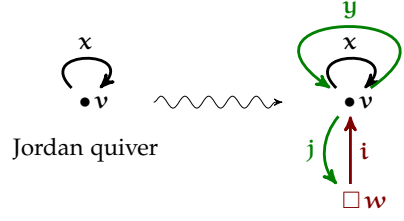
To consider representations of a framed (doubled) quiver, we need to fix two dimension vectors $\vv,\ww \in \N^{Q_0}$, and usually one assumes that (at least one of the components of) the framing vector is nonzero: $\ww\neq0$. 
\begin{Notation}
We denote the linear representations of the doubled, framed quiver by 
\begin{equation}
\label{repquiv}
M(Q,\vv,\ww):=L\left(\overline{Q^\fr},\vv,\ww\right) \cong \TT^*L\big(Q^\fr,\vv,\ww\big)\, .
\end{equation}
\end{Notation}
Explicitely it is the following cotangent linear space:
\begin{equation}
\begin{aligned}
M(Q,\vv,\ww) =\TT^* \left( \bigoplus\limits_{\gamma \in Q_1} \Hom_\C(\C^{v_{s(\gamma)}}, \C^{v_{t(\gamma)}})  \oplus \bigoplus_{a \in Q_0} \Hom_\C(\C^{w_a},\C^{v_a}) \right)\, .
\end{aligned}
\end{equation}
We denote elements of this space by quadruples $\smash{(X,Y,I,J)= (X_\gamma,Y_\gamma,I_a,J_a)_{\gamma,a}}$, where $X_\gamma, I_a \in L(Q^\fr,\vv,\ww)$ are elements of the representation space of the framed quiver, and $(Y_\gamma,J_a)$ are cotangent vectors to them. The gauge group is the general linear group on the set of vertices of the original quiver $Q$:
\begin{equation}
\label{Gv}
G=G_\vv := \prod\limits_{a \in Q_0} \GL_{v_a}(\C) \subset  \GL(\C^\vv\oplus \C^\ww)\, ,
\end{equation}
which acts by conjugation in a Hamiltonian fashion on $M(Q,\vv,\ww)$. The moment map for this action is 
\begin{equation}
\label{momentmap}
\begin{aligned}
\mu:  \,&M(Q,\vv,\ww) \to \lieg_\vv^* \cong \lieg_\vv \quad (\text{\small via trace})\\
&(X,Y,I,J) \longmapsto [X,Y]+IJ\, ,
\end{aligned}
\end{equation}
where in the above equation $[X,Y]+IJ$ is a shortened symbol for
\begin{equation}
\label{momentmap1}
[X,Y]+IJ= \left( \sum\limits_{\gamma: t(\gamma) =a} X_\gamma Y_\gamma - \sum\limits_{\gamma: s(\gamma)=a} Y_\gamma X_\gamma + I_a J_a \right)_{a\in Q_0} \in \bigoplus\limits_{a \in Q_0} \gl_{v_a}(\C) = \lieg_\vv\, .
\end{equation}
Nakajima varieties are defined as symplectic reductions of $M(Q,\vv,\ww)$ by this action. The affine Nakajima quiver variety is the geometric quotient:
\begin{equation}
\Mo(Q,\vv,\ww):= \mu^{-1}(0) \sslash G = \Spec \big( \O( \mu^{-1}(0))^G \big) \, .
\end{equation}
The GIT Nakajima variety is instead given by the choice of a character $\chi \in \Hom_{\Grp_\C}(G,\C^\times)$ as the proj of the graded ring of $\chi$-quasiinvariant functions on $\mu^{-1}(0)$:
\begin{equation}
\M^\chi(Q,\vv,\ww) = \mu^{-1}(0) \sslash_\chi G = \Proj \big( \O(\mu^{-1}(0))^{G,\chi} \big) 
\end{equation}
(elements of degree $n\geq 0$ of $\smash{\O(\mu^{-1}(0)^{G,\chi})}$ are functions $f\in \O(\mu^{-1}(0))$ with the property $\smash{f(g\cdot p) = \chi^n(g) f(p)}$ for all $g \in G$ and $p \in \mu^{-1}(0)$). The inclusion of $G$-invariant functions as degree zero elements of the graded ring of $\chi$-quasiinvariant functions $\O(\mu^{-1}(0))^G \subset \O(\mu^{-1}(0))^{G,\chi}$ induces a projective morphism:
\begin{equation}
p : \M^\chi(Q,\vv,\ww) \to \Mo(Q,\vv,\ww)\, ,
\end{equation}
which is often a symplectic resolution of singularities. Sometimes we denote these varieties simply by $\M^\chi,\Mo$ implicitly fixing the quiver $Q$, and the dimension vectors $\vv,\ww$.


\subsection{Derived representation schemes models} 
In Proposition~\ref{connection2} we showed how the linear space of representations of a quiver is isomorphic to the representation scheme for its path algebra. The same thing holds for the doubled, framed quiver so that
\begin{equation}
\label{linrep}
M(Q,\vv,\ww) = L\left( \overline{Q^\fr},\vv,\ww\right) \cong \Rep_{\vv,\ww} \left(\C \overline{Q^\fr}\right)\, .
\end{equation}
To obtain the zero locus of the moment map, we consider the $2$-sided ideal $\smash{\mathcal{I}_\mu \subset \C \overline{Q^\fr} }$ generated by the $|Q_0|$-elements of the path algebra described in~\eqref{momentmap}, and consider the quotient algebra
\begin{equation}
A := \C \overline{Q^\fr}  \slash \mathcal{I}_\mu \in \Alg_S\, ,
\end{equation}
relative to the subalgebra $S \subset A$ of idempotents, with fixed representation $\smash{\rho =\rho_{\vv,\ww} :S \to \End_\C(\C^\vv\oplus \C^\ww)}$ (as in~\eqref{rhov}). The following result is an immediate consequence of the fact that taking the quotient by some ideal amounts simply to impose these new relations in the representation scheme (see Examples~\ref{Exs1}.(6)):
\begin{Prop}
\label{prop:connection3}
The zero locus of the moment map $\mu$ is the (relative) representation scheme for the path algebra of the framed, doubled quiver, modulo the Hamiltonian relation:
\begin{equation} 
\label{mu0=rep}
\mu^{-1}(0) \cong \Rep_{\vv,\ww} \big(\C \overline{Q^\fr} \slash \mathcal{I}_\mu \big)\, .
\end{equation}
\end{Prop}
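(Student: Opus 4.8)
The plan is to deduce the isomorphism from two facts already in place: the identification $M(Q,\vv,\ww)\cong\Rep_{\vv,\ww}(\C\overline{Q^\fr})$ of \eqref{linrep} (the framed, doubled analogue of Proposition~\ref{connection2}), and the general principle that quotienting an algebra by a two-sided ideal carves out the corresponding closed subscheme of its representation scheme (Examples~\ref{Exs1}.(6)). I would run the argument at the level of functors of points and invoke Yoneda. For $B\in\cAlg_\C$, a $B$-point of $\Rep_{\vv,\ww}(A)$ with $A=\C\overline{Q^\fr}/\mathcal{I}_\mu$ is, by the relative definition \eqref{relrep}, an $S$-algebra map $\C\overline{Q^\fr}/\mathcal{I}_\mu\to\End_\C(\C^\vv\oplus\C^\ww)\otimes_\C B$; since a map out of a quotient is exactly a map out of $\C\overline{Q^\fr}$ annihilating $\mathcal{I}_\mu$, and since an algebra homomorphism kills a two-sided ideal as soon as it kills a generating set, this amounts to an $S$-algebra map $\phi:\C\overline{Q^\fr}\to\End_\C(\C^\vv\oplus\C^\ww)\otimes_\C B$ annihilating the $|Q_0|$ generators $\mu_a$ of $\mathcal{I}_\mu$, one per original vertex.

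The key computation is then to check that, under the bijection $\Hom_{\Alg_S}(\C\overline{Q^\fr},\End_\C(\C^\vv\oplus\C^\ww)\otimes_\C B)\cong M(Q,\vv,\ww)(B)$ from \eqref{linrep}, which sends $\phi$ to the quadruple $(X,Y,I,J)$ of its values on the arrows, the image $\phi(\mu_a)$ of the $a$-th generator equals the $a$-th component $\sum_{\gamma:t(\gamma)=a}X_\gamma Y_\gamma-\sum_{\gamma:s(\gamma)=a}Y_\gamma X_\gamma+I_a J_a$ of the moment map in \eqref{momentmap1}. Granting this, the condition $\phi(\mu_a)=0$ for all $a\in Q_0$ is exactly $\mu(X,Y,I,J)=0$, so the functor of points of $\Rep_{\vv,\ww}(A)$ is identified with that of the closed subscheme $\mu^{-1}(0)\subset M(Q,\vv,\ww)$, and Yoneda upgrades this to the scheme isomorphism \eqref{mu0=rep}.

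The main care goes into that computation and the attendant bookkeeping with the idempotents. One must remember that $\mu$ takes values only in $\lieg_\vv=\oplus_{a\in Q_0}\gl_{v_a}(\C)$ and is indexed by the \emph{original} vertices, so each $\mu_a$ lies in $e_a(\C\overline{Q^\fr})e_a$ and $\phi(\mu_a)$ lands in $\End_\C(\C^{v_a})\otimes_\C B$; one then matches concatenation of paths with composition of linear maps, tracking the orientation convention for the doubled arrows $Y_\gamma$ and the framing contributions $I_a J_a$. A secondary point is that Examples~\ref{Exs1}.(6) was stated in the absolute case $S=k$, so strictly one should note the same holds relative to $S$: by \eqref{reladj} the functor $(-)_{\vv,\ww}$ is a left adjoint, hence preserves colimits, and in particular sends the quotient $\C\overline{Q^\fr}\twoheadrightarrow A$ to a surjection of commutative algebras whose kernel is generated by the matrix entries of the images of the $\mu_a$. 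This is what makes the identification scheme-theoretic rather than merely set-theoretic, and the functor-of-points argument above already accounts for it automatically.
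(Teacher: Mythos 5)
Your proposal is correct and follows the same route as the paper, which simply records the proposition as an immediate consequence of the identification \eqref{linrep} and the principle of Examples~\ref{Exs1}.(6) that quotienting by a two-sided ideal imposes the corresponding relations on the representation scheme. Your functor-of-points argument, the verification that $\phi(\mu_a)$ matches the $a$-th component of \eqref{momentmap1}, and the remark on extending Examples~\ref{Exs1}.(6) to the relative setting are exactly the details the paper leaves implicit.
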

\begin{Notation}
We denote the corresponding derived representation scheme and representation homology by:
\begin{equation}
\begin{aligned}
&\DRep_{\vv,\ww} (A)=\Spec\big( \L\big(A \big)_{\vv,\ww} \big) \in \Ho(\DGAff_\C)  \, ,\\
&\HH_\bullet(A, \vv,\ww ) = \HH_\bullet\big(\L \big( A\big)_{\vv,\ww}  \big) \in \cDGA^+_\C\, .
\end{aligned}
\end{equation}
The representation homology $ \HH_\bullet(A,\vv,\ww)$ is a graded commutative algebra, so when we view it in $\cDGA^+_\C$ we mean that the differential is zero.
\end{Notation}
Remark~\ref{rmk:zerohom}, together with Proposition~\ref{prop:connection3} tells us that the $\pi_0$ of this derived scheme $X=\DRep_{\vv,\ww}(A)$ is the zero locus of the moment map:
\begin{equation}
\label{h0=mu0}
\pi_0(X) = \Spec \big( \HH_0\big(A,\vv,\ww \big) \big) \cong \mu^{-1} (0)\, .
\end{equation}
In particular when we consider the invariant subfunctor only by the gauge group on the original vertices $G$ \eqref{Gv}:
\begin{Cor}
\label{cor:derivedM0}
The $\pi_0$ of the partial character scheme $X^G=\DRep_{\vv,\ww}^G (A)$ is the affine Nakajima variety $\Mo$:
\begin{equation}
\label{hoG=mo}
\pi_0(X^G)= \pi_0(\DRep^G_{\vv,\ww}(A)) \cong  \Mo\, .
\end{equation}
\end{Cor}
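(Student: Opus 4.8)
The plan is to reduce the whole statement to a computation in degree-zero homology and then chain together identifications that are already available. By definition $X^G = \DRep_{\vv,\ww}^G(A) = \RR\Spec(\L(A)_{\vv,\ww}^G)$, and since every object of $\cDGA_\C^+$ is fibrant the derived spectrum agrees with $\Spec$, so the truncation is $\pi_0(X^G) = \Spec(\HH_0(\L(A)_{\vv,\ww}^G))$. Thus everything comes down to identifying the commutative algebra $\HH_0(\L(A)_{\vv,\ww}^G)$ with the ring of functions $\O(\mu^{-1}(0))^G$ that defines the affine Nakajima variety.

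First I would invoke Corollary \ref{cor:derivedV+}(b) (the non-negatively graded version of Theorem \ref{thm:derivedV}(b)), which supplies a natural isomorphism $\HH_\bullet[\L(A)_{\vv,\ww}^G] \cong \HH_\bullet(A,\vv,\ww)^G$ in every degree. Specialising to degree zero gives $\HH_0[\L(A)_{\vv,\ww}^G] \cong \HH_0(A,\vv,\ww)^G$. The essential input here is that $G$ is reductive and $\mathrm{char}(\C)=0$, so the functor of $G$-invariants is exact and commutes with passage to homology; consequently the right-hand side is genuinely the $G$-invariants $(\HH_0(A,\vv,\ww))^G$ of the zeroth representation homology.

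Next I would identify that zeroth homology. By Remark \ref{rmk:zerohom} we have $\HH_0(A,\vv,\ww) \cong A_{\vv,\ww} = \O(\Rep_{\vv,\ww}(A))$, and Proposition \ref{prop:connection3} identifies $\Rep_{\vv,\ww}(A)$ with $\mu^{-1}(0)$, so $\HH_0(A,\vv,\ww) \cong \O(\mu^{-1}(0))$, which is precisely equation \eqref{h0=mu0}. Taking $G$-invariants and concatenating the isomorphisms yields $\HH_0(\L(A)_{\vv,\ww}^G) \cong \O(\mu^{-1}(0))^G$, whence $\pi_0(X^G) \cong \Spec(\O(\mu^{-1}(0))^G) = \mu^{-1}(0) \sslash G = \Mo$, which is \eqref{hoG=mo}.

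I do not anticipate a serious obstacle: once the derived invariant functor and its compatibility with the $G$-invariants of homology (Corollary \ref{cor:derivedV+}) are in place, this is a formal composition of isomorphisms. The one point deserving care is the exactness of $(-)^G$, which is what lets the degree-zero piece of $\HH_\bullet[\L(A)_{\vv,\ww}^G]$ be identified with the GIT ring $\O(\mu^{-1}(0))^G$ rather than with some a priori larger or smaller subspace; this is exactly what reductiveness of $G$ in characteristic zero guarantees, and it is already built into the cited results.
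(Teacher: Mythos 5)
Your proof is correct and follows essentially the same route as the paper: both reduce the statement to degree-zero homology, identify $\HH_0(\L(A)_{\vv,\ww}^G)$ with $\HH_0(A,\vv,\ww)^G$ via the derived invariants theorem (you cite Corollary~\ref{cor:derivedV+}, the paper cites Theorem~\ref{thm:derivedV}; these are interchangeable here), and then conclude via \eqref{h0=mu0} that this is $\O(\mu^{-1}(0))^G$, whose spectrum is $\Mo$. Your version is merely more explicit about the intermediate steps (fibrancy of objects in $\cDGA_\C^+$, exactness of $(-)^G$ by reductiveness), all of which the paper leaves implicit.
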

\begin{proof} 
It follows directly from the previous observation \eqref{h0=mu0} and the Theorem~\ref{thm:derivedV}. More precisely:
\[
\pi_0(X^G) \cong \Spec \big( \HH_0 ( A, \vv,\ww )^G\big) \cong  \Spec \big( \O (\mu^{-1}(0))^{G} \big)  =\mu^{-1}(0) \sslash G = \Mo\, .
\]
\end{proof}

\subsection{K-theoretic classes in the affine Nakajima variety} 
\label{subsec:vfc}

In \S~\ref{subsec:cofres} we describe an explicit cofibrant resolution for our algebra $\smash{A=\C \overline{Q^\fr}}$, $\smash{A_\cof \acfib A }$ and therefore a model for the derived representation scheme $\smash{\DRep_{\vv,\ww}\big( A  \big)=  \Rep_{\vv,\ww} \big(  A_\cof \big)}$, but we can already use Corollary~\ref{cor:derivedM0} to define some interesting invariants in the K-theory of $\Mo=\Rep_{\vv,\ww}^G(A)$. Throughout this section we denote by $\smash{X=\DRep_{\vv,\ww}(A)}$ the derived representation scheme and by $\smash{X^G=\DRep^G_{\vv,\ww}(A)}$ the corresponding partial character scheme, whose $\pi_0(X^G)=\Mo$ is the affine Nakajima variety.

There is a torus, the (standard) maximal torus of the gauge group on the framing vertices $T_\ww \subset G_\ww$ acting on the linear space of representations $\Rep_{\vv,\ww}(A)$, and therefore as explained in \S~\ref{subsec:k-theoreticclasses} it induces an action $T_\ww \acts \DRep_{\vv,\ww}(A)$ and on its quotient by the gauge group $G_\vv$: $\smash{T_\ww \acts \DRep_{\vv,\ww}^{G_\vv}(A)}$. There is an additional ($2$-dimensional) torus $\smash{T_\hbar=(\C^\times)^2 \acts A}$ acting rationally on the path algebra of the doubled framed quiver. This action can be described by assigning, respectively, the following $\Z^2$-weights to the arrows $(x_\gamma,y_\gamma,i_a,j_a)$ (see \S~\ref{subsec:generalitiesonnakajimaquivervarieties} to recall the name of the arrows): $(1,0)$, $(0,1)$,$(1,1)$,$(0,0)$, or explicitly as $x_\gamma \mapsto \hbar_1 x_\gamma$, $y_\gamma \mapsto \hbar_2 y_\gamma $, $i_a \mapsto \hbar_1\hbar_2 i$, $j_a \mapsto j_a$. As explained in \S~\ref{subsec:Tenrich}, also this torus induces actions $\smash{T_\hbar \acts\DRep_{\vv,\ww}(A), \DRep_{\vv,\ww}^{G_\vv}(A)}$. In other words, the whole torus $T:=T_\ww \times T_\hbar $ acts on the derived representation scheme $\smash{X=\DRep_{\vv,\ww}(A)}$ and its partial character scheme $\smash{X^{G_\vv}= \DRep_{\vv,\ww}^{G_\vv}(A)}$.

Using the definitions we gave in \S~\ref{subsec:k-theoreticclasses} and \S~\ref{subsec:Tenrich} we obtain the following invariants in the (equivariant) $K$ theory of the affine Nakajima variety $\Mo= \Rep_{\vv,\ww}^{G_\vv}(A)$, for example the virtual fundamental class 
\begin{equation}
\label{vfc}
\big[X^{G_\vv} \big]^{\vir} = \sum\limits_{i =0}^{\infty}(-1)^i \big[ \HH_i (A, \vv,\ww)^{G_\vv}  \big] \in K_T\big(   \Mo \big) \, .
\end{equation}
More generally for each irreducible representation $U_\lambda $ of $G_\vv$, the Euler characterstic of the corresponding isotypical component as
\begin{equation}
\label{virtuallambda}
\chi_T^\lambda(A,\vv,\ww) =  \sum\limits_{i = 0}^{\infty} (-1)^i \big[ \HH_i(\L(A))_{\lambda,\vv,\ww}^{G_\vv}  \big] \in K_T\big( \Mo \big)\, .
\end{equation}


\subsection{Explicit cofibrant resolution}
\label{subsec:cofres}
In this section we describe an explicit cofibrant resolution for the $S$-algebra $A$ constructed in the previous Section. Let us recall that
\begin{equation}
\label{nakalg}
A = \C  \overline{Q^\fr}   \slash \mathcal{I}_\mu \in \Alg_S \hookrightarrow \DGA^+_S\, ,
\end{equation}
where $S$ is the subalgebra generated by the idempotents of the path algebra of the framed quiver. The main obstruction for this object to be cofibrant is the Hamiltonian relation described by the ideal $\mathcal{I}_\mu$. The simplest idea is then to add one more variable for each of the generating relations in $\mathcal{I}_\mu$ which kills the relation itself. This technique in general might not work due to higher homologies, but we prove that this case is one of the well-behaved cases.
We construct the following quiver $Q^\vartheta$, which is obtained by adding to the framed, doubled quiver $\smash{\overline{Q^\fr}}$, one loop called $\vartheta_a$ on each original vertex $a \in Q_0$. 

In the path algebra $\C Q^\vartheta$ we assign homological degree $0$ to the original arrows, and homological degree $1$ to the new arrows $\vartheta_a$. The differential is induced by the moment map (equations as in \eqref{momentmap1})
\[
d \vartheta_a = \mu_a (x,y,i,j) \, .
\]
We denote the resulting differential graded algebra by
\begin{equation}
\label{acof}
A_\cof := (\C Q^{\vartheta},d)  \in \DGA_\C^+\, .
\end{equation}
It sits in the following diagram
\begin{equation}
\label{cofres}
 \begin{tikzcd}[column sep=small]
& S \arrow[dl]{}{\iota} \arrow[dr]{}{\varphi} & \\
  A_\cof \arrow{rr}{\pi} &                         & A
\end{tikzcd}
 \quad
\end{equation}
where $\pi$ is the composition of the following two obvious projections:
\[
\pi: A_\cof \to (A_\cof)_0 = \C \overline{Q^{\fr} }  \to \C \overline{Q^\fr}  \slash \mathcal{I}_\mu = A\, .
\]
\begin{Thm}
\label{thm:cofrep}
$A_\cof$ is a cofibrant replacement for $A$ in $\DGA_S^+$.
\end{Thm}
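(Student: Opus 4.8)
The plan is to establish separately the two properties that characterise a cofibrant replacement: that $A_\cof$ is a cofibrant object of $\DGA_S^+$, and that the projection $\pi$ of \eqref{cofres} is a weak equivalence, i.e.\ a quasi-isomorphism.

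For cofibrancy I would argue that $A_\cof$ is a \emph{semifree} extension of $S$. Indeed $\C Q^\vartheta = T_S(W)$ is the tensor $S$-algebra on the graded $S$-bimodule $W = W_0 \oplus W_1$, where $W_0$ is spanned by the degree-$0$ arrows $x_\gamma, y_\gamma, i_a, j_a$ and $W_1 = \bigoplus_{a \in Q_0} \C\,\vartheta_a$ sits in degree $1$. The inclusion $S \to \C\overline{Q^\fr} = T_S(W_0)$ is a cofibration because $T_S(W_0)$ is free over $S$ on generators in degree $0$, and $\C\overline{Q^\fr} \to A_\cof$ is obtained by freely adjoining the $\vartheta_a$ along the degree-$0$ cycles $d\vartheta_a = \mu_a$ (these are cycles since the differential vanishes in degree $0$). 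The latter map is a pushout of the coproduct of those generating cofibrations of the projective model structure on $\DGA_S^+$ that attach a degree-$1$ generator killing a degree-$0$ cycle; hence the composite $S \to A_\cof$ is a cofibration and $A_\cof$ is cofibrant.

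For the weak equivalence, the statement on $H_0$ is immediate: the degree-$1$ part $(A_\cof)_1$ is spanned by the paths containing a single $\vartheta_a$, and $d(p\,\vartheta_a\,q) = p\,\mu_a\,q$, so $\im(d_1)$ is exactly the two-sided ideal $\mathcal{I}_\mu$ and $H_0(A_\cof) = \C\overline{Q^\fr}/\mathcal{I}_\mu = A$, with $\pi$ inducing this isomorphism. The substance of the theorem, and what I expect to be the main obstacle, is the vanishing $H_i(A_\cof) = 0$ for all $i \geq 1$. Here I would identify $A_\cof$ with the noncommutative Koszul (Shafarevich) complex attached to the $S$-bimodule map $\mu : \bigoplus_a \C\,\vartheta_a \to \C\overline{Q^\fr}$, $\vartheta_a \mapsto \mu_a$; acyclicity in positive degrees is then equivalent to the relations $\{\mu_a\}_{a \in Q_0}$ of \eqref{momentmap1} forming a \emph{strongly free} (inert) sequence in the path algebra $\C\overline{Q^\fr}$.

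To prove strong freeness I would pass to an associated graded. Fixing an admissible monomial order on $\C\overline{Q^\fr}$ in which the framing letters $i,j$ dominate $x,y$, the leading monomial of each relation $\mu_a = \sum_\gamma [x_\gamma,y_\gamma]_a + i_a j_a$ is the framing loop $i_a j_a$. Crucially, and this is where the framing does the work purely at the level of the algebra, independently of the dimension vectors $\vv,\ww$, the monomials $\{i_a j_a\}_a$ are pairwise disjoint reduced words with no self-overlaps, hence combinatorially free, so they form a strongly free set. The induced filtration makes $\mathrm{gr}\,A_\cof$ the Koszul complex with differential $\vartheta_a \mapsto i_a j_a$, which is acyclic in positive degrees, and the associated spectral sequence, which converges in each fixed homological and internal (path-length) bidegree, transports this acyclicity back to $A_\cof$. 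The delicate point to check carefully is precisely this combinatorial freeness of the leading terms together with the convergence of the comparison, since it is exactly the feature that fails for the unframed preprojective relations of a Dynkin quiver, where higher generators would be forced.
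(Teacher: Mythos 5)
Your proposal is correct, and its two halves line up with the paper's own decomposition of the theorem into Lemma~\ref{lem:cof} (cofibrancy) and Lemma~\ref{lem:acfib} (acyclic fibration), but the crucial acyclicity step is justified by a genuinely different device. On cofibrancy, your cell-attachment description (degree-$0$ tensor generators over $S$, then freely adjoined degree-$1$ variables killing degree-$0$ cycles) is a repackaging of the paper's lifting argument for the tensor algebra $T_S M$; both implicitly use semisimplicity of the vertex ring $S$ to lift generators as $S$-bimodule maps, so they are at the same level of rigor. On the vanishing of higher homology, both arguments filter so that the only surviving part of the differential is $\vartheta_a \mapsto i_a j_a$ — the paper by counting $x,y$-letters, you by a monomial order in which the framing letters dominate — but from there the routes diverge: you invoke Anick's theory of strongly free (inert) sets, namely that combinatorially free leading monomials force strong freeness, equivalently acyclicity of the noncommutative Koszul (Shafarevich) complex in positive degrees; the paper instead stays elementary and self-contained, decomposing into the path spaces $P_{a,b}$, substituting each pair $i_a, j_a$ by a single loop $c_a$, decomposing further into word structure, and contracting the only nontrivial building block $\big(k\langle \vartheta, c\rangle,\, d\vartheta = c\big)$ by an explicit homotopy. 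Your route is precisely the alternative ``NCCI / partial preprojective algebra'' flavour that the paper acknowledges, without carrying it out, in the Remark closing \S~\ref{subsec:zerolocuskoszul}, with references \cite{CrEtGi} and \cite{EtGi}. What your approach buys is brevity and a structural explanation: the framing terms make the moment map relations inert for \emph{any} quiver, purely at the level of the path algebra and before any dimension vector is chosen. The cost is importing Anick's criterion together with its extension from free algebras over a field to tensor algebras over the semisimple base $S$ — a point you should cite explicitly (it is covered by the quoted NCCI literature), whereas the paper's proof needs nothing outside the paper itself.
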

 This amounts to prove that, in the diagram \eqref{cofres}, the map $\pi$ is an acyclic fibration, and $\iota$ is a cofibration.
\begin{Lem} 
\label{lem:acfib}
The map $\pi : A_\cof \to A$ is an acyclic fibration in $\DGA_\C^+ $.
\end{Lem}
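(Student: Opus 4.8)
The plan is to verify separately the two properties defining an acyclic fibration in $\DGA_\C^+$: that $\pi$ is a fibration (surjective in every strictly positive homological degree) and that it is a quasi-isomorphism.

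The fibration property is immediate: since $A = \C\overline{Q^\fr}/\mathcal{I}_\mu$ is concentrated in homological degree $0$, we have $A_i = 0$ for $i \geq 1$, so $\pi_i\colon (A_\cof)_i \to A_i$ is vacuously surjective in all positive degrees. For the quasi-isomorphism, observe first that the homological grading on $A_\cof = \C Q^\vartheta$ counts the number of $\vartheta$-letters occurring in a path, so $(A_\cof)_0 = \C\overline{Q^\fr}$; by the graded Leibniz rule and $d\vartheta_a = \mu_a$ one gets $d(p\,\vartheta_a\,q) = \pm\, p\,\mu_a\,q$ for degree-$0$ paths $p,q$, whence the image of $d_1$ is exactly the two-sided ideal $\mathcal{I}_\mu$. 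Therefore $\HH_0(A_\cof) = \C\overline{Q^\fr}/\mathcal{I}_\mu = A$, and by construction $\pi$ realises this identification. So the content of the lemma reduces to the acyclicity statement $\HH_n(A_\cof) = 0$ for all $n \geq 1$.

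To prove this I would first introduce the weight grading assigning weight $1$ to each ordinary arrow and weight $2$ to each loop $\vartheta_a$; since every $\mu_a$ is quadratic, the differential preserves weight, so $A_\cof$ decomposes into finite-dimensional subcomplexes $A_\cof^{(w)}$ and the homology can be computed one weight at a time. On these pieces the required vanishing is precisely the assertion that the family $\{\mu_a\}_{a\in Q_0}$ is a strongly free (regular) sequence in the path algebra $\C\overline{Q^\fr}$: adjoining one degree-$1$ generator per relation yields a resolution exactly when there are no hidden syzygies among the $\mu_a$. I would establish strong freeness by exhibiting an admissible monomial order on paths for which the $\mu_a$ form a Gröbner basis with non-overlapping leading monomials --- choosing the framing monomial $i_a j_a$ as leading term of $\mu_a$ whenever $w_a > 0$, and one of the quadratic monomials $x_\gamma y_\gamma$ or $y_\gamma x_\gamma$ otherwise --- and then checking, in the sense of Bergman's diamond lemma, that these leading terms admit no overlap ambiguities. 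A set of pairwise non-overlapping monomials is trivially strongly free, and strong freeness passes from the initial (leading-term) algebra back to $A_\cof$, giving the acyclicity.

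The main obstacle is this last verification: confirming that some global choice of leading monomials for all the moment-map relations is genuinely overlap-free, in full generality and independently of the dimension vectors. The delicate point is that the commutator monomials $x_\gamma y_\gamma$ and $y_\gamma x_\gamma$ can in principle interlock across distinct vertices when a vertex carries no framing. The structural features I expect to rule this out are the asymmetry of the construction --- the Hamiltonian relations are imposed only at the original vertices, never at the framing ones --- together with the doubling, which keeps the $x$- and $y$-type generators formally distinct; these are precisely the features that separate the present situation from the unframed Dynkin case, where the naive resolution can fail.
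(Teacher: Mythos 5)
Your route is legitimate and genuinely different from the paper's. The paper proves acyclicity directly: it decomposes $A_\cof$ into the subalgebras $P_{a,b}$ of paths with fixed endpoints, replaces each pair $i_a,j_a$ by the cycle $c_a=i_aj_a$, filters by the number of $x,y$-letters so that the associated graded differential becomes $\vartheta_a\mapsto c_a$, and then splits the associated graded into elementary blocks $\bigl(k\langle\vartheta,c\rangle,\ d\vartheta=c\bigr)$, each acyclic in positive degrees by an explicit homotopy. Your plan --- acyclicity of $A_\cof$ is equivalent to strong freeness of $\{\mu_a\}_{a\in Q_0}$, which is then checked by Anick's criterion of combinatorially free leading monomials via the diamond lemma --- is precisely the noncommutative-complete-intersection alternative that the paper itself points to in the remark closing \S~\ref{subsec:zerolocuskoszul}, with \cite{CrEtGi} and \cite{EtGi} as the right sources for the path-algebra (rather than free-algebra) versions of those results. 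The fibration and $\HH_0$ steps in your proposal coincide with the paper's.

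The genuine problem with your proposal is where it stops: the ``main obstacle'' you defer is not a real obstacle, and leaving it unresolved is a gap that stems from a misreading of the setup. In this paper the framing is part of the \emph{quiver}, not of the dimension data: $\overline{Q^\fr}$ carries framing arrows $i_a,j_a$ at \emph{every} original vertex $a\in Q_0$, so every generator $\mu_a=\sum_{t(\gamma)=a}x_\gamma y_\gamma-\sum_{s(\gamma)=a}y_\gamma x_\gamma+i_aj_a$ of $\mathcal{I}_\mu$ contains the framing term. The dimension vectors $(\vv,\ww)$ --- in particular whether some $w_a=0$ --- enter only later, when the functor $\Rep_{\vv,\ww}$ is applied; Lemma~\ref{lem:acfib} is a statement about path algebras and is independent of them. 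Hence your ``otherwise'' branch never occurs: take $i_aj_a$ as the leading monomial of $\mu_a$ for every $a$ (any degree-lexicographic order ranking the letters $i_\bullet,j_\bullet$ above all $x_\bullet,y_\bullet$ realises this), and these $|Q_0|$ words are pairwise non-overlapping and subword-free for trivial reasons: each letter occurs in exactly one of them, and $i_a\neq j_a$ excludes self-overlaps. With this observation your argument closes. Your instinct that genuinely unframed vertices are dangerous is sound in general --- for the unframed doubled $A_2$ quiver, with $d\vartheta_1=-yx$ and $d\vartheta_2=xy$, the element $\vartheta_1yx+y\vartheta_2x$ is a cycle which is not a boundary, so $\HH_1\neq 0$ and no such naive resolution exists --- and this is exactly why both proofs hinge on the term $i_aj_a$: it makes the paper's building blocks acyclic, and it hands you overlap-free leading terms. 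In the present setting that term is always there, so the difficulty you flagged is moot.
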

\begin{proof}
We need to prove that:
\begin{itemize}
\item [(i)] $\pi$ is degreewise surjective in degrees $\geq 1$ (this is obvious, because $A$ is concentrated in degree $0$).
\item [(ii)] $\HH_i(\pi) : \HH_i (A_\cof) \to \HH_i(A)$ is an isomorphism for each $i \geq 0 $, which becomes proving that
\[
\begin{cases}
\HH_0(\pi) : \HH_0(A_\cof) \xrightarrow{\sim} A \, ,\\
\HH_i (A_\cof) =0, \quad  i\geq 1 \, .
\end{cases}
\]
\end{itemize}
$\HH_0(\pi)$ is an isomorphism, this is evident from the construction of $A_\cof$. We are left to prove that $A_\cof$ has no higher homologies.

We first decompose $A_\cof$ as a direct sum of the subalgebras of paths starting and finishing at a fixed couple of vertices:
\begin{equation}
\label{pathsdecomposition}
A_\cof = \bigoplus\limits_{a,b \in Q^\vartheta_0} P_{a,b}, \qquad P_{a,b} = \big\{\text{paths in $Q^\vartheta$ starting at $a$ and ending at $b$}\big\} \, .
\end{equation}
This decomposition preserves the differential, so we only need to prove that each $P_{a,b}$ has no higher homologies.

\underline{Claim:} If we substitute each $j_a,i_a$ with the cycle $c_a=i_a j_a$ and prove that the resulting dg-algebras have no higher homologies, then neither $P_{a,b}$ have.
\begin{proof}[Proof of the claim:]  Let us call $\widetilde{P}_{a,b}$ the dg-algebra of paths from $a$ to $b$ in the quiver $\smash{Q^\vartheta}$, where we substitute each pair of arrows $j_a,i_a$ with the cycle $c_a=i_aj_a$. Then, for each fixed $a,b \in Q_0$, we have four cases\footnote{We use the following notation, for a vertex $a \in Q_0$ in the original quiver, we denote by $\overline{a}$ the associated framed vertex.}: 
\begin{equation}
\label{casespaths}
\begin{cases}
P_{a,b} = \widetilde{P}_{a,b} \\
P_{a,\overline{b}} = j_b \cdot \widetilde{P}_{a,\overline{b}} \cong \widetilde{P}_{a,\overline{b}}\\
P_{\overline{a},b} = \widetilde{P}_{\overline{a},b} \cdot i_a \cong \widetilde{P}_{\overline{a},b}\\
P_{\overline{a},\overline{b}} = j_b \cdot \widetilde{P}_{\overline{a}, \overline{b} } \cdot i_a \cong  \widetilde{P}_{\overline{a}, \overline{b} }
\end{cases}
\end{equation}
\end{proof}
Now we consider the following filtration on the algebras $\widetilde{P}_{a,b}$: 
\[
F^p :=\Span_\C \{ \text{ paths with $\#x + \#y \geq 2p$} \} \, .
\]
Remember that the differential has the form ``$\smash{d \vartheta = [x,y] + c }$'', so that the associated graded has differential of the form $\smash{d_{\mathrm{gr}} \vartheta_a = c_a }$, which involves only loops on the vertices $a \in Q_0$. But then we can decompose the dg-algebras $\smash{\widetilde{P}_{a,b}}$ into their word structure\footnote{More precisely, we can decompose $\smash{\widetilde{P}_{a,b}}$ into the direct sum of those paths that, except for the arrows $\vartheta_a$ and $c_a$ --- meaning that we set these to $1$ --- are the same.}, and discover that the only non-trivial building blocks of which they are made of are dg-algebras of the form
\[
L = \big( k\langle \vartheta, c \rangle, d\vartheta = c \big) \,.
\]
which have no higher homologies\footnote{An elementary argument is to observe that the derivation defined by the formula $h(c) = \vartheta$ and $h(c)=0$, is a homotopy between the $0$ map and the map $\text{\emph{length}}(-) \cdot \rm{Id}$, which is an isomorphism in (homological) degrees $\geq 1$. This implies that $\rm{H}_i(L) = 0 $ for $i\geq 1$.}.
\end{proof}
\begin{Lem}
\label{lem:cof}
$\iota : S \to A_\cof$ is a cofibration in $ \DGA_\C^+$, or equivalently $A_\cof$ is a cofibrant object in $\DGA_S^+$.
\end{Lem}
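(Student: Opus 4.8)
The plan is to recognise $A_\cof$ as a \emph{semifree} (relative cell) extension of $S$ and then invoke the standard fact that such extensions are cofibrations in the projective model structure. Forgetting the differential, $A_\cof = \C Q^\vartheta$ is the tensor algebra $T_S(W)$ on the graded $S$-bimodule $W = W_0 \oplus W_1$, where $W_0$ is spanned by the degree-$0$ arrows of $\overline{Q^\fr}$ (namely the $x_\gamma, y_\gamma, i_a, j_a$) and $W_1$ is spanned by the degree-$1$ loops $\vartheta_a$, one for each $a \in Q_0$. Thus $A_\cof$ is free as a graded $S$-algebra, and the entire content of the lemma is that its differential is built compatibly with this free structure, one homological layer of generators at a time. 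This is exactly the mechanism already exploited in Examples~\ref{Exs2} for the Koszul and Calabi--Yau resolutions.

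Concretely, I would factor $\iota$ through the degree-$0$ subalgebra,
$$
S \hookrightarrow T_S(W_0) \hookrightarrow T_S(W_0 \oplus W_1) = A_\cof,
$$
and check that each inclusion is a cofibration. The first inclusion freely adjoins the degree-$0$ generators, all with zero differential; it is a finite composite of pushouts of the standard generating cofibrations of $\DGA_S^+$ (those freely attaching a single $S$-bimodule generator, i.e.\ an arrow with source and target prescribed by the idempotents of $S$), and exhibits $\C\overline{Q^\fr}$ as a free, hence cofibrant, $S$-algebra. The second inclusion freely adjoins the loops $\vartheta_a$ in degree $1$, and here the crucial point is the \emph{triangularity} of the differential: $d\vartheta_a = \mu_a \in (A_\cof)_0 = T_S(W_0)$ lands in the subalgebra already constructed, because $\mu_a = [X,Y]_a + (IJ)_a$ is a sum of products of degree-$0$ generators. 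Attaching $\vartheta_a$ is therefore the pushout of the generating cofibration that freely adjoins a degree-$1$ generator together with its boundary, along the attaching map sending that boundary to $\mu_a$. Since pushouts and finite composites of cofibrations are cofibrations, $\iota$ is a cofibration, which is equivalent to $A_\cof$ being cofibrant in $\DGA_S^+$ because $S$ is the initial object of that category.

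The only point requiring a little care --- and the step I expect to be the main (if modest) obstacle --- is the degree-$0$ bookkeeping forced by the $+$ model structure, where fibrations are surjective only in \emph{positive} degrees. One must check that freely adjoining degree-$0$ generators really does yield a map with the left lifting property against acyclic fibrations. This holds because an acyclic fibration $p\colon E \to B$ is automatically surjective in degree $0$ as well: a degree-$0$ element $b \in B_0$ differs from some $p(e)$ by a boundary $d b_1$, and lifting $b_1$ through the positive-degree surjection $p_1$ produces a genuine degree-$0$ preimage of $b$; moreover a degree-$0$ generator has no differential to match, since there is no homological degree $-1$. The lifts of the $\vartheta_a$ are then supplied by the generating cofibration as above. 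Together with Lemma~\ref{lem:acfib}, this establishes Theorem~\ref{thm:cofrep}.
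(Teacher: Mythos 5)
Your proof is correct, and it rests on the same core observation as the paper's: $A_\cof$ is free as a graded $S$-algebra on the arrows of $Q^\vartheta$, with a differential that is triangular in homological degree. The organisation, however, is genuinely different. The paper verifies the left lifting property directly: it regards $A_\cof = T_S M$ as the tensor algebra on the single $S$-bimodule $M$ spanned by \emph{all} the arrows at once, and argues that a lift of an algebra map out of a tensor algebra amounts to a linear lift of the generators, which exists because acyclic fibrations in $\DGA_\C^+$ are surjective in every homological degree. You instead factor $\iota$ through $T_S(W_0)=\C\overline{Q^\fr}$, exhibit each stage as a pushout of generating cofibrations, and conclude by closure of cofibrations under pushouts and composites. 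Each route has its advantages: yours is modular and makes explicit both the triangularity $d\vartheta_a \in T_S(W_0)$ and the degree-$0$ surjectivity of acyclic fibrations (which the paper asserts only parenthetically, and which your boundary-correction argument actually justifies), while the paper's avoids any appeal to cofibrant generation of the model structure. Note also that the paper's claim that a lift is given by a linear lifting of the generators ``by surjectivity'' glosses over the one delicate point: lifting $\vartheta_a$ requires a preimage of $f(\vartheta_a)$ whose differential equals the already-chosen lift of $\mu_a$, and this uses acyclicity of the kernel of the acyclic fibration, not surjectivity alone; in your framing that issue is absorbed into the standard fact that the disk attachment (freely adjoining a degree-$1$ generator together with its boundary) is a cofibration, so your deferral to that fact is legitimate, but it is where the real work hides in both arguments.
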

\begin{proof}
We need to prove that $\iota$ has the left lifting property with respect to acyclic fibrations.
\begin{equation}
\label{lifting}
 \begin{tikzcd}
  S \arrow{d}{\varsigma} \arrow{r}
    &  B \arrow[twoheadrightarrow]{d}{\sim} \\
A_\cof \arrow{r} 
\arrow[dashrightarrow]{ru}{\exists}
& C \end{tikzcd}
\quad 
\end{equation}
Let us observe that because $A_\cof = \C Q^\vartheta$ is the (dg) path algebra of a quiver with idempotents $S$, we can view $A_\cof = T_S M:=S \oplus M \oplus (M\otimes_S M) \dots$ as the tensor algebra of the $S$-(dg)bimodule 
\[
M:= \Span_\C \big\{\text{arrows in} \,\, Q^\vartheta\big\}\, .
\]
But then find a lifting in the diagram \eqref{lifting} amounts to simply give a (linear) lifting of the (dg) vector space $M$, which is possible for the surjectivity of the map $B\acfib C$ (acyclic fibrations are surjective in every homological degree).

\end{proof}


\subsection{Koszul complex and complete intersections}
\label{subsec:zerolocuskoszul}
Theorem~\ref{thm:cofrep} tells us that a model for the derived representation scheme for the algebra $A$ is the representation scheme of the cofibrant replacement $A_\cof$. In this section we recognise it as the Koszul complex for the moment map, and in order to do so, we first recall a few notions and important classical results about the latter.

The Koszul complex can be thought of as one of the main examples of derived intersections of subschemes of a scheme. Classically, affine varieties are the simplest examples of intersections, being zero loci of some simultaneous polynomial equations $f_1,\dots, f_m \in \O(\A^n_\C) = \C[x_1,\dots,x_n] $:
\begin{equation} 
(X,\O) = \Spec \big( R\slash(f_1) \otimes_R \dots \otimes_R R\slash (f_m) \big), \qquad R=\C[x_1,\dots,x_n]\, .
\end{equation}
Then the associated \emph{derived intersection} can be defined as the derived scheme
\begin{equation}
(X,\O_\bullet) = \Spec \big(  R\slash(f_1) \otimes_R^{\L} \dots \otimes_R^{\L} R\slash (f_m) \big)\, ,
\end{equation}
where $\otimes_R^{\L}$ is the derived tensor product of $R$-modules. The algebra of functions on this derived scheme is the \textit{Koszul complex}:
\[
K = R\slash(f_1) \otimes_R^{\L} \dots \otimes_R^{\L} R\slash (f_m) \in \cDGA_\C^+\, .
\]
A more concrete way to describe it is the following: we can view the collection of functions $f=(f_1,\dots,f_m)$ as a map of affine schemes
\[
f: \A^n_\C \to V:= \A^m_\C \, ,
\]
and consider its dual map 
\[
\O(f): \O(V) = \Sym(V^*) \to \O(\A^n_\C)=R\, .
\]
Then the Koszul complex is the commutative dg-algebra $K=(R\otimes_\C\Lambda^\bullet (V^*),d)$, where $R$ is in homological degree $0$, the vector space $V^*$ is in homological degree $1$, and the differential  
\[
d:=\O (f)_{|V^*} : V^* \hookrightarrow \Sym(V^*) \to R \, .
\]
An useful classical result on the Koszul complex is  
\begin{Thm}[\cite{Ma}] 
\label{thm:koszulhomology}
The following are equivalent:
\begin{enumerate}
\item $\dim_\C( \Spec(\rm{H}_0(K))) = n - m$.
\item The sequence $f_1,\dots, f_m \in R$ is a regular sequence.
\item $\rm{H}_1(K) =0$. 
\item $\rm{H}_i(K)=0$ for all $i\geq 1$.
\end{enumerate}
\end{Thm}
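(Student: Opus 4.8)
The plan is to prove the four conditions equivalent by combining the dimension theory of $R=\C[x_1,\dots,x_n]$ with the homological algebra of the Koszul complex: I would close the cycle $(2)\Rightarrow(4)\Rightarrow(3)\Rightarrow(2)$ with Koszul-theoretic arguments and attach the geometric statement $(1)$ to this block through the Cohen--Macaulay property of $R$. Write $I=(f_1,\dots,f_m)$, so that $\HH_0(K)=R/I$ and $\Spec(\HH_0(K))$ is the scheme cut out by the $f_i$. Since $(1)$, $(3)$ and $(4)$ are manifestly independent of the order of the $f_i$ whereas ``regular sequence'' is not, the first move is to reduce to a local --- or, since the moment-map equations are homogeneous of positive degree and thus lie in the irrelevant maximal ideal, graded-local --- situation, where regularity becomes order-independent and graded Nakayama is available. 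The polynomial ring localised (or completed) appropriately is Noetherian local and Cohen--Macaulay, so nothing is lost.

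For $(1)\Leftrightarrow(2)$: as $R$ is a regular, hence Cohen--Macaulay, equidimensional affine $\C$-algebra of dimension $n$, the dimension formula gives $\dim_\C\Spec(R/I)=n-\operatorname{height}(I)$, while Krull's height theorem forces $\operatorname{height}(I)\le m$. Thus $(1)$ is equivalent to $\operatorname{height}(I)=m$. In a Cohen--Macaulay ring grade equals height, so this reads $\operatorname{grade}(I,R)=m$; and an ideal of grade $m$ generated by exactly $m$ elements has those generators forming a maximal $R$-regular sequence, which is precisely $(2)$.

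For $(2)\Rightarrow(4)\Rightarrow(3)$: the first implication is the standard fact that a regular sequence turns its Koszul complex into a free resolution of $R/I$, which I would prove by induction on $m$, writing $K=K(f_1,\dots,f_{m-1})\otimes_R[\,R\xrightarrow{f_m}R\,]$ and reading the long exact sequence of the associated mapping cone, the inductive hypothesis killing the higher homology of $K(f_1,\dots,f_{m-1})$ and regularity of $f_m$ killing the remaining connecting maps. The implication $(4)\Rightarrow(3)$ is trivial.

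The only non-formal step, and the main obstacle, is $(3)\Rightarrow(2)$: one must promote the vanishing of the single group $\HH_1(K)$ to full regularity. I would argue by induction on $m$, the base case being the tautology that $\HH_1(K(f_1))=\{\,r\in R: f_1 r=0\,\}$ vanishes iff $f_1$ is a nonzerodivisor. For the step, set $K'=K(f_1,\dots,f_{m-1})$ and use the mapping-cone long exact sequence
\[
\cdots\to \HH_i(K')\xrightarrow{f_m}\HH_i(K')\to \HH_i(K)\to \HH_{i-1}(K')\xrightarrow{f_m}\HH_{i-1}(K')\to\cdots
\]
Here $\HH_1(K)=0$ first forces $f_m$ to act surjectively on $\HH_1(K')$, whence $\HH_1(K')=0$ by graded Nakayama; the inductive hypothesis then makes $f_1,\dots,f_{m-1}$ regular, and the same sequence in degree $1$ shows $f_m$ is a nonzerodivisor on $\HH_0(K')=R/(f_1,\dots,f_{m-1})$, so $f_1,\dots,f_m$ is regular. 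This promotion of one vanishing homology group to regularity is exactly where Nakayama and the local/graded reduction are indispensable; it simultaneously yields full acyclicity, and everything else is bookkeeping with the Koszul differential.
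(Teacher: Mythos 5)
You cannot really be compared against an internal argument here, because the paper gives none: Theorem~\ref{thm:koszulhomology} is quoted as a classical result with the citation \cite{Ma} and is never proved in the text. Your proposal is correct, and it is in substance the standard argument that this citation stands for: $(2)\Rightarrow(4)$ by induction through the mapping-cone long exact sequence, $(4)\Rightarrow(3)$ trivially, $(3)\Rightarrow(2)$ by the same sequence together with Nakayama, and $(1)\Leftrightarrow(2)$ through $\operatorname{height}=\operatorname{grade}$ in a Cohen--Macaulay ring plus Krull's height theorem. What your write-up adds to the bare citation --- and this is worth keeping --- is the observation that your reduction to the graded-local situation is not a convenience but a necessity: as literally stated, for arbitrary $f_1,\dots,f_m \in \C[x_1,\dots,x_n]$, the theorem is false, because (1), (3), (4) are invariant under permuting the $f_i$ while (2) is not. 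Concretely, in $\C[x,y,z]$ the ordered sequence $y(1-x),\ z(1-x),\ x$ generates $(x,y,z)$ and has acyclic Koszul complex (its permutation $x,\ y(1-x),\ z(1-x)$ is a regular sequence, and the Koszul complex does not see the order), but it is not itself regular, since $z(1-x)\cdot y = y(1-x)\cdot z$ while $y \notin (y(1-x))$. The implication $(3)\Rightarrow(2)$, where Nakayama enters, genuinely requires the $f_i$ to lie in the Jacobson radical or, as in your reduction, to be homogeneous of positive degree; the paper's use of the theorem in Theorem~\ref{thm:0} is legitimate only because the components of the moment map are homogeneous quadrics. The one step you should still spell out is the claim that ``nothing is lost'' when localising: it rests on the fact that every proper graded ideal is contained in the irrelevant maximal ideal, so a nonzero finitely generated graded module (such as $\HH_i(K)$) has nonzero localisation there, and regularity of a homogeneous sequence of positive degree can likewise be checked after localisation.
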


Let us turn back to the case of our interest, in which we want to recognise 
\[
(A_\cof)_{\vv,\ww} =(\C Q^\vartheta)_{\vv,\ww}
\]
as the Koszul complex on the moment map. We recall that the quiver $Q^\vartheta$ is constructed from the quiver $\smash{\overline{Q^\fr}}$ by adding a new loop in homological degree $1$ on each of the original vertices of the quiver $Q$. Therefore, a representation of the path algebra $\C Q^\vartheta$ is just a representation of the subalgebra $\smash{\C \overline{Q^\fr}}$ (an element of the vector space $M(Q,\vv,\ww)$), together with a family of endomorphisms
\[\Theta = (\Theta_a)_{a\in Q_0}  \in \bigoplus\limits_{a \in Q_0} \gl_{v_a}(\C) = \lieg_\vv
\]
in homological degree $1$. Putting everything together we obtain
\[
(\C Q^\vartheta)_{\vv,\ww} = \O( M(Q,\vv,\ww))\otimes_\C \Lambda^\bullet \lieg_\vv \in \cDGA_\C^+\, ,
\]
which is nothing else but the Koszul complex for the zero locus defined by the moment map
\[
\mu: M(Q,\vv,\ww) \to \lieg_\vv^*\, .
\]
Its spectrum is a model for our derived representation scheme, as the derived intersection of the moment map equations:
\begin{Thm}
\label{thm:drep=koszul}
The cofibrant resolution $A_\cof \acfib A $ in $\DGA_S^+$ gives a model for the derived representation scheme as the (spectrum of the) Koszul complex on the moment map:
\begin{equation}
\label{drep=koszul2}
\DRep_{\vv,\ww} \big( A \big) \cong \Rep_{\vv,\ww} \big(  A_\cof) = \Spec \big( \O(M(Q,\vv,\ww))  \otimes \Lambda^\bullet \lieg \big)\, .
\end{equation}
\end{Thm}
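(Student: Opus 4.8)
The plan is to synthesize results already in hand, the genuine analytic content having been absorbed into Theorem~\ref{thm:cofrep}. First I invoke Theorem~\ref{thm:cofrep}: $A_\cof=(\C Q^\vartheta,d)$ is a cofibrant replacement of $A$ in $\DGA_S^+$. By the definition of the derived representation scheme recorded in~\eqref{eq:drep}, any cofibrant replacement computes it, so at once $\DRep_{\vv,\ww}(A)\cong\Rep_{\vv,\ww}(A_\cof)=\Spec\big((A_\cof)_{\vv,\ww}\big)$. It remains only to identify the commutative dg-algebra $(A_\cof)_{\vv,\ww}$ with the Koszul complex on the moment map.

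Next I compute the underlying graded algebra. Using the framed, doubled form of Proposition~\ref{connection2} recorded in~\eqref{linrep}, a representation of $\C Q^\vartheta$ in $\C^\vv\oplus\C^\ww$ compatible with $\rho_{\vv,\ww}$ is a representation of the degree-zero subalgebra $\smash{\C\overline{Q^\fr}}$, i.e.\ a point of $M(Q,\vv,\ww)$, together with one endomorphism $\Theta_a\in\gl_{v_a}(\C)$ for each added loop $\vartheta_a$. Since $\vartheta_a$ lives in homological degree $1$, the matrix entries of the $\Theta_a$ are odd coordinates, so the abelianisation built into $(-)_{\vv,\ww}$ produces an exterior algebra on $\bigoplus_a\gl_{v_a}(\C)=\lieg_\vv$ (identified with its dual via the trace pairing used throughout) rather than a polynomial algebra. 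This yields the graded identification $(A_\cof)_{\vv,\ww}\cong\O(M(Q,\vv,\ww))\otimes_\C\Lambda^\bullet\lieg_\vv$ already displayed before the statement.

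Finally I check that the transported differential is the Koszul differential. As $(-)_{\vv,\ww}$ is a functor on differential graded algebras it preserves differentials, so the defining relation $d\vartheta_a=\mu_a(x,y,i,j)$ becomes the map carrying each degree-one generator to the corresponding component of $\mu=[X,Y]+IJ$ inside $\O(M(Q,\vv,\ww))$. Comparing with the description of the Koszul differential $d=\O(f)_{|V^*}:V^*\hookrightarrow\Sym(V^*)\to R$ from \S\ref{subsec:zerolocuskoszul}, taken with $R=\O(M(Q,\vv,\ww))$, $V=\lieg_\vv^*$ the target of the moment map and $f=\mu$, the two maps agree on generators, hence everywhere by the Leibniz rule. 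I expect no genuine obstacle at this stage: the one point demanding care is the bookkeeping of the trace identification $\lieg_\vv^*\cong\lieg_\vv$, so that the moment-map components are paired correctly against the exterior generators; the conceptual difficulty — acyclicity of $A_\cof$, i.e.\ the absence of higher homology — is entirely contained in the already-established Theorem~\ref{thm:cofrep}.
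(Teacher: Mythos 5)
Your proposal is correct and follows essentially the same route as the paper: the paper also invokes Theorem~\ref{thm:cofrep} to reduce to computing $(\C Q^\vartheta)_{\vv,\ww}$, identifies a representation of $\C Q^\vartheta$ as a point of $M(Q,\vv,\ww)$ together with degree-one endomorphisms $\Theta_a\in\gl_{v_a}(\C)$, and reads off $\O(M(Q,\vv,\ww))\otimes\Lambda^\bullet\lieg_\vv$ with the moment-map (Koszul) differential. Your write-up is in fact slightly more explicit than the paper's (which treats the identification of the differential and the trace pairing $\lieg_\vv^*\cong\lieg_\vv$ as immediate), but there is no substantive difference in method.
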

In particular we can observe that this is a derived scheme of finite type (Definition~\ref{def:finitetype}) and that the Koszul complex is bounded, Therefore all the invariants defined in \S~\ref{subsec:vfc} (\eqref{vfc},\eqref{virtuallambda}) make sense, because the sums are bounded (by the dimension of the Lie algebra $\smash{\dim_\C \lieg_\vv = \vv^2=\vv\cdot \vv}$).

\begin{Remark} In \S~\ref{subsec:cofres} we gave a self-contained proof of why the resolution provided by the path algebra of the quiver $Q^\vartheta$ obtained by adding one loop on each vertex in which the corresponding component of the moment map is considered (i.e. the original vertices) works. In \S~\ref{subsec:zerolocuskoszul} we explained why the resulting representation scheme is the Koszul complex on the moment map. We remark that the same results can be explained in a slightly different flavour through the theory of noncommutative complete intersections (NCCI) and partial preprojective algebras (\cite{CrEtGi}, \cite{EtGi}).
\end{Remark}


\section{Main results}
\label{sec:comparisonresults}

\subsection{Flat moment map and vanishing representation homology}
\label{subsec:classicalresultsnaka}
In this section we recall some classical results on the flatness for the moment map of Nakajima quiver varieties which are useful for our purposes. We show how flatness is equivalent to the condition of vanishing of higher representation homologies for the corresponding algebra. 

Remember that for each quiver $Q$ and for each fixed dimensions $\vv,\ww \in \N^{Q_0}$ we have the corresponding Nakajima varieties $\Mo$ (affine) and $\M^\chi$ (quasiprojective), where $\chi \in \Hom_{\Grp_\C} (G_\vv, \C^\times) $ is a given (nontrivial) character. We also recall that the group of all characters of the gauge group $G=G_\vv=\prod_{a\in Q_0} \GL_{v_a}(\C)$ is isomorphic to the lattice
\[
\Z^{Q_0}  \cong \Hom_{\Grp_\C}(G,\C^\times)\, ,
\]
via the assignment
\[
\theta \mapsto \chi_\theta(g) = \prod\limits_{a \in Q_0} \det(g_a)^{\theta_a}\, .
\]
In this section we use the parameter $\theta$ for the characters and denote $\M^{\chi_\theta}$ simply by $\M^\theta$.

We recall that the \emph{Cartan matrix} of the quiver $Q$ is the matrix $\smash{C_Q =2 \cdot \Id - A_{\overline{Q}}}$, where $\smash{A_{\overline{Q}}}$ is the adjacency matrix of the doubled quiver $\overline{Q}$. For a fixed dimension vector $\vv \in \N^{Q_0}$, a vector $\theta \in \Z^{Q_0}$ is called \emph{$\vv$-regular}, if for each $\alpha \in \Z^{Q_0} \backslash \{ 0\}$ such that $C_Q \alpha \cdot \alpha \leq 2$ and $0 \leq \alpha \leq \vv$ (component-wise) then 
\[
\theta_1 \alpha_1 + \dots + \theta_{|Q_0|} \alpha_{|Q_0|} \neq 0 \, .
\]
The subset of $\mathbb{R}^{Q_0}$ of $\vv$-regular vectors is the complement of some hyperplanes. Its connected components are called \emph{chambers}, and the variety $\M^\theta$ depends only on the chamber of $\theta$.
\begin{Thm}[Theorem 5.2.2. in \cite{Gi1}] 
\label{thm:naka}
Let $\vv\in \N^{Q_0}$ be a dimension vector and  $\theta \in \Z^{Q_0}$ be $\vv$-regular, then any $\theta$-semistable point in $\mu^{-1}(0)$ is $\theta$-stable and $\M^\theta$ is a smooth, connected, variety of (complex) dimension
\[
\dim \M^\theta = 2 \vv\cdot \ww - C_Q \vv \cdot \vv \, ,
\]
(with the convention that $\M^\theta=\emptyset$ when this dimension is negative).
\end{Thm}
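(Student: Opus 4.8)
The plan is to establish the four assertions in the order in which they build on one another: first that $\theta$-semistability coincides with $\theta$-stability, then smoothness together with the dimension count, and finally connectedness.

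\textbf{Reformulation and semistable $=$ stable.} Following the reduction of the framing to a single extra vertex \cite{Cr}, I would regard a point $(X,Y,I,J)\in M(Q,\vv,\ww)$ as a representation of the doubled quiver extended by one framing vertex $\infty$ of dimension $1$, on the dimension vector $(\vv,1)$. Extending $\theta$ to $\tilde\theta=(\theta,-\theta\cdot\vv)$, so that $\tilde\theta\cdot(\vv,1)=0$, King's criterion identifies $\theta$-(semi)stability of $(X,Y,I,J)$ with the (semi)stability of the extended representation at slope zero for $\tilde\theta$. Now suppose $p\in\mu^{-1}(0)$ were semistable but not stable. The slope-zero semistable representations form an abelian category whose simple objects are the stable ones, so $p$ admits a Jordan--Hölder filtration with at least two stable factors; since the framing vertex has total dimension one, at most one factor meets $\infty$, and hence some stable factor $N$ is supported on the original vertices, with $\alpha:=\underline{\dim}\,N$ satisfying $0\le\alpha\le\vv$, $\alpha\neq0$, and $\theta\cdot\alpha=0$. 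Restricting $[X,Y]+IJ=0$ to $N$ kills the framing maps, so $N$ is a module over the preprojective algebra $\Pi_Q$, and Crawley-Boevey's formula \cite{Cr}
\[
\dim\Ext^1_{\Pi_Q}(N,N)=2\dim\Hom_{\Pi_Q}(N,N)-C_Q\alpha\cdot\alpha
\]
gives, for the stable (hence $\Hom_{\Pi_Q}(N,N)=\C$) module $N$, the inequality $C_Q\alpha\cdot\alpha=2-\dim\Ext^1_{\Pi_Q}(N,N)\le2$. This contradicts the $\vv$-regularity of $\theta$, so every semistable point is stable.

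\textbf{Smoothness and dimension.} At a $\theta$-stable $p$ the extended representation has endomorphism algebra $\C$, and since the framing vertex is rigid the only gauge transformation in $G_\vv$ fixing $p$ is the identity; thus $G_\vv$ acts with trivial stabiliser on $\mu^{-1}(0)^{\theta\text{-st}}$. As $M(Q,\vv,\ww)$ is a symplectic vector space with moment map $\mu$, the standard identification $\operatorname{coker}(d\mu_p)\cong(\Lie\operatorname{Stab}_{G_\vv}(p))^*$ shows $d\mu_p$ is surjective along the stable locus, so $0$ is a regular value there and $\mu^{-1}(0)^{\theta\text{-st}}$ is smooth of dimension $\dim M-\dim\lieg_\vv$. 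Quotienting by the free, proper action of $G_\vv$ yields a smooth $\M^\theta$ with
\[
\dim\M^\theta=\dim M-2\dim\lieg_\vv=2\Big(\sum_{\gamma\in Q_1}v_{s(\gamma)}v_{t(\gamma)}+\sum_{a}w_av_a\Big)-2\,\vv\cdot\vv.
\]
Since $C_Q=2\,\Id-A_{\overline{Q}}$ gives $C_Q\vv\cdot\vv=2\,\vv\cdot\vv-2\sum_{\gamma}v_{s(\gamma)}v_{t(\gamma)}$, the right-hand side equals $2\,\vv\cdot\ww-C_Q\vv\cdot\vv$; when this number is negative the stable locus is empty and $\M^\theta=\emptyset$.

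\textbf{Connectedness --- the main obstacle.} The remaining and hardest point is connectedness, which does not follow from the local analysis above. The route I would take is to identify $\M^\theta$, for $\theta$ generic, with the hyperkähler quotient $\big(\mu_{\R}^{-1}(\zeta)\cap\mu^{-1}(0)\big)\big/\prod_a U(v_a)$ at a regular parameter $\zeta$, and to invoke connectedness of the regular level set of the hyperkähler moment map on the flat (connected, simply connected) vector space $M(Q,\vv,\ww)$. Equivalently, on the algebraic side, since $\M^\theta$ is smooth it is connected precisely when the stable locus $\mu^{-1}(0)^{\theta\text{-st}}$ is irreducible, which I would deduce from Crawley-Boevey's irreducibility analysis of $\mu^{-1}(0)$ \cite{Cr}. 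I expect this step to require genuinely more than the moment-map regularity used above, and it is where the bulk of the work lies.
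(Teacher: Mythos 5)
First, a contextual point: the paper does not prove this statement at all --- it is imported verbatim from Ginzburg's lectures \cite{Gi1} (Theorem 5.2.2 there), so there is no internal proof to compare against; your attempt must be measured against the standard argument. Your first two steps do reproduce that standard argument correctly: the Crawley-Boevey reduction of the framing to a single vertex $\infty$ with dimension $1$, King's criterion, the Jordan--H\"older argument producing a stable factor $N$ supported away from $\infty$ with $\theta\cdot\alpha=0$, the preprojective-algebra formula $\dim\Ext^1_{\Pi_Q}(N,N)=2\dim\End_{\Pi_Q}(N)-C_Q\alpha\cdot\alpha$ forcing $C_Q\alpha\cdot\alpha\le 2$ and contradicting $\vv$-regularity; and then moment-map regularity at points with trivial stabiliser plus freeness of the $G_\vv$-action, giving smoothness and the dimension count $\dim M(Q,\vv,\ww)-2\dim\lieg_\vv=2\vv\cdot\ww-C_Q\vv\cdot\vv$ (consistent with Remark~\ref{rmk:dim} of the paper). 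These parts are sound.

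The genuine gap is connectedness, which you flag but do not close, and neither of your two sketched routes works as stated. For the hyperk\"ahler route, there is no theorem you can ``invoke'' asserting that a regular level set of the hyperk\"ahler moment map on a flat vector space is connected: Atiyah--Kirwan-type connectivity results apply to proper real or K\"ahler moment maps, not to the combined map $\mu_{\mathbb{R}}\oplus\mu$ in the hyperk\"ahler setting, so the statement you want to quote is exactly the statement to be proved. The algebraic route is the correct one, but it is not a corollary of anything you established: one needs Crawley-Boevey's irreducibility theorem for $\mu^{-1}(0)$ of the extended quiver $\Gamma=Q^\infty$ at dimension $(\vv,1)$ (which rests on his stratification of $\mu^{-1}(0)$ by representation type and root-theoretic dimension estimates for the strata, and which moreover requires care because $\mu^{-1}(0)$ is \emph{not} irreducible for arbitrary $(\vv,1)$ --- e.g.\ $Q=A_1$, $v=2$, $w=1$ gives two components --- so one must argue about the component containing the stable locus). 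In short, your proposal proves ``semistable $=$ stable,'' smoothness, and the dimension formula, but the connectedness claim --- which is the deepest part of the cited theorem --- remains unproven.
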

\begin{Remark}
\label{rmk:dim}
Observe that the dimension counting is what we would expect. In fact
\[
\dim (M(Q,\vv,\ww)) = 2 \vv\cdot \ww  +A_{\overline{Q}} \vv\cdot \vv = 2\vv \cdot \ww - C_Q 
\vv \cdot \vv + 2 \vv\cdot \vv \, .
\]
When we take the zero locus by $\mu$ we expect to decrease the dimension by the number of equations of $\mu$, which is $\vv \cdot \vv $ and then again by $\vv\cdot \vv$ when taking the $G_\vv$-quotient.
\end{Remark}
Let us consider for some $\vv$-regular $\theta$ the natural affinisation morphism
\begin{equation}
\varphi : \M^\theta \to \Spec\big( \O(\M^\theta) \big) \, .
\end{equation}
This morphism is a Poisson morphism\footnote{The Poisson structure on Nakajima varieties comes from the general formalism of Hamiltonian reduction, and coincides with the one induced by the symplectic form on the regular locus.} (obviously, because $\varphi^*$ is the identity) and it is a resolution of singularities (i.e. projective and birational) (\cite{BeLo}). The variety $\M^\theta$ depends, a priori on the chamber of $\theta$, but actually its affinisation $\Spec(\O(\M^\theta))$ is independent of the choice of $\vv$-regular $\theta$. We can call this variety simply $\M$ and we obtain a diagram of the following form
\begin{equation}
  \begin{tikzcd}
  \M^\theta \arrow{d}{\varphi} \arrow{rd}{p}  \\
 \M \arrow{r}{\psi}& \Mo
     \end{tikzcd}
\end{equation}
which is the so-called Stein factorisation (\cite{St}) of the proper morphism $p$. The pre-image of the point $0 \in \Mo$ through $\psi $ is always $0 \in \M$. In particular the fiber $p^{-1}(0)$ is equal to the central fiber $\varphi^{-1}(0)$ of the affinisation morphism
\[
p^{-1}(0) = (\psi \circ \varphi)^{-1}(0) = \varphi^{-1}(\psi^{-1}(0))= \varphi^{-1}(0)\, ,
\]
 and therefore is a homotopy retract of the variety $\M^\theta$. 
\begin{Thm}[\cite{BeLo}] 
\label{thm:nakalosevginz}
If the moment map $\mu : M(Q,\vv,\ww) \to \lieg_\vv^*$ is flat, then $\psi$ is an isomorphism, and in particular $\O(\M^\theta) \cong \O(\Mo)$.
\end{Thm}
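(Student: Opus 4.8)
The plan is to translate ``$\psi$ is an isomorphism'' into an equality of rings of global functions, and then reduce that equality to a single extension-of-functions statement on $\mu^{-1}(0)$ governed by flatness. Since $\M=\Spec(\O(\M^\theta))$ and $\psi$ is by construction the morphism induced by the comorphism $p^*:\O(\Mo)\to\O(\M^\theta)$, and since $p$ (hence $\psi$, which is finite by the Stein factorisation) is proper and surjective, the map $p^*$ is injective. Thus $\psi$ is an isomorphism if and only if $p^*$ is surjective, that is, if and only if $\O(\M^\theta)=\O(\Mo)$. This is precisely the ``in particular'' clause, so the whole theorem reduces to proving the equality of these two rings.

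Next I would realise both rings as rings of $G$-invariant functions on (opens of) $\mu^{-1}(0)$. By definition $\O(\Mo)=\O(\mu^{-1}(0))^{G}$. On the other hand, for $\vv$-regular $\theta$ every semistable point is stable (Theorem~\ref{thm:naka}), so $\M^\theta$ is the geometric (good GIT) quotient of the open locus $\mu^{-1}(0)^{\theta\sss}\subset\mu^{-1}(0)$ by $G$, whence $\O(\M^\theta)=\O(\mu^{-1}(0)^{\theta\sss})^{G}$. Under these identifications $p^*$ is nothing but restriction of functions along the open immersion $\mu^{-1}(0)^{\theta\sss}\hookrightarrow\mu^{-1}(0)$ followed by taking $G$-invariants. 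It therefore suffices to show that the restriction
\[
\O(\mu^{-1}(0))\longrightarrow \O(\mu^{-1}(0)^{\theta\sss})
\]
is an isomorphism, and then pass to $G$-invariants.

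This is where flatness enters. By Theorem~\ref{thm:flatacyclic}, flatness of $\mu$ is equivalent to $\mu^{-1}(0)$ being a complete intersection inside the smooth affine space $M(Q,\vv,\ww)$; in particular $\mu^{-1}(0)$ is Cohen--Macaulay, hence satisfies Serre's condition $S_2$. For an $S_2$ scheme, restriction of regular functions to the complement of a closed subset of codimension $\geq 2$ is an isomorphism, because the associated local cohomology groups $\HH^0_Z$ and $\HH^1_Z$ vanish. The entire statement thus collapses to the codimension estimate
\[
\mathrm{codim}_{\mu^{-1}(0)}\big(\mu^{-1}(0)\setminus\mu^{-1}(0)^{\theta\sss}\big)\ \geq\ 2 .
\]
Granting this, restriction is an isomorphism, and taking $G$-invariants gives $\O(\M^\theta)=\O(\mu^{-1}(0)^{\theta\sss})^{G}=\O(\mu^{-1}(0))^{G}=\O(\Mo)$, so $p^*$ is surjective and $\psi$ is an isomorphism.

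The codimension bound is the main obstacle, and it is where flatness is used a second time. I would stratify $\mu^{-1}(0)$ by representation type, noting that a point is unstable exactly when the corresponding representation of $\overline{Q^\fr}$ admits a proper nonzero subrepresentation violating the $\theta$-slope inequality; flatness forces each such stratum to have its expected dimension, so the Kac/Crawley-Boevey dimension estimates (\cite{Kac},\cite{Cr}) bound the destabilising strata and push their codimension to at least $2$ once $\theta$ is $\vv$-regular, so that the generic representation is stable. As an alternative to this explicit estimate, one may instead argue that under flatness $p:\M^\theta\to\Mo$ is a symplectic (hence crepant) resolution of a variety with symplectic, therefore rational, singularities, so that $p_*\O_{\M^\theta}=\O_{\Mo}$ and the equality of global functions follows by taking sections over the affine $\Mo$; this route trades the codimension computation for the rationality of the singularities of $\Mo$.
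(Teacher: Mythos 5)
The paper itself offers no proof of this statement --- it is quoted directly from \cite{BeLo} --- so your attempt can only be measured against the claim itself, and it contains a genuine gap. The whole argument funnels through the assertion that the unstable locus has codimension $\geq 2$ in $\mu^{-1}(0)$, so that every regular function extends from $\mu^{-1}(0)^{\theta\sss}$ to $\mu^{-1}(0)$ by the $S_2$ property. That codimension bound is false, already in the paper's own running example (Figure~\ref{blowupT*p1}): take $Q=A_1$, $v=1$, $w=2$, $\theta=-1$, where the moment map is flat ($w\geq 2v-1$) and $\M^\theta\cong \TT^*\PP^1$. Here $\mu^{-1}(0)=\{(I,J)\,:\, IJ=0\}$ is an irreducible quadric threefold in $\C^4$, while the unstable locus $\{J=0\}\cong\C^2$ has codimension \emph{one}. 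Correspondingly, your intermediate claim that restriction $\O(\mu^{-1}(0))\to\O(\mu^{-1}(0)^{\theta\sss})$ is an isomorphism fails: writing $I=(i_1,i_2)$ and $J=(j_1,j_2)^{T}$, the function $g=i_1/j_2=-i_2/j_1$ is regular on the semistable locus $\{J\neq 0\}$ (the two expressions glue because $i_1j_1+i_2j_2=0$), but it is unbounded on $\mu^{-1}(0)$ along the points $(1,0,0,t)$ as $t\to 0$, so it does not extend. The situation is even worse for $(v,w)=(1,1)$, still flat: there the unstable locus $\{j=0\}$ is an entire irreducible component of $\mu^{-1}(0)=\{ij=0\}$. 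Note that $g$ has nonzero $G$-weight --- it descends to a section of a nontrivial tautological line bundle on $\TT^*\PP^1$, not to a function --- which shows exactly why the theorem is intrinsically a statement about $G$-invariants: the functions that fail to extend across the unstable locus are precisely the quasi-invariant ones, so no argument that extends \emph{all} functions upstairs can succeed, and any proof must use invariant theory or properties of the quotient in an essential way. Your $S_2$/local-cohomology mechanism is sound in itself, but it is applied to the wrong space.

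Two further problems. First, your opening reduction already assumes $p$ is surjective (to get injectivity of $p^*$); this is not a formal consequence of properness --- the image of a Proj-to-Spec morphism of GIT quotients is merely closed, and surjectivity of $p$ under flatness is part of the content of the cited result. Second, the alternative route sketched at the end (under flatness $p$ is a symplectic resolution of a variety with rational singularities, hence $p_*\O_{\M^\theta}=\O_{\Mo}$) is circular as stated: that $p$ is birational and surjective onto $\Mo$, and that $\Mo$ is normal with rational singularities when $\mu$ is flat, is precisely the theorem of \cite{BeLo} (resting on Crawley-Boevey's normality results for these Marsden--Weinstein reductions) that you are trying to prove. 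Granting those inputs, the conclusion indeed follows from Zariski's main theorem, but nothing in your write-up establishes them.
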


The combinatorial criterium for the flatness of the moment map proved in \cite{Cr} is given in the setting of a non-framed quiver $\Gamma$. For any dimension vector $\balpha \in \N^{\Gamma_0}$ we consider the linear space of representations of the doubled quiver $L(\overline{\Gamma},\balpha)$. The gauge group acting a priori in a non-trivial way is now $\smash{G_\balpha \slash \C^\times}$
because, without the framing, the diagonal torus $\smash{\C^\times \subset G_\balpha}$ acts trivially on the linear space of representations. The Lie algebra of this group can be identified with the subalgebra $\smash{\lieg_\balpha^\n \subset \lieg_\balpha= \oplus_{i} \gl_{\alpha_i}(\C)}$ of matrices with sum of their traces equal to zero (the notation $\smash{\lieg_\balpha^\n}$ is borrowed from \cite{EtGi}). The moment map is now
\[
\begin{aligned}
\mu_\balpha : &L(\overline{\Gamma},\balpha) \to \lieg_\balpha^\n\\
& x \longmapsto [x,x^*]\, .
\end{aligned}
\]
Let us denote by $p$ the following function
\[
p: \N^{\Gamma_0} \to \Z \,, \qquad  p(\balpha) : =1+ \sum\limits_{\gamma \in \Gamma_1} \alpha_{s(\gamma)} \alpha_{t(\gamma)} - \balpha \cdot \balpha \, .
\]
\begin{Thm} [Theorem 1.1 in \cite{Cr}]
\label{thm:flatness}
The following are equivalent:
\begin{enumerate}
\item $\mu_\balpha$ is a flat morphism.
\item $\mu^{-1}_\balpha(0)$ has dimension $\balpha \cdot \balpha -1 + 2 p(\balpha)$ ($=\dim L(\overline{\Gamma},\balpha) - \dim \lieg_\balpha^\n$).
\item $p(\balpha) \geq \sum_{t=1}^r p(\bbeta^{(t)})$ for each decomposition $\balpha = \bbeta^{(1)} + \dots + \bbeta^{(r)}$ with each $\bbeta^{(t)} $ positive root.
\item $p(\balpha) \geq  \sum_{t=1}^r p(\bbeta^{(t)})$ for each decomposition  $\balpha = \bbeta^{(1)} + \dots + \bbeta^{(r)}$ with each $\bbeta^{(t)} \in \N^{\Gamma_0} \backslash \{0\}$.
\end{enumerate}
\end{Thm}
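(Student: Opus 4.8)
The plan is to establish the equivalences by disposing of $(1)\Leftrightarrow(2)$ with a general flatness argument, and then obtaining $(2)\Leftrightarrow(3)\Leftrightarrow(4)$ from an explicit computation of $\dim\mu_\balpha^{-1}(0)$. For $(1)\Leftrightarrow(2)$: the source $L(\overline{\Gamma},\balpha)$ is a smooth affine space and the target $\lieg_\balpha^\n$ is a vector space, hence regular, so the \emph{miracle flatness} criterion applies — a morphism from a Cohen--Macaulay scheme to a regular scheme is flat if and only if all its nonempty fibres have the expected dimension $\dim L(\overline{\Gamma},\balpha)-\dim\lieg_\balpha^\n$. It then suffices to control fibre dimensions. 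The scaling action $x\mapsto tx$ of $\C^\times$ satisfies $\mu_\balpha(tx)=t^2\mu_\balpha(x)$, so $\mu_\balpha^{-1}(\xi)\cong\mu_\balpha^{-1}(t^2\xi)$, and letting $t\to 0$, upper semicontinuity of fibre dimension shows that the central fibre $\mu_\balpha^{-1}(0)$ has dimension at least that of every other fibre. Hence $\mu_\balpha$ is flat if and only if $\mu_\balpha^{-1}(0)$ itself has the expected dimension $\balpha\cdot\balpha-1+2p(\balpha)$, which is exactly $(2)$.

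Next I would reduce the whole problem to a single dimension count. Identify $\mu_\balpha^{-1}(0)=\{x:[x,x^*]=0\}$ with the representation variety of the preprojective algebra $\Pi=\Pi_\Gamma$ in dimension vector $\balpha$. The easy half of the count is unconditional: since $\mu_\balpha^{-1}(0)$ is cut out inside the affine space $L(\overline{\Gamma},\balpha)$ by the $\dim\lieg_\balpha^\n$ components of $\mu_\balpha$, Krull's height theorem gives $\dim\mu_\balpha^{-1}(0)\geq\balpha\cdot\balpha-1+2p(\balpha)$. Thus $(2)$ is equivalent to the reverse inequality, and everything rests on computing the exact dimension of the preprojective representation variety.

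The core step is to compute $\dim\mu_\balpha^{-1}(0)$ by stratifying according to the isomorphism type of the semisimplification of a representation. A point whose semisimple part has simple constituents of dimension vectors $\bbeta^{(1)},\dots,\bbeta^{(r)}$ — which are necessarily positive roots, by Kac's and Crawley--Boevey's characterisation of the dimension vectors of simple $\Pi$-modules (\cite{Kac}) — lies in a stratum whose dimension I would compute from the orbit dimension together with $\dim_\C\Ext^1_\Pi$, using the preprojective Ext formula
\[
\dim_\C\Ext^1_\Pi(M,N)=\dim_\C\Hom_\Pi(M,N)+\dim_\C\Hom_\Pi(N,M)-C_Q\,\bbeta\cdot\bbeta'
\]
for $\Pi$-modules $M,N$ of dimension vectors $\bbeta,\bbeta'$ (reflecting the near-$2$-Calabi--Yau structure of $\Pi$), combined with the identity $2p(\bbeta)=2-C_Q\bbeta\cdot\bbeta$. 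Each stratum's dimension then becomes expressible through $\sum_t p(\bbeta^{(t)})$, and the generic stratum realises the expected dimension precisely when no other stratum exceeds it. This is exactly the inequality $p(\balpha)\geq\sum_t p(\bbeta^{(t)})$ over decompositions into positive roots, giving $(2)\Leftrightarrow(3)$.

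Finally, $(4)\Rightarrow(3)$ is immediate, and $(3)\Rightarrow(4)$ is purely combinatorial: given any decomposition into positive vectors, one refines each summand into positive roots and controls the change of $\sum_t p$ under such refinements using the behaviour of $p$ and of the symmetric form $C_Q$, reducing the general inequality to the root-only one. I expect the third paragraph to be the main obstacle: it fuses the homological input of the preprojective Ext formula with a delicate analysis of the stratification by representation type and of which decomposition maximises the stratum dimension, and it requires the root-theoretic input of Kac's canonical decomposition. By contrast, $(1)\Leftrightarrow(2)$ and the Krull inequality are formal, and $(3)\Leftrightarrow(4)$ is elementary once the correct refinement is identified.
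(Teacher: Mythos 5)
The first thing to note is that the paper does not prove this statement at all: it is quoted verbatim as Theorem 1.1 of \cite{Cr} and used as a black box (the paper's own contribution around it is only the reduction of the framed setting to this theorem via the quiver $\Gamma=Q^\infty$, and the subsequent Theorem~\ref{thm:0}). So there is no in-paper proof to compare your attempt against; the relevant comparison is with Crawley-Boevey's original argument.

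Measured against that, your outline is faithful in architecture. Crawley-Boevey establishes $(1)\Leftrightarrow(2)$ essentially as you do: Krull's height theorem gives $\dim\mu_\balpha^{-1}(\xi)\geq\dim L(\overline{\Gamma},\balpha)-\dim\lieg_\balpha^\n$ for every nonempty fibre, the scaling $\mu_\balpha(tx)=t^2\mu_\balpha(x)$ together with semicontinuity shows the central fibre has maximal dimension, and miracle flatness (Cohen--Macaulay source, regular target) converts flatness into the single dimension condition $(2)$. His proof of the hard equivalence is likewise by stratifying $\mu_\balpha^{-1}(0)$ according to the representation type of the semisimplification, with the preprojective Ext formula you quote as a key tool. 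But as a proof, your third paragraph is a placeholder rather than an argument: the actual content of the theorem is the stratum dimension count, and this requires (i) knowing that dimension vectors of simple $\Pi$-modules are positive roots satisfying a further inequality --- this is not just Kac, it is Theorem 1.2 of the same paper of Crawley-Boevey, itself proved by a long induction --- and (ii) computing the dimension of each stratum in terms of $p$ of the constituents and showing which stratum realises the maximum; none of this is carried out. Similarly, your $(3)\Rightarrow(4)$ rests on an unproved refinement lemma, namely that every $\bbeta\in\N^{\Gamma_0}\setminus\{0\}$ admits a decomposition into positive roots $\bbeta=\sum_i\gamma^{(i)}$ with $\sum_i p(\gamma^{(i)})\geq p(\bbeta)$; this is a genuine piece of Kac's root-theoretic machinery, not an elementary manipulation of $C_Q$. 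In short, your sketch correctly reconstructs the strategy of \cite{Cr}, and its easy steps ($(1)\Leftrightarrow(2)$, the Krull lower bound, $(4)\Rightarrow(3)$) are complete and correct, but the two steps carrying the real weight of the theorem are only named, not proved.
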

In a remark in \S~1 in \cite{Cr}, Crawley-Boevey explains how to adapt this setting to the situation of a framed quiver. From a quiver $Q$ and a framing vector $\ww$ we can construct a new quiver $\Gamma:=Q^{\infty}$, which is obtained by adding only one new vertex, denoted by $\infty$, together with a number of $w_a$ arrows towards each vertex $a \in Q_0$. If we fix now a dimension vector $\vv \in \N^{Q_0}$ and define the new vector $\balpha:= (\vv,1) \in \N^{\Gamma_0}$, then 
\begin{equation}
L(\overline{\Gamma}, \balpha) \cong L\big(\overline{Q^\fr}, \vv,\ww \big)= M(Q,\vv,\ww) \, ,
\end{equation}
by splitting the $v_a \times w_a$ matrices in $M(Q,\vv,\ww)$ in columns and the $w_a \times v_a$ matrices in rows. The two gauge groups are also isomorphic: $G_\balpha \slash \C^\times \cong G_\vv$, and under this isomorphism their actions on $L(\overline{\Gamma},\balpha) \cong M(Q,\vv,\ww)$ are the same. Therefore also the moment maps are identified:
\begin{equation}
\label{mudiag}
\begin{tikzcd}
  L(\overline{\Gamma},\balpha) \arrow{r}{\mu_\balpha} \arrow{d}{\sim}
    & \lieg_\balpha^\n \arrow{d}{\sim} \\
  M(Q,\vv,\ww) \arrow{r}{\mu}
&\lieg_\vv  \end{tikzcd}
\end{equation}
and we have the following criterium:
\begin{Cor} Consider the quiver $\smash{\overline{Q^\fr}}$ with dimension vectors $\vv,\ww \in \N^{Q_0}$, and the quiver $\Gamma=Q^\infty$ with $\balpha=(\vv,1)$. Then the following are equivalent:
\begin{enumerate}
\item $\mu$ is flat.
\item $\mu_\balpha$ is flat.
\end{enumerate}
\end{Cor}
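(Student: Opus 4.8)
The plan is to observe that essentially all of the content of this corollary has already been packaged into the commutative square \eqref{mudiag}, whose two vertical arrows are isomorphisms; the statement then reduces to the elementary fact that flatness of a morphism is invariant under pre- and post-composition with isomorphisms. Concretely, write $\phi : L(\overline{\Gamma},\balpha) \we M(Q,\vv,\ww)$ and $\psi : \lieg_\balpha^\n \we \lieg_\vv$ for the two identifications established just above (the first obtained by splitting the $v_a \times w_a$ and $w_a \times v_a$ blocks into columns and rows, the second by sending $(A,c) \in \lieg_\balpha^\n$ to $A \in \lieg_\vv$, which is a bijection because the trace-zero condition forces $c = -\tr(A)$). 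Commutativity of \eqref{mudiag} then reads $\psi \circ \mu_\balpha = \mu \circ \phi$, hence $\mu = \psi \circ \mu_\balpha \circ \phi^{-1}$.

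First I would recall that flatness is a local property of the structure-sheaf map: $\mu$ is flat precisely when each local ring $\O_{X,x}$ is flat over $\O_{Y,\mu(x)}$. Since $\phi$ and $\psi$ are isomorphisms, $\phi^{-1}$ and $\psi$ induce isomorphisms on all the relevant local rings, so the flatness of $\O_{X,x}$ over $\O_{Y,\mu(x)}$ is unchanged when we replace $\mu_\balpha$ by $\psi \circ \mu_\balpha \circ \phi^{-1}$. Applying this to the identity $\mu = \psi \circ \mu_\balpha \circ \phi^{-1}$ yields immediately that $\mu$ is flat if and only if $\mu_\balpha$ is flat, which is exactly the asserted equivalence.

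The one point that genuinely requires care — but which is already carried out in the discussion preceding the corollary rather than in the corollary itself — is that the square \eqref{mudiag} really commutes, i.e. that under $\phi$ and $\psi$ the ADHM-type moment map $\mu(X,Y,I,J) = [X,Y]+IJ$ is carried to the commutator moment map $\mu_\balpha(x) = [x,x^*]$. This is where the choice $\balpha = (\vv,1)$ and the column/row splitting are essential: the arrows incident to the extra vertex $\infty$ of dimension $1$ reassemble exactly into the framing maps $I,J$, and the $\infty$-component of $[x,x^*]$ matches the trace constraint defining $\lieg_\balpha^\n$. I do not expect any real obstacle here; once commutativity is granted, the corollary is formal, so the whole argument is a short appeal to the stability of flatness under isomorphism.
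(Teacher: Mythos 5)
Your proposal is correct and follows exactly the paper's route: the paper states the corollary as an immediate consequence of the commutative square \eqref{mudiag}, whose vertical arrows are the isomorphisms obtained from the column/row splitting and the trace-zero identification $\lieg_\balpha^\n \cong \lieg_\vv$, and offers no further argument. Your only addition is to spell out the elementary fact that flatness is preserved under pre- and post-composition with isomorphisms, which the paper leaves implicit.
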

For the condition (2) now we can use the combinatorical test given by Theorem~\ref{thm:flatness}, and using this result, we can prove that the derived representation scheme has vanishing higher homologies if and only if the moment map $\mu$ is flat:
\begin{Thm}
\label{thm:0}
The representation homology $\HH_\bullet(A,\vv,\ww)$ for the algebra~\ref{nakalg} vanishes if and only if the moment map $\mu$ is flat.
\end{Thm}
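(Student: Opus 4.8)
The plan is to obtain the statement purely by chaining together three results already in hand, turning everything into a single string of equivalences. Note first that the content is really the vanishing of the \emph{higher} homology: by Remark~\ref{rmk:zerohom} and Proposition~\ref{prop:connection3} the degree-zero piece is $\HH_0(A,\vv,\ww) \cong \O(\mu^{-1}(0))$, which never vanishes, so the assertion is that $\HH_i(A,\vv,\ww) = 0$ for all $i \geq 1$. By Theorem~\ref{thm:drep=koszul} this homology is computed by the Koszul complex
\[
K = \O\big(M(Q,\vv,\ww)\big) \otimes \Lambda^\bullet \lieg_\vv,
\]
whose degree-one generators are the $\dim \lieg_\vv = \vv \cdot \vv$ scalar components of the moment map $\mu : M(Q,\vv,\ww) \to \lieg_\vv$. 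Write $n = \dim M(Q,\vv,\ww)$ and $m = \vv \cdot \vv$ for the number of these generators, so that $\Spec(\HH_0(K)) \cong \mu^{-1}(0)$.

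First I would apply Theorem~\ref{thm:koszulhomology} to $K$: the higher homologies $\HH_i(K)$, $i \geq 1$, vanish if and only if $\HH_1(K) = 0$, equivalently if and only if the moment-map components form a regular sequence, equivalently if and only if
\[
\dim \mu^{-1}(0) = n - m = \dim M(Q,\vv,\ww) - \vv \cdot \vv .
\]
This already identifies the vanishing of the higher representation homology with $\mu^{-1}(0)$ being a complete intersection of the expected dimension.

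Next I would transport this dimension equality into Crawley--Boevey's unframed setting. Using the quiver $\Gamma = Q^\infty$ with $\balpha = (\vv,1)$ together with the identifications of diagram~\eqref{mudiag}, one has $L(\overline{\Gamma},\balpha) \cong M(Q,\vv,\ww)$ and $\mu_\balpha^{-1}(0) \cong \mu^{-1}(0)$, while $\lieg_\vv \cong \lieg_\balpha^\n$ matches the number of equations: $\vv \cdot \vv = \balpha \cdot \balpha - 1 = \dim \lieg_\balpha^\n$. Hence the displayed condition becomes precisely $\dim \mu_\balpha^{-1}(0) = \dim L(\overline{\Gamma},\balpha) - \dim \lieg_\balpha^\n$, which is condition~(2) of Theorem~\ref{thm:flatness}. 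Its equivalence (1)$\Leftrightarrow$(2) shows this holds if and only if $\mu_\balpha$ is flat, and the Corollary preceding Theorem~\ref{thm:flatness} gives that $\mu_\balpha$ is flat if and only if $\mu$ is flat. Concatenating the equivalences yields the claim.

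The single place needing genuine care — and the main, if modest, obstacle — is the bookkeeping in the last paragraph: one must verify that the number $m$ of Koszul generators in the framed picture equals $\dim \lieg_\balpha^\n$ in the unframed picture. This rests on the traceless normalisation: adjoining the vertex $\infty$ with $\alpha_\infty = 1$ gives $\balpha \cdot \balpha = \vv \cdot \vv + 1$, and the single traceless constraint on $\lieg_\balpha$ removes exactly one dimension, so $\dim \lieg_\balpha^\n = \vv \cdot \vv = \dim \lieg_\vv$. With this identification, condition~(1) of Theorem~\ref{thm:koszulhomology} and condition~(2) of Theorem~\ref{thm:flatness} are literally the same equation, and no further estimate is required.
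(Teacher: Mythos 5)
Your proposal is correct and follows essentially the same route as the paper's own proof: both identify the representation homology with the Koszul complex on the moment map via Theorem~\ref{thm:drep=koszul}, invoke Theorem~\ref{thm:koszulhomology} to reduce vanishing of higher homology to the dimension condition $\dim \mu^{-1}(0) = \dim M(Q,\vv,\ww) - \dim\lieg_\vv$, and then match this with condition~(2) of Theorem~\ref{thm:flatness} through the identifications of diagram~\eqref{mudiag}. Your explicit check that $\dim\lieg_\balpha^\n = \vv\cdot\vv = \dim\lieg_\vv$ is a welcome piece of bookkeeping that the paper's proof leaves implicit, but it does not constitute a different argument.
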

\begin{proof}
Because of the diagram \eqref{mudiag} the moment map $\mu$ is flat if and only if $\mu_\balpha$ is flat and by Theorem~\ref{thm:flatness}, condition (2), this happens if and only if 
\begin{equation}
\label{dim}
\dim \mu^{-1}(0) = \dim \mu_\balpha^{-1}(0) = \dim L(\overline{\Gamma},\balpha) - \dim \lieg_\balpha^\n = \dim M(Q,\vv,\ww)- \dim \lieg_\vv \, .
\end{equation}
The representation homology is the homology of the Koszul complex 
\[
\HH_\bullet(A,\vv,\ww) = \HH_\bullet \left( \O\big(M(Q,\vv,\ww) \big) \otimes \Lambda^\bullet \lieg_\vv\right)\, ,
\]
and therefore, by Theorem~\ref{thm:koszulhomology}, it vanishes in degrees $i\geq 1$ if and only if the dimension condition \eqref{dim} is satisfied.
\end{proof}

In the following examples we use Theorem~\ref{thm:flatness} for some quivers and we find the combinatorical condition on the dimension vectors for the moment map to be flat. It is convenient to observe that for the quiver $\Gamma =Q^\infty$ the map $p$ is, for vectors of the form $(\bbeta,1)$ or $(\bbeta,0)$ (that is the only type of vectors that we need to decompose the dimension vector $\balpha=(\vv,1)$):
\[
\begin{aligned}
&p(\bbeta,1) = \sum\limits_{\gamma \in Q_1} \beta_{s(\gamma)} \beta_{t(\gamma)}  + \bbeta \cdot \ww - \bbeta \cdot \bbeta \, , \\
&p(\bbeta,0) = p(\bbeta,1) + 1\, .
\end{aligned}
\]
\begin{Exs}
\label{Ex4}
\begin{enumerate}
\item 
\label{Exs4.1} 
The first example is that of a single-vertex quiver $Q=A_1$ with no arrows, whose $\Gamma=Q^\infty$ becomes a quiver with 2 vertices and $w$ arrows going from one to the other. We need to test for which $v$ it holds that for each decomposition
\[
(v,1) = (\beta_0, 1) +(\beta_1,0) +\dots +(\beta_r,0)\, ,\qquad \beta_t \geq 0\, .
\]
the following inequality holds:
\[
v(w-v)  \geq \beta_0(w-\beta_0) + r - \beta_1^2 - \dots - \beta_r^2\, .
\]
We can observe that actually all $\beta_1,\dots ,\beta_r \geq 1$ and therefore the function $r-\beta_1^2 -\dots -\beta_r^2 $ reaches its maximum for $\beta_1 = \dots = \beta_r=1$ for which it is $0$. So we just need to test that
\[
v(w-v) \geq \beta_0(w-\beta_0), \qquad \forall \beta_0=0, \dots, v-1
\]
The inequality can also be rewritten as
\[
\cancel{(v-\beta_0)} w \geq \cancel{(v-\beta_0)}(v+ \beta_0)\,, \quad \forall \beta_0=0, \dots, v-1 \qquad \Leftrightarrow \qquad w \geq 2v-1\, .
\]
\item The second example is a quiver with one vertex and $m$ loops ($m \geq 1$). In particular the Jordan quiver for $m=1$ described in Figure~\ref{example}. We show that for each choice of $v\geq0$ and $w\geq 1$ the moment map is flat. The quiver $\Gamma=Q^\infty$ still has 2 vertices, the first one with $m$ loops and $w$ arrows connecting the $2$\textsuperscript{nd} to the $1$\textsuperscript{st}, so that:
\[
p(\alpha_1,\alpha_2) =1 + m \alpha_1^2 + w \alpha_1 \alpha_2 - \alpha_1^2 -\alpha_2^2 \, .
\]
We need to test that for each decomposition
\[
(v,1) = (\beta_0,1) + (\beta_1,0) + \dots + (\beta_r,1) ,\qquad \beta_1,\dots,\beta_r \geq 1\, ,
\]
the following inequality holds
\[
(m-1)v^2 + vw \geq (m-1)\beta_0^2 + \beta_0 w + r +  (m-1)(\beta_1^2 + \dots + \beta_r^2) \,,
\]
which is actually true component-wise because
\[
\begin{cases}
(m-1)v^2 \geq (m-1)(\beta_0^2 + \dots +\beta_r^2)\,  \\
vw = (\beta_0 + \dots +\beta_r ) w \geq \beta_0 w + r w \geq \beta_0 w + r \, .
\end{cases}
\]
Therefore the moment map is always flat.
\item The third example is the quiver $Q=A_{n-1}$ with the following particular choice of vectors $\vv=(1,\dots,1)$ and $w_a=\delta_{a,1} + \delta_{a,n-1}$ (for which the Nakajima variety is the symplectic dual of $\TT^*\PP^{n-1}$, as explained in the next Section). The resulting quiver $\Gamma=Q^\infty$ is the cyclic quiver with $n$ vertices and dimension vector $\balpha=(1,\dots,1)$ constant to $1$, for which it is easy to check that the moment map is flat. In fact $p(\balpha)=1$ while for any other $\bbeta \in \N^n$, $0\neq \bbeta \neq \balpha$ we have $p(\bbeta)\leq 0$ so that condition (4) of Theorem~\ref{thm:flatness} is satisfied.
\end{enumerate}
\end{Exs}

\subsection{Kirwan map and tautological sheaves} 
\label{subsec:kirwan}

Let $\M^\chi=\M^\chi(Q,\vv,\ww)$ be a smooth Nakajima quiver variety (so $\chi=\chi_\theta$ with $\theta$ being $\vv$-regular, see Theorem~\ref{thm:naka}), then the locus of $\chi$-semistable points coincides with the locus of $\chi$-stable points, on which the action is free, and
\[
\M^\chi =\mu^{-1}(0)\sslash_\chi G=  \mu^{-1}(0)^{\chi\st}\slash G\, .
\]
The equivariant Kirwan map (in cohomology) is the map 
\begin{equation}
\label{Tkir}
\kappa_T :\HH^\bullet_{G\times T} \left( \mu^{-1}(0) \right) \to \HH^\bullet_{T} ( \M^\chi)\, ,
\end{equation}
obtained by composing the natural pullback for the inclusion $\smash{\mu^{-1}(0)^{\chi\st} \overset{\iota}{\subset} \mu^{-1}(0)}$ with the isomorphism $\smash{\HH_{G\times T}^\bullet\left(\mu^{-1}(0)^{\chi\st}\right) \cong \HH_{T}^\bullet(\M^\chi)}$ due to the fact that the $G$-action on the $\chi$-stable locus is free:
\[
\HH_{G\times T}^\bullet \left( \mu^{-1}(0) \right) \xrightarrow{\iota^\bullet} \HH_{G\times T}^\bullet \left(\mu^{-1}(0)^{\chi\st} \right) \cong \HH_{T}^\bullet\left( \mu^{-1}(0)^{\chi\st}\slash G \right) =\HH_T^\bullet(\M^\chi)\, .
\]
McGerty and Nevins have recently shown that the Kirwan map~\eqref{Tkir} is surjective (\cite[Corollary 1.5]{McNe}), and that the same holds for other generalised cohomology theories such as $K$-theory and elliptic cohomology. We are particularly interested in the K-theory, so the Kirwan map is
\begin{equation}
\label{TkirK}
\kappa_T: \KK_{G\times T}\left(\mu^{-1}(0) \right) \to \KK_{T}\left(\M^\chi\right)\, .
\end{equation}
Moreover the zero locus of the moment map $\mu^{-1}(0)$ is equivariantly contractible:
\[
\KK_{G\times T} \left(\mu^{-1}(0)\right) \cong \KK_{G\times T} (\pt) =\Rring(G\times T ) \cong\Rring(G)\otimes \Rring(T)\, ,
\]
where $\Rring(-)$ is the representation ring (over $\C$), so the Kirwan map has the form:
\begin{equation}
\label{TkirK2}
\kappa_T : \Rring(G)\otimes \Rring(T) \to  \KK_T\big( \M^\chi \big)\, ,
\end{equation}
and it is a surjective map of $\Rring(T)$-modules. $\KK_T(\M^\chi)$ is therefore generated by \textit{tautological classes}, because they come from classes of topologically trivial vector bundles: if $U$ is a $G\times T$-module, and $[U] \in \Rring(G\times T)$ is its class, then
\begin{equation}
\label{taut}
 \kappa_T([U]) =\big[ \big( \mu^{-1}(0)^{\chi\st} \times U\big) \slash G \big] \in \KK_T\big( \mu^{-1}(0)^{\chi\text{-st}}\slash G \big)= \KK_T(\M^\chi)\, .
\end{equation}
Moreover the map~\eqref{TkirK2} is a map of $\Rring(T)$-modules, so the only non-trivial part consist in its image on vector spaces $U$ that are only representations of $G$. For $U=V_\lambda$ irreducible representation of $G$, we denote by a calligraphic $\mathcal{V}_\lambda$ the sheaf whose K-theoretic class is $[\mathcal{V}_\lambda] = \kappa_T([V_\lambda]) \in \KK_T(\M^\chi)$. We can use these tautological classes to define invariants in the K-theory of the affine Nakajima variety by using the pushforward under the map $p$:
\begin{equation}
\label{kirnak}
\Rring(G) \otimes \Rring(T) \xrightarrow{\kappa_T} K_T( \M^\chi)  \xrightarrow{p_*} K_T( \Mo)\, .
\end{equation}
It is important to recall that in general the push-forward of a proper map $p$ in $K$-theory is given by the alternate sums of right-derived functors of $p_*$. In this particular case the target variety $\Mo$ is affine, therefore this alternate sum calculates the Euler characteristic of a sheaf $\mathcal{F}$ on $\M^\chi$, under the natural identifications:
\begin{equation}
\label{eul}
p_* ([\mathcal{F}]) =\chi_T(\M^\chi, \mathcal{F})  \in K_T(\O(\Mo)-\Mod) \cong K_T(\Mo)\, .
\end{equation}
The structure of $\O(\Mo)$-module comes from the fact that the cohomologies $\HH^i(\M^\chi,\mathcal{F})$ have a structure of $\O(\M^\chi)$-modules and the map $p:\M^\chi \to \Mo$ gives to the latter a structure of $\O(\Mo)$-module.

For an irreducible representation $U=V_\lambda$ of $G$ the composition~\eqref{kirnak} gives the Euler characteristic of the corresponding tautological sheaf $\mathcal{V}_\lambda$:
\begin{equation}
\label{eultaut}
p_*\big(\kappa_T([V_\lambda])\big) = p_* \big( [\mathcal{V}_\lambda] \big) = \chi_T(\M^\chi, \mathcal{V}_\lambda) \in K_T(\Mo)\, .
\end{equation}
The notable special case of $U=V_0$ the trivial $1$-dimensional representation of $G$, has image under the Kirwan map the ($K$-theoretic class of the) sheaf of functions on the GIT quotient $\mathcal{V}_0=\O_{\M^\chi}$, and its Euler characteristic:
\begin{equation}
\label{strmchi}
p_*(\kappa_T([V_0])) =p_*([\O_{\M^\chi}]) = \chi_T(\M^\chi,\O_{\M^\chi} )  \in K_T(\Mo)\, .
\end{equation}


\subsection{Comparison theorem and first integral formula}
\label{subsec:comparisontheorem}

In \S~\ref{subsec:vfc} we defined the virtual fundamental classes of the isotypical components of the derived character scheme
\begin{equation}
\label{drepgvfc}
\chi_T^\lambda(A,\vv,\ww)= \sum\limits_{i=0}^{\infty} (-1)^i \left[ \left( V_\lambda^* \otimes \HH_i(A,\vv,\ww)] \right)^G \right] \in K_T(\Mo)\, ,
\end{equation}
and in particular for $V_\lambda =V_0 =\C$: 
\begin{equation}
\label{drepgvfc00}
\chi_T^0(A,\vv,\ww) = \chi_T^G(A,\vv,\ww)= \sum\limits_{i=0}^{\infty} (-1)^i \left[  \HH_i(A,\vv,\ww)^G \right] \in K_T(\Mo)\, .
\end{equation}

\begin{Thm}
\label{thm:1}
Let $\vv,\ww$ be dimension vectors for which the moment map is flat, and let $\chi=\chi_\theta$ with $\theta$ $\vv$-regular, so that $\M^\chi(Q,\vv,\ww)$ is smooth. Then we have the following equality in the equivariant $\KK$-theory of the affine Nakajima variety :
\begin{equation}
\label{eq:thm1}
p_*([\O_{\M^\chi(Q,\vv,\ww)}]) = [\O_{\Mo(Q,\vv,\ww)}] =\chi_T^G(A,\vv,\ww)  \in K_T\left(\Mo(Q,\vv,\ww)\right)\, .
\end{equation}
\end{Thm}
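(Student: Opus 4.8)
The plan is to establish the two equalities in \eqref{eq:thm1} separately: the right-hand equality $[\O_{\Mo}] = \chi_T^G(A,\vv,\ww)$ is a formal consequence of the vanishing of higher representation homology, whereas the left-hand equality $p_*([\O_{\M^\chi}]) = [\O_{\Mo}]$ is the geometric assertion that $\Mo$ has rational singularities relative to the resolution $p$. Throughout, $T$-equivariance is automatic: by the enrichment of \S~\ref{subsec:Tenrich} the representation homology carries a well-defined $T$-action, while $p$ and all structure sheaves in sight are $T$-equivariant, so every identification below takes place in $K_T(\Mo)$.

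First I would dispose of the right-hand equality. Since the moment map is flat by hypothesis, Theorem~\ref{thm:0} gives $\HH_i(A,\vv,\ww) = 0$ for all $i \geq 1$, and hence a fortiori $\HH_i(A,\vv,\ww)^G = 0$ for $i \geq 1$. Therefore the alternating sum defining $\chi_T^G(A,\vv,\ww)$ in \eqref{drepgvfc00} collapses to its single degree-zero term, and using Remark~\ref{rmk:zerohom} together with Corollary~\ref{cor:derivedM0} to identify $\HH_0(A,\vv,\ww)^G = \O(\mu^{-1}(0))^G = \O(\Mo)$ we obtain
\[
\chi_T^G(A,\vv,\ww) = [\HH_0(A,\vv,\ww)^G] = [\O(\mu^{-1}(0))^G] = [\O(\Mo)] = [\O_{\Mo}]\,.
\]

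For the left-hand equality I would recall, as in \eqref{eul}, that $p_*([\O_{\M^\chi}]) = \sum_{i \geq 0} (-1)^i [R^i p_* \O_{\M^\chi}]$, and that since $\Mo$ is affine the sheaf $R^i p_* \O_{\M^\chi}$ corresponds to the $\O(\Mo)$-module $\HH^i(\M^\chi, \O_{\M^\chi})$. I would then treat the degree-zero part and the higher part separately. For $i = 0$: because $\mu$ is flat, Theorem~\ref{thm:nakalosevginz} applies, so the finite map $\psi$ in the Stein factorisation $p = \psi \circ \varphi$ is an isomorphism; consequently $p_* \O_{\M^\chi} = \psi_* \varphi_* \O_{\M^\theta} = \psi_* \O_{\M} = \O_{\Mo}$, equivalently $\O(\M^\theta) \cong \O(\Mo)$ as $\O(\Mo)$-modules via $p^*$. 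For $i \geq 1$: since $\theta$ is $\vv$-regular, Theorem~\ref{thm:naka} ensures $\M^\chi$ is smooth and $p$ is a symplectic resolution, so $\Mo$ is a variety with symplectic singularities and therefore has rational singularities, which yields $R^i p_* \O_{\M^\chi} = 0$ for $i \geq 1$. Combining the two cases gives $p_*([\O_{\M^\chi}]) = [\O_{\Mo}]$ in $K_T(\Mo)$.

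The main obstacle is precisely the higher direct image vanishing $R^i p_* \O_{\M^\chi} = 0$ for $i \geq 1$: it is the only step that is genuinely geometric rather than formal. It rests on the fact that a symplectic resolution is crepant and that a variety admitting one has symplectic, hence rational, singularities (in the sense of Beauville) --- a result external to the present development, which I would cite rather than reprove. Everything else follows either from the flatness hypothesis through Theorem~\ref{thm:0} or directly from the cited Theorem~\ref{thm:nakalosevginz}.
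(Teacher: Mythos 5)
Your proof is correct and structurally identical to the paper's: the right-hand equality is handled exactly as in the paper (flatness, Theorem~\ref{thm:0}, collapse of the Euler characteristic to degree zero, Corollary~\ref{cor:derivedM0}), and the left-hand equality rests on Theorem~\ref{thm:nakalosevginz} plus a cohomology-vanishing theorem. The one genuine difference is the source of that vanishing. The paper invokes Grauert--Riemenschneider: since $\M^\chi$ is a smooth symplectic variety its canonical bundle is trivial, so $R^ip_*\O_{\M^\chi} = R^ip_*\omega_{\M^\chi} = 0$ for $i\geq 1$; this applies directly to the proper map $p$ with smooth source and requires nothing about the singularities of $\Mo$. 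You instead go through Beauville's theory: $\Mo$ has symplectic, hence rational, singularities, so $R^ip_*\O_{\M^\chi}=0$. This is also valid, but it needs slightly more care than you give it: $\vv$-regularity alone (Theorem~\ref{thm:naka}) makes $\M^\chi$ smooth but does \emph{not} make $p$ birational --- the paper stresses that $p$ is ``often, but not always'' a resolution --- so the assertion that $p$ is a symplectic resolution of $\Mo$ (and that $\Mo$ is normal, as Beauville's definition of symplectic singularities requires) itself depends on the flatness hypothesis, via Theorem~\ref{thm:nakalosevginz} and the fact that $\varphi$ is a resolution, rather than on $\vv$-regularity as you state. Since flatness is among your hypotheses this is easily repaired, but as written you attribute the resolution property to the wrong hypothesis; the Grauert--Riemenschneider route taken by the paper sidesteps this issue entirely, which is precisely what it buys.
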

\begin{proof}
The first equality is a somewhat classical result. Firstly the (derived) pushforward in $\KK$-theory coincides with the underived pushforward
\[
p_*([\O_{\M^\chi}]) = \chi_T(\M^\chi,\O_{\M^\chi} ) = \sum\limits_{i\geq 0} (-1)^i [ \HH^i (\M^\chi,\O_{\M^\chi}) ] =\big[ \O_{\M^\chi} \big] \in K_T(\Mo)\, ,
\]
because of the vanishing of higher cohomologies (Grauert-Riemenschneider theorem, (\cite{GrRi})). 
Moreover when the moment map is flat and $\M^\chi$ is smooth we can use Theorem~\eqref{thm:nakalosevginz}:
\[
 \big[ \O_{\M^\chi} \big] = [\O_{\Mo}]  \in K_T(\Mo)  \, 
\]
Finally by Theorem~\ref{thm:flatacyclic} the representation homology $\HH_\bullet(A,\vv,\ww)$ vanishes in positive degrees, so that the Euler characteristic of its $G$-invariant part~\eqref{drepgvfc00} is:
\[
\chi_T^G(A,\vv,\ww) = [\HH_0(A,\vv,\ww)^G] \overset{\text{Cor~\ref{cor:derivedM0}}}{=} [ \O_\Mo] \, .
\]
\end{proof}

\begin{Remark}
In light of the previous explanations that we gave during the course of the paper, the result stated in Theorem~\ref{thm:1} is not entirely surprising:
\begin{enumerate}
\item
On one hand we have a symplectic resolution of singularities $p:\M^\chi \to \Mo$ therefore it is expected that functions on the smooth variety $\M^\chi$ are equal to functions on the singular $\Mo$.
\item
On the other hand $\mu^{-1}(0)$ is a complete intersection in the linear space of representations $M(Q,\vv,\ww)$, therefore the Koszul complex $\smash{\O(\DRep_{\vv,\ww}(A)) \cong \O\left(M(Q,\vv,\ww)\right) \otimes \Lambda^\bullet \lieg_\vv}$ is a resolution of $\O(\mu^{-1}(0))$:
\begin{equation}
\label{unos}
\HH_i(A,\vv,\ww) =
\begin{cases}
\begin{aligned}
&\O(\mu^{-1}(0))\,, \quad &i=0\\
&\,0\,,&i \geq 1
\end{aligned}
\end{cases} 
\quad \left(\implies \quad \chi_T(A,\vv,\ww) = \O(\mu^{-1}(0)) \right)
\end{equation}
and the subcomplex of $G$-invariants is a resolution of the functions on $\Mo$:
\begin{equation}
\label{dues}
\HH_i(A,\vv,\ww)^G =
\begin{cases}
\begin{aligned}
&\O(\mu^{-1}(0))^G\,, \quad &i=0\\
&\,0\,,&i \geq 1
\end{aligned}
\end{cases} 
\quad \left(\implies \quad \chi_T^G(A,\vv,\ww) = \O(\Mo) \,.\right)
\end{equation}
\end{enumerate}
\end{Remark}
As a corollary of Theorem~\ref{thm:1}, we can take Hilbert-Poincaré series (character for the torus) of the equality in~\eqref{eq:thm1} and obtain a equality between numerical (power) series counting the graded dimensions. Formally, if $\Mo$ were compact, the Hilbert-Poincaré would be the pushforward to the point: $\smash{\ch_T: \KK_T(\Mo) \to \KK_T(\pt) = \Rring(T)}$, instead in general we land in the field of fractions (see, for example, \S4 in \cite{NaYo1})
\[
\ch_T : \KK_T(\Mo) \to \mathrm{Frac}(\Rring(T))=: \Qring(T)\,.
\]
\begin{Remark} If we consider the only fixed point for the torus action $0 \in \Mo$, and denote its inclusion by $\iota_0:\{0\} \to \Mo$, then by functoriality we have $\ch_T = (\iota_{0,*})^{-1}$, and this tells us that is not really necessary to invert all non-zero elements in $\Rring(T)$, but only the ones of the form $1-t^\beta$ for non-zero weights $\beta$, so that we actually land in the following smaller localisation (see \S2.1 and \S2.3 in \cite{Ok4}):
\[
\Rring(T)_{,\loc}:= \C \left[ t^\alpha, \frac{1}{1-t^\beta} \right]\, ,
\]
where $\alpha,\beta$ run over all weights of $T$ and $\beta\neq 0$.
\end{Remark}

Let us denote by $x \in T_\vv \subset G$ the variables in the maximal torus of the gauge group (\emph{Kähler variables}) and by $t=(a,\hbar) \in T=T_\ww \times T_\hbar$ the \emph{equivariant variables}. Then we have, by Weyl's integral formula:
\begin{equation}
\label{int1}
\ch_T\left( \chi_{T}^G (A,\vv,\ww) \right) = \frac{1}{|G|} \int\limits_{G} \ch_{G\times T} \left(\chi_T(A,\vv,\ww)\right)(g,t) d g = \frac{1}{|W|} \int\limits_{T_\vv} \ch_{T_\vv\times T} \left(\chi_T(A,\vv,\ww)\right))(x,t) \Delta(x) d x\, ,
\end{equation}
\begin{center}
($W$ is the Weyl group of $G$, $\Delta(x)$ is the Weyl factor,\\
and integrations are over the compact real forms of $G, T_\vv$)
\end{center}
Moreover, because the Euler characteristic of the homology of a complex is equal to the Euler characteristic of the complex itself, we have
\begin{equation}
\label{weights}
\ch_{T_\vv \times T} (\chi_T(A,\vv,\ww)) = \ch_{T_\vv \times T} \big( \O(M(Q,\vv,\ww) )\otimes \Lambda^\bullet \lieg \big) = \frac{\prod_i (1-\hbar_1\hbar_2 r_i) }{\prod_j (1-s_j)}\, .
\end{equation}
where $s_j$ are the weights of $M(Q,\vv,\ww)^*$ and $r_i$ are the weights of $\lieg$:
\[
\ch_{T_\vv\times T} (M(Q,\vv,\ww)) = \sum_j s_j^{-1}\, ,\qquad \ch_{T_\vv} (\lieg) = \sum_i r_i\, .
\]
To summarise:
\begin{Cor}
\label{cor:1}
Under the same conditions of Theorem~\ref{thm:1}, and with the notation used in the previous equations (in particular~\eqref{weights}), we have the following equality of Poincaré-Hilbert series in the field of fractions $\Qring(T)$:
\begin{equation}
\label{eqint1}
\ch_T \O(\M^\chi(Q,\vv,\ww)) = \ch_T \O (\Mo(Q,\vv,\ww)) = \frac{1}{|W|} \int_{T_\vv}  \frac{\prod_i (1-\hbar_1\hbar_2 r_i) }{\prod_j (1-s_j)} \Delta(x) d x \, .
\end{equation}
\end{Cor}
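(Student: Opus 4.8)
The plan is to derive the statement by applying the Hilbert--Poincar\'e character homomorphism $\ch_T : \KK_T(\Mo) \to \Qring(T)$ to the $\KK$-theoretic identity of Theorem~\ref{thm:1} and then unwinding the resulting right-hand side through a Weyl integration argument combined with the explicit Koszul model of the representation homology. In this sense the corollary is a formal consequence of Theorem~\ref{thm:1} together with the two computations recorded in~\eqref{int1} and~\eqref{weights}.

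First I would apply $\ch_T$ to the equality $[\O_{\M^\chi}] = [\O_{\Mo}] = \chi_T^G(A,\vv,\ww)$ of~\eqref{eq:thm1}, which at once produces the two leftmost equalities $\ch_T\,\O(\M^\chi) = \ch_T\,\O(\Mo) = \ch_T\bigl(\chi_T^G(A,\vv,\ww)\bigr)$ in $\Qring(T)$. It then remains to evaluate the character of the $G$-invariant virtual class. Since $G$ is reductive and the representation homology $\HH_\bullet(A,\vv,\ww)$ is a rational $T_\vv\times T$-module with finite-dimensional weight spaces, taking $G$-invariants and then the $T$-character amounts to integrating the full $T_\vv\times T$-character over the compact real form of $G$; by Weyl's integral formula this reduces to an integral over the maximal torus $T_\vv$ against the Weyl factor $\Delta(x)$, which is exactly~\eqref{int1}. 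Because the Euler characteristic of a complex equals that of its homology, the integrand $\ch_{T_\vv\times T}(\chi_T(A,\vv,\ww))$ may be read off directly from the Koszul model of Theorem~\ref{thm:drep=koszul}: the symmetric algebra $\O(M(Q,\vv,\ww))$ contributes the denominator $\prod_j(1-s_j)^{-1}$, where the $s_j$ are the weights of $M(Q,\vv,\ww)^*$, while the exterior algebra $\Lambda^\bullet\lieg$, sitting in odd homological degree and carrying the symplectic weight $\hbar_1\hbar_2$ on the moment-map direction, contributes the numerator $\prod_i(1-\hbar_1\hbar_2 r_i)$, giving precisely~\eqref{weights}. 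Substituting this into~\eqref{int1} yields the claimed formula.

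The step requiring the most care is the Weyl integration identity itself, namely the assertion that $\ch_T$ of the $G$-invariant part of $\chi_T$ equals the $T_\vv$-integral of the $T_\vv\times T$-character weighted by $\Delta(x)$. Making this rigorous means checking that the characters involved are genuine elements of the localized representation ring, so that the geometric factor $\prod_j(1-s_j)^{-1}$ converges in $\Qring(T)$; that the weight spaces are finite-dimensional, so that the integral over the compact form of $T_\vv$ is well-defined; and that the alternating sum over homological degree commutes both with the passage to $G$-invariants and with the integration. The finite-type and boundedness properties noted after Theorem~\ref{thm:drep=koszul} supply exactly the finiteness needed, so each of these interchanges is legitimate and the computation proceeds termwise.
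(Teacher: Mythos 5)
Your proposal is correct and follows essentially the same route as the paper: apply $\ch_T$ to the identity of Theorem~\ref{thm:1}, compute the character of the $G$-invariant Euler characteristic via Weyl's integration formula (the paper's~\eqref{int1}), and identify the integrand with the character of the Koszul complex $\O(M(Q,\vv,\ww))\otimes\Lambda^\bullet\lieg$ using that the Euler characteristic of a complex equals that of its homology (the paper's~\eqref{weights}). Your closing remarks on finiteness and the legitimacy of interchanging the alternating sum, invariants, and integration are exactly the boundedness and finite-type observations the paper records after Theorem~\ref{thm:drep=koszul}.
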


We calculate the above expression~\eqref{eqint1} in some concrete examples in \S~\ref{sec:examples}.
\begin{Remark}
\label{rem:1}
The right-hand side of~\eqref{eqint1} does not depend on the GIT parameter $\chi$, while the left-hand side a priori does. By picking different $\vv$-regular $\chi,\chi'$ we obtain a combinatorical identity 
\[
\ch_T \O(\M^\chi(Q,\vv,\ww)) = \ch_T\O(\M^{\chi'} (Q,\vv,\ww) )\, ,
\]
which we will show to be non-trivial, also in simplest quiver cases (see \S~\ref{sec:examples}, specifically Remark~\ref{rem:2} in \S~\ref{subsec:grass}).
\end{Remark}

\subsection{Other isotypical components and second integral formula}
\label{subsec:int2}
In this section we prove a result similar to Theorem~\ref{thm:1} to relate other tautological sheaves with the corresponding isotypical components.

Let us recall that to define $\M^\chi$ we fixed a character $\chi \in \Hom_{\Grp_\C}(G,\C^\times) $. This character defines a $1$-dimensional representation $\C_\chi$ of $G$, whose image under the Kirwan map is the Serre twisting sheaf
\begin{equation}
\kappa_T \left(\left[ \C_\chi \right] \right) = \big[ \O_{\M^\chi} (1) \big] \in K_T(\M^\chi)\, .
\end{equation}
For each $V_\lambda$ irreducible representation of $G$, we have a tautological sheaf $\mathcal{V}_\lambda$ in the $K$-theory of $\M^\chi$. By Serre vanishing theorem when we twist 
\begin{equation}
\mathcal{V}_\lambda(m) := \mathcal{V}_\lambda  \otimes  \O_{\M^\chi}(m) \, ,
\end{equation}
by a sufficiently large power $m \gg 0$ of the twisting sheaf, higher cohomology vanish, so that
\begin{equation}
\label{Serre}
\chi_T (\M^\chi, \mathcal{V}_\lambda(m) ) = \HH^0(\M^\chi,\mathcal{V}_\lambda(m))\, .
\end{equation}
Moreover, more or less by definition of the GIT quotient $\M^\chi$, this is equal to the $G$-invariant global sections of the trivial vector bundle $\smash{\underline{V_\lambda \otimes \C_{\chi^m} }}$ over the stable locus:
\begin{equation}
\HH^0(\M^\chi,\mathcal{V}_\lambda(m)) = \Gamma \left( \mu^{-1}(0)^{\chi\st}, \underline{V_\lambda \otimes \C_{\chi^m} } \right)^G \,.
\end{equation}
Finally for $m \gg 0$ large enough, the following natural restriction map becomes an isomorphism (see for example the proof of Lemma 3 in Appendix A of \cite{AgFrOk}):
\begin{equation}
\label{restri}
\Gamma \left( \mu^{-1}(0), \underline{V_\lambda \otimes \C_{\chi^m} } \right)^G  \xrightarrow{\sim} \Gamma \left( \mu^{-1}(0)^{\chi\st}, \underline{V_\lambda \otimes \C_{\chi^m} } \right)^G \, ,
\end{equation}
but the left-hand side is nothing else but 
\begin{equation}
\label{eqGla}
\Gamma \left(\mu^{-1}(0),\underline{V_\lambda \otimes \C_{\chi^m} } \right)^G = \left( \O(\mu^{-1}(0)) \otimes V_\lambda \otimes \C_{\chi^m} \right)^G \, .
\end{equation}
It is worth noticing at this point that irreducible representations $V_\lambda$ of $G$ are labelled by collections of partitions $\lambda = (\lambda^{(1)},\dots ,\lambda^{(n)})$ and that the representation $V_\lambda \otimes \C_{\chi^m}$ is still a irreducible representation of $G$, corresponding to the shifted collection of partitions:
\begin{equation}
\label{large}
V_\lambda \otimes \C_{\chi^m} = V_{\widetilde{\lambda}}\,, \qquad \widetilde{\lambda} := \lambda + m \underline{\theta} = (\lambda^{(1)} + m \underline{\theta_1} \,,\dots, \lambda^{(n)} + m\underline{\theta_n}) \quad (\chi =\chi_\theta)\, ,
\end{equation}
(see Appendix~\ref{app:B} for the notation). We give the following definition:
\begin{Defn}
\label{def:large}
We say that an irreducible representation $V_{\widetilde{\lambda}}$ is \emph{large enough} if $\widetilde{\lambda} = \lambda +m \underline{\theta}$ (see~\eqref{large}) with $m \gg 0$ large enough for both~\eqref{Serre} and~\eqref{restri} to be true. This notion depends on the quiver $Q$, on the dimension vectors $\vv,\ww$ and on the $\vv$-regular $\chi=\chi_\theta$.
\end{Defn}
Denoting by $\widetilde{\lambda}^*$ the partition corresponding to the dual representation, we can continue equation~\eqref{eqGla} to recognise:
\begin{equation}
 \left( \O(\mu^{-1}(0)) \otimes V_{\widetilde{\lambda}} \right)^G  =  \left( \O(\mu^{-1}(0)) \otimes V_{\widetilde{\lambda}^*}^* \right)^G = \HH_0(A,\vv,\ww)_{\widetilde{\lambda}^*}^G\, ,
\end{equation}
the isotypical component of $\widetilde{\lambda}^*$ of the (zeroth) representation homology. Finally if we observe that with flat moment map, higher homologies vanish, we obtain the following result:

\begin{Thm}
\label{thm:2}
Let $\vv,\ww$ be dimension vectors for which the moment map is flat, and fix $\chi=\chi_\theta$ with $\theta$ $\vv$-regular. For $\lambda$ large enough (in the sense of Definition~\ref{def:large}) we have 
\begin{equation}
\label{eq:thm2}
p_*( [\mathcal{V}_\lambda ] ) = [\HH^0(\M^\chi,\mathcal{V}_\lambda )] = \chi_T^{\lambda^*} (A,\vv,\ww) \in \KK_T(\M^0(Q,\vv,\ww))\, .
\end{equation}
\end{Thm}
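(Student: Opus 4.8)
The plan is to assemble the chain of isomorphisms that was set up in the paragraphs preceding the statement, reading the large-enough $\lambda$ of the theorem as the partition written $\widetilde\lambda$ in that discussion (so $\mathcal V_\lambda$ is the tautological sheaf attached to the large irreducible $V_\lambda$, and $\chi=\chi_\theta$ with $\theta$ being $\vv$-regular is fixed). The goal is a string of equalities of classes in $\KK_T(\Mo)$ whose individual links have all been prepared; the real content is to check that the single ``large enough'' hypothesis of Definition~\ref{def:large} licenses every step at once, and to track the passage to the dual partition $\lambda^*$.

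First I would prove the left-hand equality $p_*([\mathcal V_\lambda])=[\HH^0(\M^\chi,\mathcal V_\lambda)]$. By~\eqref{eultaut} the proper pushforward computes the $T$-equivariant Euler characteristic, so $p_*([\mathcal V_\lambda])=\chi_T(\M^\chi,\mathcal V_\lambda)=\sum_{i\ge 0}(-1)^i[\HH^i(\M^\chi,\mathcal V_\lambda)]$. Because $\lambda$ is large enough, the Serre-vanishing statement~\eqref{Serre} annihilates every higher cohomology group, and the alternating sum collapses to $[\HH^0(\M^\chi,\mathcal V_\lambda)]$.

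Next I would identify this $\HH^0$ with an isotypical component of the representation homology. Running~\eqref{restri} and~\eqref{eqGla} — both valid precisely because $\lambda$ is large enough — gives $\HH^0(\M^\chi,\mathcal V_\lambda)\cong\Gamma(\mu^{-1}(0),\underline{V_\lambda})^G=(\O(\mu^{-1}(0))\otimes V_\lambda)^G$ as $T$-representations. Using $V_\lambda\cong(V_{\lambda^*})^*$ together with the defining formula~\eqref{lambdasubf} of the isotypical functor, this is exactly the degree-zero $\lambda^*$-component $\HH_0(A,\vv,\ww)_{\lambda^*}^G$. Since all the maps in this identification are $T$-equivariant, the resulting identity of classes holds in $\KK_T(\Mo)$ and not merely in ordinary $K$-theory.

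Finally I would bring in flatness. By Theorem~\ref{thm:0} the representation homology $\HH_\bullet(A,\vv,\ww)$ is concentrated in degree $0$; hence each isotypical component vanishes in positive degrees, and the Euler characteristic~\eqref{drepgvfc} collapses to $\chi_T^{\lambda^*}(A,\vv,\ww)=[\HH_0(A,\vv,\ww)_{\lambda^*}^G]$. Chaining this with the previous two steps yields $[\HH^0(\M^\chi,\mathcal V_\lambda)]=\chi_T^{\lambda^*}(A,\vv,\ww)$, which is the content of~\eqref{eq:thm2}. The main obstacle is not analytic but a matter of hypotheses and bookkeeping: one must confirm that a single threshold $m\gg 0$ makes both~\eqref{Serre} and~\eqref{restri} hold simultaneously — exactly what Definition~\ref{def:large} packages — and one must keep the duality $\lambda\leftrightarrow\lambda^*$ consistent, since the global sections produce $V_\lambda$ directly whereas the isotypical functor~\eqref{lambdasubf} is built from $V_{\lambda^*}^*$.
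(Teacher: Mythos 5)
Your proposal is correct and follows essentially the same route as the paper: the paper's proof is precisely the chain of identifications~\eqref{Serre}, \eqref{restri}, \eqref{eqGla} developed in the paragraphs before the theorem, combined with the duality $V_\lambda\cong (V_{\lambda^*})^*$ and the vanishing of higher representation homology from Theorem~\ref{thm:0} under flatness. Your bookkeeping of the single largeness threshold in Definition~\ref{def:large} and of the passage to $\lambda^*$ matches the paper's argument exactly.
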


The analogous integral formula to  obtained by taking characters is 
\begin{Cor}
\label{cor:2}
Under the same conditions of Theorem~\ref{thm:2}, and with the notation used in~\eqref{weights}, we have the following equality of Poincaré-Hilbert series in the field of fractions $\Qring(T)$:
\begin{equation}
\label{eqint2}
\ch_T(\chi_T(\M^\chi,\mathcal{V}_\lambda) )=
\ch_T ( \HH^0 (\M^\chi, \mathcal{V}_\lambda ) ) = \frac{1}{|W|} \int_{T_\vv}  \frac{\prod_i (1-\hbar_1\hbar_2 r_i) }{\prod_j (1-s_j)} f_\lambda(x) \Delta(x) d x\, .
\end{equation}
where $f_\lambda(x) = \ch_{T_\vv} (V_{\lambda})$ (it is the product of Schur polynomials associated to the partitions in $\lambda$).
\end{Cor}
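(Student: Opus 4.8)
The plan is to take the Hilbert--Poincaré series (the $T$-character) of the identity established in Theorem~\ref{thm:2}, exactly as Corollary~\ref{cor:1} was deduced from Theorem~\ref{thm:1}. Applying the character map $\ch_T : \KK_T(\Mo) \to \Qring(T)$ to both sides of~\eqref{eq:thm2} immediately yields the first equality $\ch_T(\chi_T(\M^\chi,\mathcal{V}_\lambda)) = \ch_T(\HH^0(\M^\chi,\mathcal{V}_\lambda))$, so that the whole content reduces to computing $\ch_T(\chi_T^{\lambda^*}(A,\vv,\ww))$ as an integral over $T_\vv$. Throughout we work under the hypotheses of Theorem~\ref{thm:2} (flat moment map, $\vv$-regular $\chi$, and $\lambda$ large enough), which are precisely what make the left-hand side computable via that theorem.

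First I would upgrade the Weyl integration formula~\eqref{int1}, which extracts the $G$-invariant part, to one that extracts an arbitrary isotypical component. By definition $\chi_T^{\lambda^*}(A,\vv,\ww) = \sum_i(-1)^i[(V_{\lambda^*}^*\otimes \HH_i(A,\vv,\ww))^G]$, and since $(V_{\lambda^*}^*\otimes M)^G \cong \Hom_G(V_{\lambda^*},M)$ this is the alternating sum of the multiplicity spaces of $V_{\lambda^*}$ in $\HH_\bullet(A,\vv,\ww)$. Extracting the $V_{\lambda^*}$-multiplicity of a $G$-module $M$ is, by Schur orthogonality on the compact real form of $T_\vv$, the pairing $\frac{1}{|W|}\int_{T_\vv}\overline{\ch_{T_\vv}(V_{\lambda^*})(x)}\,\ch_{T_\vv}(M)(x)\,\Delta(x)\,dx$; using $\overline{\ch_{T_\vv}(V_{\lambda^*})} = \ch_{T_\vv}(V_{\lambda^*}^*) = \ch_{T_\vv}(V_\lambda) = f_\lambda(x)$ turns the inserted weight into exactly $f_\lambda(x)$. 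Carrying the commuting $T = T_\ww\times T_\hbar$ action along (we integrate only over the $T_\vv$-variables and retain the $t$-dependence) this produces
\[
\ch_T(\chi_T^{\lambda^*}(A,\vv,\ww)) = \frac{1}{|W|}\int_{T_\vv} f_\lambda(x)\,\ch_{T_\vv\times T}(\chi_T(A,\vv,\ww))(x,t)\,\Delta(x)\,dx\,.
\]

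Finally I would substitute the character of the Koszul complex. Because the Euler characteristic of the homology of a complex equals the Euler characteristic of the complex itself, equation~\eqref{weights} gives $\ch_{T_\vv\times T}(\chi_T(A,\vv,\ww)) = \frac{\prod_i(1-\hbar_1\hbar_2 r_i)}{\prod_j(1-s_j)}$, and inserting this into the displayed formula reproduces~\eqref{eqint2}. The only genuinely delicate point is the duality bookkeeping: one must verify that the dual appearing in the index $\lambda^*$ of $\chi_T^{\lambda^*}$ is precisely cancelled by the dual produced by complex conjugation in the orthogonality pairing, so that the weight inserted under the integral is $f_\lambda = \ch_{T_\vv}(V_\lambda)$ and not $f_{\lambda^*}$. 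Everything else is the formal character manipulation already carried out for Corollary~\ref{cor:1}.
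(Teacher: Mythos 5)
Your proposal is correct and follows essentially the same route as the paper, which deduces Corollary~\ref{cor:2} from Theorem~\ref{thm:2} exactly as Corollary~\ref{cor:1} was deduced from Theorem~\ref{thm:1}: take $\ch_T$ of~\eqref{eq:thm2}, extract the isotypical component by Weyl integration, and substitute the Koszul complex character~\eqref{weights}. Your explicit verification of the duality bookkeeping --- that $\overline{\ch_{T_\vv}(V_{\lambda^*})} = \ch_{T_\vv}(V_{\lambda^*}^*) = f_\lambda(x)$, so the index $\lambda^*$ in $\chi_T^{\lambda^*}$ produces the weight $f_\lambda$ rather than $f_{\lambda^*}$ under the integral --- is precisely the step the paper leaves implicit, and you have carried it out correctly.
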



\section{Examples}
\label{sec:examples}
In this section we explain some concrete examples, mainly from the easiest quivers already considered in the previous sections. We see how such elementary quivers still produce varieties of great interest in various fields of mathematics.

\subsection{Cotangent bundle of Grassmannian} 
\label{subsec:grass}
The quiver $Q=A_1$ with only one vertex and no arrows. The framed, doubled quiver has two vertices and two arrows connecting them in opposite directions.
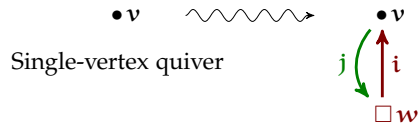
\begin{figure} [htbp]   
\centering
\scalebox{0.9}{
\begin{tikzpicture}
\node (vertex0){ $\,\,\,\,\bullet \, \bm v$};
\node [below= of vertex0, below=0.2](script) {Single-vertex quiver};
\node [right= of vertex0, right= 3cm](vertex){ $\,\,\,\,\bullet \, \bm v$};
\node [right= of vertex0, right= 0.3cm] (dummy0) {};
\node [left= of vertex, left= 0.3cm] (dummy) {};
\path (dummy0) edge[ ->, decorate, decoration=snake] (dummy);
\node [below= of vertex] (wertex) {{\color{myred} $\,\,\,\,\,\,\,\square\, \bm w$}};
\path (wertex) edge [very thick,myred, ->, out=90, in=270] node[right] {{\color{myred}$\bm i$}} (vertex);
\path (vertex) edge [very thick,mygreen, ->, out=230, in=130] node[left] {{\color{mygreen}$\bm j$}} (wertex);
\end{tikzpicture}
}
\caption{{\color{myred} Framing} and {\color{mygreen}doubling} the single-vertex quiver.}
\label{1vertex}
\end{figure}

Therefore:
\[
\mu^{-1}(0) = \{(I,J ) \in \Hom_\C(\C^w,\C^v) \oplus \Hom_\C(\C^v,\C^w) \,| \, I\circ J= 0 \}\, .
\]
Because we have only one vertex we have to choose the GIT parameter $\theta \in \Z$, and it is easy to check that the $v$-regularity condition means simply $\theta \neq 0$ (independently from $v$). For $\theta \neq 0$ we have the following identifications of the semistable locus:
\[
\theta\text{-semistable points} = 
\begin{cases}
J \,\,\,$\text{injective}$,\qquad & \theta <0\\
I \,\,\,$\text{surjective}$,\qquad & \theta >0
\end{cases}
\]
and the GIT quotient is isomorphic to the cotangent bundle $\TT^*\Gra(v,w)$ of $v$-planes in $\C^w$ in the case $\theta <0$ and to $\TT^*\Gra(w-v,w)$ in the case $\theta >0$. The two varieties are isomorphic to each other, but we have the following different identifications of the points in the Grassmannian:
\begin{center}
$\theta <0$: \quad $\im(J) \in \Gra(v,w)$\, ,
\end{center}
\begin{center}
$\theta >0$: \quad $\ker(I) \in \Gra(w-v,w)$\, .
\end{center}
The affine quotient can be identified (using some version of the fundamental theorem of invariant theory):
\[
\Mo = \Spec\big( \O(\mu^{-1}(0))^{\GL_v} \big) \cong \{ A \in M_{w \times w}(\C) \, | \, A^2= 0, \,\, \rk(A) \leq v \} \, ,
\]
where $A$ represents the composition $J\circ I :\C^w \to \C^w$. The condition on the rank is due to the fact that $A : W\to V \to W$ factorises through $V$, but sometimes it is superfluous. In fact in general $A^2=0$ forces already $\rk(A) \leq \floor*{w/2}$. The moment map is flat if and only if $2v-1 \leq w$ (see Examples~\ref{Ex4}), and only in this cases the projective morphism
\[
p: \TT^*\Gra(v,w) \to \Mo
\]
is a resolution of singularities.

\begin{figure} [htbp]   
\centering
\scalebox{0.7}{
\begin{tikzpicture}[line cap=round,line join=round,>=triangle 45,x=1cm,y=1cm]
\clip(-2.0667301858639084,-3.5108298775737685) rectangle (6.904177983559213,3.148437622799894);
\draw [rotate around={-0.016986952433784123:(-0.07048257416445844,-1.4495225109001144)},line width=1pt] (-0.07048257416445844,-1.4495225109001144) ellipse (1.1572896967611386cm and 0.07800969357369665cm);
\draw [rotate around={-0.016986952433750663:(4.353972850149957,1.8710917913669887)},line width=1pt] (4.353972850149957,1.8710917913669887) ellipse (1.1572896967609971cm and 0.07800969357372227cm);
\draw [rotate around={-0.016986952433751187:(4.412769599975073,-1.373535913594244)},line width=1pt] (4.412769599975073,-1.373535913594244) ellipse (1.1572896967610227cm and 0.078009693573676cm);
\draw [line width=1pt] (3.200855023207718,1.8780499110390612)-- (5.564542354006798,-1.3662678183465022);
\draw [line width=1pt] (5.511235542376604,1.8702178314060889)-- (3.2616552998405597,-1.365148089272453);
\draw [rotate around={-0.016986952433817152:(-0.14030371458138596,2.007867666517542)},line width=1pt] (-0.14030371458138596,2.007867666517542) ellipse (1.1572896967613584cm and 0.0780096935737114cm);
\draw [shift={(3.211722866909251,0.319254900318319)},line width=1pt]  plot[domain=2.4861135517361923:3.8330690954692126,variable=\t]({1*2.768553706340638*cos(\t r)+0*2.768553706340638*sin(\t r)},{0*2.768553706340638*cos(\t r)+1*2.768553706340638*sin(\t r)});
\draw [shift={(-3.9835293928979834,0.22047232382067172)},line width=1pt]  plot[domain=-0.5416307742601232:0.5847003029701225,variable=\t]({1*3.2231857145089258*cos(\t r)+0*3.2231857145089258*sin(\t r)},{0*3.2231857145089258*cos(\t r)+1*3.2231857145089258*sin(\t r)});
\draw [->,line width=1pt] (1.3743244348849317,0.3952414976242015) -- (2.8148448055919983,0.38764283789361326);
\draw (1.937309507816434,0.9172397489633574) node[anchor=north west] {$p$};
\draw [rotate around={0:(-0.15711976810674244,0.3078366783965148)},line width=1pt,color=yqqqqq] (-0.15711976810674244,0.3078366783965148) ellipse (0.6084018611377151cm and 0.10684862285025438cm);
\draw[line width=4pt] ;
\draw (-0.8487139768565976,-1.7719086716075592) node[anchor=north west] {$\mathcal{M}_\theta \cong \TT^*\mathbb{P}^1$};
\draw (2.502917447695727,-1.636802181078821) node[anchor=north west] {$\mathcal{M}_0 \cong \textrm{Spec} \left( \frac{\mathbb{C}[a,b,c] }{(a^2+bc)} \right)$};
\begin{scriptsize}
\draw [fill=yqqqqq] (4.389424142777053,0.2421593666230701) circle (1.5pt);
\end{scriptsize}
\end{tikzpicture}
}
\caption{The (real) picture of the case $(v,w)= (1,2)$: this is also known as Springer resolution of the nilpotent cone of $\slin_2(\C)$.}
\label{blowupT*p1}
\end{figure}
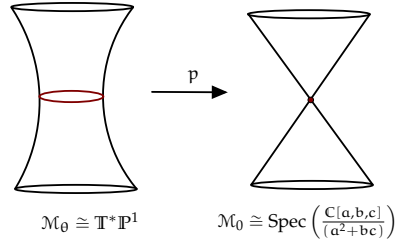

In this case in the torus $ T=T_w \times T_\hbar$ only the product $\hbar_1\hbar_2$ appears and we denote it by $\hbar$.
We can use~\eqref{eqint1} for $\chi=\chi_{-1}$ for which $\M^\chi=  \TT^*\Gra(v,w)$ and obtain a formula for the character of the ring of functions on the cotangent bundle of Grassmannian:
\begin{equation}
\label{int1}
\ch_T(\O(\TT^*\Gra(v,w))) =\frac{1}{v!} \cdot \oint_{|x_\alpha|=1} \frac{\prod_{\alpha,\beta} (1-\hbar x_\alpha^{-1}x_\beta )}{\prod_{\alpha,\gamma} (1-\hbar x_\alpha^{-1} a_\gamma ) (1-x_\alpha a_{\gamma}^{-1}) } \cdot \overbrace{\prod_{\alpha\neq \beta} (1-x_\alpha^{-1} x_\beta)}^{\Delta(x)} \, \overbrace{\prod_{\alpha} \frac{d x_\alpha }{2 \pi ix_\alpha}}^{d x}\, ,
\end{equation}
\begin{center}
where in the above $x=(x_\alpha) = (x_1,\dots ,x_v)$ and $a=(a_\gamma) =(a_1,\dots ,a_w)$.
\end{center}
The integral in the right-hand side can be computed by iterated residues, and by doing so we can recognise the localisation formula in equivariant $\KK$-theory as a sum over the fixed points $p \in (\TT^*\Gra(v,w))^T$ of the inverse of the $\KK$-theoretic Euler class of the tangent space at that point:
\begin{equation}
\label{fixed}
\ch_T(\O(\TT^*\Gra(v,w))) = \sum\limits_{\substack{B \subset \{1,\dots,w\} \\ \#B=v}} \frac{1}{\prod\limits_{\substack{\beta \in B \\ \gamma \notin B }} \left(1-\frac{a_\beta}{a_\gamma} \right) \left(1 -\hbar \frac{a_\gamma}{a_\beta} \right)} = \sum_{p \in  (\TT^*\Gra(v,w))^T} \frac{1}{\ch_T \big( \Lambda_{-1} \TT_{p}^* \left( \TT^*\Gra(v,w)\right) \big)}\, .
\end{equation}
For what concerns other sheaves, let us consider the standard representation $V=\C^v$ of $\smash{G=\GL_v(\C)}$. The associated tautological sheaf $\mathcal{V}$ on $\M^{\chi_{-1}}=\TT^*\Gra(v,w)$ is indeed the usual tautological sheaf of rank $v$. Irreducible representations are labelled by Schur functors $V_\lambda =\Schur_\lambda(V)$ where $\smash{\lambda= (\lambda_1 \geq \dots \geq \lambda_v)} $ is a integer partition of $v$ parts, and we consider the corresponding tautological sheaves $\mathcal{V}_\lambda$. For example the (standard) tautological sheaf itself is $\mathcal{V}=\mathcal{V}_{(1,0,\dots,0)}$, or powers of the Serre twisting sheaf are:
\begin{equation}
\label{ex}
\O_{\TT^*\Gra(v,w)}(m) =\mathrm{det}^{-m}(\mathcal{V}) = \mathcal{V}_{(-m,\dots,-m)}\, .
\end{equation}
A partition $\lambda$ becomes large (Definition~\ref{def:large}) in the sense that we can apply Theorem~\ref{thm:1} when all its components are negative enough (because the character $\chi=\chi_{-1}$ is negative), and it turns out that it suffices to have $\lambda_1 \leq 0$, that is equivalent to say that the partition is made of non-positive terms (an example is~\eqref{ex}, in which for $m>0$ the partition is negative and the corresponding sheaf has vanishing higher cohomologies). In this range we have 
\begin{equation}
\ch_T\HH^0\big(\TT^*\Gra(v,w),\mathcal{V}_\lambda  \big) =\frac{1}{v!} \cdot \oint_{|x|=1} \frac{ \big(\prod_{\alpha,\beta} (1-\hbar x_\alpha^{-1}x_\beta )\big) \,s_\lambda(x) }{\prod_{\alpha,\gamma} (1-\hbar x_\alpha^{-1} a_\gamma ) (1-x_\alpha a_{\gamma}^{-1}) } \cdot \prod_{\alpha\neq \beta} (1-x_\alpha^{-1} x_\beta) \, \prod_{\alpha} \frac{d x_\alpha }{2 \pi ix_\alpha}\, ,
\end{equation}
where $s_\lambda(x)=\ch_{T_v}(V_\lambda)$ is the Schur polynomial associated to the partition $\lambda$. Again, the integral in the right-hand side can be computed by means of iterated residues, giving the localisation formula for the corresponding tautological sheaf:
\begin{equation}
\ch_T\HH^0\big(\TT^*\Gra(v,w),\mathcal{V}_\lambda  \big) = \sum\limits_{\substack{B \subset \{1,\dots,w\} \\ \#B=v}} \frac{s_\lambda(a_B)}{\prod\limits_{\substack{\beta \in B \\ \gamma \notin B }} \left(1-\frac{a_\beta}{a_\gamma} \right) \left(1 -\hbar \frac{a_\gamma}{a_\beta} \right)} = \sum_{p \in  (\TT^*\Gra(v,w))^T} \frac{\ch_T(\mathcal{V}_\lambda)_{| p}}{\ch_T \big( \Lambda_{-1} \TT_{p}^* \left( \TT^*\Gra(v,w)\right) \big)}\, ,
\end{equation}
where the expression $s_\lambda(a_B)$ means that we are evaluating the Schur polynomial $s_\lambda(x_1,\dots,x_v)$ in the point $x=(a_\beta)_{\beta \in B}$. 

\begin{Remark}
\label{rem:2}
As already observed in Remark~\ref{rem:1} the right-hand side of the integral formula~\eqref{eqint1} does not depend on the character $\chi$, while a priori the left-hand side does. In~\eqref{int1} we used the character $\chi=\chi_{-1}$ for which $\smash{\M^\chi =\TT^*\Gra(v,w)}$. If we use $\chi'=\chi_{1}$ we have $\smash{\M^{\chi'} = \TT^*\Gra(w-v,w)}$. The fixed point formula for the first variety~\eqref{fixed} can be compared with the one for the second variety, and it gives a non-trivial combinatorical identity:
\begin{equation}
 \sum\limits_{\substack{B \subset \{1,\dots,w\} \\ \#B=v}} \frac{1}{\prod\limits_{\substack{\beta \in B \\ \gamma \notin B }} \left(1-\frac{a_\beta}{a_\gamma} \right) \left(1 -\hbar \frac{a_\gamma}{a_\beta} \right)} =  \sum\limits_{\substack{B \subset \{1,\dots,w\} \\ \#B=v}} \frac{1}{\prod\limits_{\substack{\beta \in B \\ \gamma \notin B }} \left(1-\frac{a_\gamma}{a_\beta} \right) \left(1 -\hbar \frac{a_\beta}{a_\gamma} \right)} \, .
\end{equation}
\end{Remark}


\subsection{Framed moduli space of torsion free sheaves on $\PP^2$} This is the case of the Jordan quiver, the quiver with one vertex and one loop, Figure~\ref{example}. Therefore:
\[
\mu^{-1}(0) = \{(X,Y,I,J ) \in \Hom_\C(\C^v,\C^v)^{\oplus 2} \oplus \Hom_\C(\C^w,\C^v) \oplus \Hom_\C(\C^v,\C^w) \,| \, [X,Y] + I\circ J = 0 \}\, .
\]
For GIT paramater $\theta \in \Z$:
\[
\theta\text{-semistable points} = 
\begin{cases}
\nexists 0 \neq S \subset V  \,\, \,\text{s.t.} \,\,\, \C\langle X,Y \rangle (S) \subset S \,\,\, \text{and}\,\,\, S \subset \ker(J), \qquad & \theta <0\\
\nexists S\subsetneq V  \,\, \,\text{s.t.} \,\,\, \C\langle X,Y \rangle (S) \subset S \,\,\, \text{and}\,\,\, \im(I) \subset S, \qquad & \theta >0
\end{cases}
\]
In both cases we have an identification between the Nakajima variety $\M^\theta$ and $M(w,v)$, the (framed) moduli space of torsion free sheaves on $\C\PP^2$ of rank $w$, second Chern class $c_2=v$, and fixed trivialisation at the line at $\infty$. The affine Nakajima variety is $\Mo \cong M_0(w,v)$ the framed moduli space of ideal instantons on $S^4 = \C^2 \cup \{\infty\}$. The map $\smash{p:M(w,v) \to M_0(w,v)}$ is always a resolution of singularities because the moment map is always flat.

When the framing is $w=1$ we obtain the Hilbert-Chow morphism from the Hilbert scheme of $v$ points on $\C^2$ to the symmetric $v$-power:
\[
p: \mathrm{Hilb}_v(\C^2) \to \Sym^v (\C^2)\, .
\]
For general $v$ and $w$ the integral formula looks like:
\begin{equation}
\label{int2}
\ch_T\O( M(w,v)) =\frac{1}{v! }\cdot \oint_{|x|=1} I (x,a,\hbar)  \cdot \prod_{\alpha\neq \beta} (1-x_\alpha^{-1} x_\beta) \, \prod_{\alpha} \frac{d x_\alpha }{2\pi i x_\alpha}\, ,
\end{equation}
where 
\[ 
I (x,t,\hbar)= 
 \frac{\prod_{\alpha,\beta} (1-\hbar_1 \hbar_2 x_\alpha^{-1}x_\beta )}{ \prod_{\alpha,\beta} (1-\hbar_1x_\alpha^{-1} x_\beta )(1-\hbar_2 x_\alpha^{-1} x_\beta ) \cdot \prod_{\alpha,\gamma} (1-\hbar_1\hbar_2 x_\alpha^{-1} t_\gamma ) (1-x_\alpha t_{\gamma}^{-1}) } \, ,
 \]
and it is also known as the integral formula for Nekrasov partition function (proved for example in Appendix A of \cite{FeMu}).
 
For other isotypical components, let us say that we fixed $\chi=\chi_1$. Again we have a tautological sheaf of rank $v$, $\mathcal{V}$, and other sheaves associated to irreducible representations are labelled by Schur functors $\mathcal{V}_\lambda$ where $\lambda$ is an integer partition of $v$ parts. In this case the largeness condition indeed means that the partition is big enough, and it turns out that it suffices for it to be non-negative $\lambda_1\geq \dots \geq \lambda_v \geq 0$. In this range we have:
\begin{equation}
\label{this}
\ch_T \HH^0(M(w,v), \mathcal{V}_\lambda) =\frac{1}{v!} \oint_{|x|=1} I (x,a,\hbar)  \cdot  s_\lambda(x)\cdot \prod_{\alpha\neq \beta} (1-x_\alpha^{-1} x_\beta) \, \prod_{\alpha} \frac{d x_\alpha }{2\pi i x_\alpha}\, .
\end{equation}
For $\lambda \geq 0$ the Schur polynomial $s_\lambda(x)$ is indeed an actual polynomial (and not a Laurent polynomial), and therefore with~\eqref{this} we recover the integral formula for Nekrasov partition function with matter fields (the matter field is represented by the sheaf $\mathcal{V}_\lambda$ in this case) which was proved for example in \cite{Mu}.
\subsection{Symplectic dual of $\TT^*\PP^{n-1}$}

$X=\TT^*\Gra(k,n)$ has a symplectic dual, $X\dual$, which for the choice of parameters $2k \leq n$ can be shown to be also a Nakajima quiver variety (\cite{RiSmVaZh}). Specifically it is the Nakajima variety associated to the following $A_{n-1}$ quiver, with dimension vectors:
\[
\begin{cases}
\vv= (1,2, \dots, k-1, \underbrace{k,\dots, k}_{(n-2k+1)\text{-times}}, k-1, \dots, 2,1)\, ,\\
\ww= (w_1,\dots,w_{n-1}) \qquad w_i = \delta_{i,k} + \delta_{i,n-k}\, .
\end{cases}
\]
We restrict to the case $k=1$, for which dimension vectors are 
\begin{equation}
\label{dimension}
\begin{cases}
\vv= (1, \dots , 1)\, ,\\
\ww=(1,0,\dots,0,1)\, ,
\end{cases}
\end{equation}
and the corresponding Nakajima quiver variety is the symplectic dual of $\TT^*\PP^{n-1}$. For $n=2$ we go back to the $A_1$ case with dimensions $v=1$ and $w=2$, so we find that $\TT^*\PP^1$ is symplectic dual to itself. Let us study the other cases $n \geq 3$ which are different.

As usual we denote the arrows in the quiver by $x_1,\dots,x_{n-2}$, their dual by $y_1,\dots ,y_{n-2}$ and then we have $i_1,j_1$ and $i_{n-1},j_{n-1}$ because of the non-trivial framing at the vertices $1$ and $n-1$. The zero locus of the moment map is the following algebraic variety in a $2n$-dimensional affine space
\[
\mu^{-1}(0) \cong \Spec\left(\frac{ \C [x_1,y_1, \dots ,x_{n-2},y_{n-2},i_1,j_1,i_{n-1},j_{n-1} ] }{i_1j_1 = x_1 y_1= x_2 y_2= \dots =x_{n-2}y_{n-2} = -i_{n-1}j_{n-1}}\right) \, .
\]
The gauge group is a $n-1$-dimensional torus $G_\vv= \GL_1(\C)^{n-1} = (\C^\times)^{n-1}$, and the affine Nakajima variety is identified with the ADE singularity of type $A_{n-1}$:
\begin{equation}
\label{duval}
\Mo \cong \Spec \left( \frac{\C[x,y,z]}{xy=z^{n}}\right) \cong \C^2 \slash{\Z_n}\, ,
\end{equation}
where $x=x_1 \cdots x_{n-2} i_1 j_{n-1}$, $y=y_1\cdots y_{n-2} i_{n-1}j_1$, $z=x_1 y_1$. We recall that the action $\smash{\Z_n\acts \C^2}$ that gives the corresponding ADE singularity of type $A_{n-1}$ is given by the embedding $\smash{\Z_n \subset \SL_2(\C)}$ in which a $n$-th root of unity $\xi\in \Z_n$ becomes the matrix $\mathrm{diag}(\xi,\xi^{-1})\in \SL_2(\C)$.

We fix GIT parameter $\chi = \chi_{\theta_+}$ with $\theta_+= (1,1,\dots,1)$. The corresponding smooth Nakajima quiver variety is a consecutive ($n-1$ times) blowup of the singular point $x=y=z=0$ in~\eqref{duval}:
\begin{equation}
\label{nakajimaduval}
p: \M^{\chi_+} =\widetilde{\C^2\slash \Z_n} \xrightarrow{\qquad} \Mo = \C^2\slash\Z_n\, ,
\end{equation}
with exceptional fiber $p^{-1}(0)$ given by $n-1$ copies of Riemann spheres $\PP^1$ intersecting in such a way that their underlying intersection graph is $A_{n-1}$ (see \cite{duV}), as shown in Figure~\ref{spheres}.
\begin{figure}[H]
\centering
\includegraphics[scale=0.13]{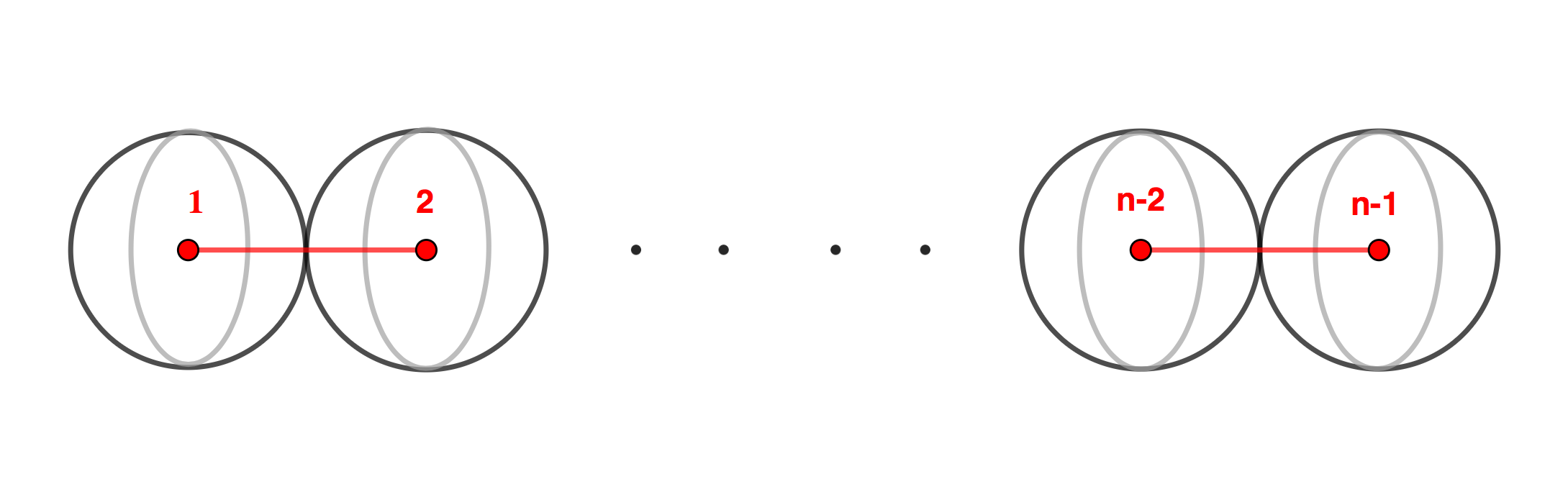}
\caption{Every sphere is replaced by a vertex and two vertices are linked by as many arrows as intersection points of the corresponding spheres.}
\label{spheres}
\end{figure}

The associated derived representation scheme is
\[
\DRep_{\vv,\ww}=\Spec \left( \C [x_1,y_1,x_2,y_2,\dots, x_{n-2},y_{n-2}, i_1,j_1,i_{n-1},j_{n-1}, \vartheta_1,\dots, \vartheta_{n-1} ] \right)\, ,
\]
where $\vartheta_i$ have homological degree $1$ and differential
\begin{equation}
\label{diff}
\begin{cases}
d \vartheta_1 = -y_1x_1 + i_1j_1\, ,\\
d \vartheta_k = x_{k-1}y_{k-1} - y_k x_k\,, \quad (k=2,\dots, n-2)\, ,\\
d \vartheta_{n-1} = x_{n-2}y_{n-2} + i_{n-1}j_{n-1}\, ,
\end{cases}
\end{equation}
and they are invariants under the gauge group $G_\vv=\GL_1^{n-1}$, so that the associated character scheme is simply
\[
\DRep_{\vv,\ww}^{G_\vv} \cong \Spec \left( \left(\frac{\C[x,y,z_1,\dots,z_{n-2},z_{n-1},z_n] }{xy=z_1 \cdots z_n }\right) [\vartheta_1,\dots,\vartheta_{n-1}]\right)\, ,
\]
where $x,y$ are the same classes as before in~\eqref{duval}, $z_k = x_k y_k$ for $k=1,\dots,n-2$, $z_{n-1} =i_1j_1$, $z_n=i_{n-1}j_{n-1}$. We denote the variables in the equivariant torus $T = T_{\ww}\times T_\hbar$ by $(a,\tilde{a},\hbar_1,\hbar_2)$ (where $a$ is on the vertex $1$ and $\tilde{a}$ on the vertex $n-1$) and we have:
\begin{equation}
\ch_T \O \left( \widetilde{\C^2\slash \Z_n} \right) = \ch_T \O \left( \C^2\slash\Z_n \right) = \ch_T\left( \chi_T(\DRep_{\vv,\ww}^{G_\vv}) \right)  = \frac{1+\hbar_1 \hbar_2\dots+ \hbar_1^{n-1}\hbar_2^{n-1}}{\left( 1- \hbar_1^{n-1} \hbar_2 \frac{a}{\tilde{a}}\right) \left(1- \hbar_1 \hbar_2^{n-1} \frac{\tilde{a}}{a}\right)}\, .
\end{equation}

\appendix 

\section{Projective model structure on T-equivariant dg-algebras}
\label{app:A}

In this Appendix we give a proof of Theorem~\ref{thm:T-homtheory} that gives a projective-like model structure on the category of $T$-equivariant dg-algebras $(\DGA_k^+)^T$, for an algebraic torus $T=(k^\times)^r$. We use the same strategy used in~\cite{BeRa}, in which the authors prove that the category of \emph{bigraded} dg-algebras $\mathtt{BiDGA}_k$ has a projective-like model structure\footnote{Which ultimately follows the explicit proofs of the existence of the projective model structure on $\DGA_k^+$ by \cite{Mun} or \cite{FeHaTh}.}. The key observation is to recognise that $\mathtt{BiDGA}_k$ being the category of dg-algebras with an additional non-negative (polynomial) compatible grading, is equivalent to the category of $T$-equivariant dg-algebras with a polynomial torus action (i.e. weight spaces are only for non-negative weights), and that the polynomial condition can be dropped, and substituted by the rational condition, in which weights can be arbitrary integers.

More precisely, weight spaces for a torus $T=(k^\times)^r$ are $r$-tuples of integers $n \in \Z^r$, and we observe that the category of dg-algebras with a rational $T$-action $(\DGA_k^+)^T$ is equivalent to the category of dg-algebras $A \in \DGA_k^+$ with:
\begin{enumerate}
\item An additional grading of the underlying chain complex
 $\smash{A=\oplus_{n\in \Z^r}} A(n)$. This means that each $A(n)$ is a complex of vector spaces preserved by the differential in $A$: $d A(n) \subset A(n)$. 
\item The grading is compatible with the multiplication in $A$: $A(n) \cdot A(m) \subset A (n+m)$.
\end{enumerate}
In fact, on one hand if $A \in (\DGA_k^+)^T$ then for $n \in \Z^r$ we define $\smash{A(n)=\{ a \in A \,| \, t\cdot a = t^n a, \,\, \forall t \in T\} }$ as the corresponding weight space and the above 2 conditions are satisfied thanks to the rationality of the action (recall, Definition~\ref{def:rationalaction}). On the other hand, obviously if we have such a decomposition we define the $T$-action on $A$ accordingly by $t\cdot a := \sum_{n} t^n a(n)$, where $a=\sum_n a(n)$, and the resulting $T$-action is rational.

The observation that $(\DGA_k^+)^T$ is equivalent to the category of dg-algebras with an additional grading as described above will be also useful later, and we will use indifferently one or the other property, according to what is more convenient from time to time.

Let us also denote by $k[T]=\O(T)$, a Laurent polynomial ring in $r$ variables and observe that 
\begin{Lem}
\label{lem:adjkt}
The forgetful functor $U: (\DGA_k^+)^T \to \DGA_k^+$ is left-adjoint to the ``free $T$-equivariant extension'' functor: 
\begin{equation}
\adjunct{(\DGA_k^+)^T}{\DGA_k^+}{U}{k[T]\otimes (-)}\, .
\end{equation}
\end{Lem}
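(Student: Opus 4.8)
The plan is to exhibit the claimed adjunction directly as a natural bijection on hom-sets, using the weight-space description of rational $T$-actions recalled just above. First I would pin down the functor $k[T]\otimes(-)$ as a functor $\DGA_k^+ \to (\DGA_k^+)^T$: for $B \in \DGA_k^+$ I give $k[T]\otimes B$ the tensor-product algebra structure, the differential $\id\otimes d_B$ (so that $k[T]$ sits in homological degree $0$ with zero differential, keeping the object non-negatively graded), and the $T$-action that is translation on the factor $k[T]$ and trivial on $B$. Since $k[T]=\bigoplus_{n\in\Z^r} k\,t^n$ with $t^n$ of weight $n$, every element of $k[T]\otimes B$ lies in a finite-dimensional $T$-stable subspace on which $T$ acts through characters, so this action is rational and $k[T]\otimes B$ is a genuine object of $(\DGA_k^+)^T$. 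In the grading language this says simply that the weight-$n$ component of $k[T]\otimes B$ is exactly $t^n\otimes B$.

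Next I would compute $\Hom_{(\DGA_k^+)^T}(A,k[T]\otimes B)$ using the decomposition $A=\bigoplus_{n}A(n)$. Equivariance forces a morphism $\phi$ to send $A(n)$ into the weight-$n$ part $t^n\otimes B$, so $\phi$ is completely determined by a family of degree-$0$ linear maps $\phi_n\colon A(n)\to B$, equivalently by the single map $\psi:=(\epsilon\otimes\id_B)\circ\phi\colon UA\to B$ obtained by evaluating $k[T]$ at the identity of $T$ (here $\epsilon\colon k[T]\to k$, $t^n\mapsto 1$, is the counit, an algebra map concentrated in degree $0$, so $\epsilon\otimes\id_B$ is a morphism of $\DGA_k^+$ after forgetting the action). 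Conversely, any $\psi\colon UA\to B$ in $\DGA_k^+$ determines $\phi$ by $\phi(a)=t^n\otimes\psi(a)$ for $a\in A(n)$, extended linearly, and this $\phi$ is $T$-equivariant by construction. I would then check that these two assignments are mutually inverse, which is immediate from $(\epsilon\otimes\id_B)(t^n\otimes b)=b$ together with the counit identity for the weight decomposition.

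The remaining point is that the correspondence $\phi\leftrightarrow\psi$ matches the dg-algebra structure on both sides, and this is where the multiplicativity of the weight grading on $k[T]$ is used. For $a\in A(n)$ and $a'\in A(n')$ one has $aa'\in A(n+n')$, and comparing $\phi(aa')=t^{n+n'}\otimes\psi(aa')$ with $\phi(a)\phi(a')=t^{n+n'}\otimes\psi(a)\psi(a')$ shows that $\phi$ is multiplicative iff $\psi$ is; the unit axiom follows from $1_A\in A(0)$, and compatibility with differentials follows because $d$ preserves weights and $k[T]$ carries the zero differential in degree $0$. Naturality in $A$ and $B$ is then routine. The only genuinely delicate ingredient is the one already guaranteed by the rationality hypothesis of Definition~\ref{def:rationalaction}: rationality is exactly what ensures that the structure map $A\to k[T]\otimes A$ lands in the honest (uncompleted) tensor product, so that $k[T]\otimes(-)$ --- rather than a completed ``cofree'' construction --- is the right adjoint to $U$. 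I expect the bookkeeping around multiplicativity and the weight grading to be the main thing to state carefully, while the existence of the bijection itself is forced by equivariance.
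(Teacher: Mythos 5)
Your proof is correct and follows essentially the same route as the paper's: the bijection is given in one direction by composing with evaluation at the identity of $T$ (your $\epsilon\otimes\id_B$, the paper's $\ev_1\otimes 1_B$), and in the other by the weight-space formula $a\mapsto t^n\otimes\psi(a)$ for $a\in A(n)$, exactly as in the paper. You simply spell out more of the routine verifications (that $k[T]\otimes B$ is a rational $T$-object, multiplicativity, compatibility with differentials, naturality) that the paper leaves implicit.
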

\begin{proof}
The adjunction is given by the natural isomorphisms:
\[
\Hom_{\DGA_k^+} (UA,B) \cong \Hom_{(\DGA_k^+)^T} (A,k[T]\otimes B)\, ,
\]
where to a $T$-equivariant morphism $\varphi: A \to k[T]\otimes B$ we assign the composition with the evaluation map at $1 \in T$:
\[
A \xrightarrow{\varphi} k[T]\otimes B \xrightarrow{\ev_1 \otimes 1_B} k\otimes B \cong B\, .
\]
Conversely if we start from a map $f: UA \to B$ which is not necessarily $T$-equivariant, we can construct a $T$-equivariant map $\varphi: A \to k[T]\otimes B$ by decomposing:
\[
\varphi : \bigoplus\limits_{n\in \Z^r} A(n) \to \bigoplus\limits_{n \in \Z^r} B \cdot t^n\, ,
\]
and defining $\varphi_{|_{A(n)}} : A(n) \to B\cdot t^n $ as $f_{|_{A(n)}}(-) \cdot t^n$.
\end{proof}
In order to prove Theorem~\ref{thm:T-homtheory} we need a few definitions and lemmas. Throughout this section of the Appendix we denote by $\CC=\DGA_k^+$ and by $\CC^T = (\DGA_k^+)^T$.
\begin{Notation}
We denote by $\COF,\WE,\FIB$ the collection of cofibrations, weak equivalences, and fibrations in the projective model structure on $\CC$. So $\FIB$ are surjective maps in positive homological degrees, $\WE$ are the quasi-isomorphisms, and $\COF=\llp(\WE \cap \FIB) $, where $\llp(-)$ denotes the collection of morphisms with the left lifting property with respect to another collection of morphisms. Finally recall that a fibration which is also a quasi-isomorphism is actually surjective in \emph{all} homological degrees, so that $\WE \cap \FIB$ consists of surjective quasi-isomorphisms.
\end{Notation}
\begin{Defn}
\label{def:tate}
A morphism $i: S \to R \in \CC^T$ is a \emph{$T$-equivariant noncommutative Tate extension} (also simply a Tate extension) if there is a (possibly infinite) sequence $V^{(0)} \subset V^{(1)} \subset V^{(2)} \subset \dots$ of (homologically) graded, $T$-equivariant vector spaces such that
\begin{enumerate}
\item Each $S \ast_k T(V^{(i)})$ has a differential and a compatible embedding $S \ast_k T(V^{(i)}) \subset R$ such that at the limit $V=\cup_i V^{(i)}$:
\[
S \ast_k TV = \mathop{\lim_{{\longrightarrow}_i}}  S \ast_k T(V^{(i)}) = R\, .
\]
\item Each differential has the property that $d (V^{(i)}) \subset S \ast_k T(V^{(i-1)})$ (and for $i=0$, $d (V^{(0)}) \subset S$).
\end{enumerate}
We denote the collecion of such morphism by $\TE \subset \Mor(\CC^T)$.
\end{Defn}

\begin{Lem}
\label{lem:tate}
\begin{enumerate}[label=(\roman*)]
\item Every morphism $S \to A$ in $\CC^T$ has a factorisation of the form $S\xrightarrow{i} R \xrightarrow{p} A $ where $i \in \TE$ and $p\in U^{-1}(\WE\cap \FIBB )$ (is a surjective quasi-isomorphism).
\item Every Tate extension has the left lifting property with respect to morphisms that are surjective quasi-isomorphisms: $\TE \subset \llp(U^{-1}(\WE \cap \FIBB))$.
\end{enumerate}
\end{Lem}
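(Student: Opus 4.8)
The plan is to establish both parts by the classical step-by-step construction of a semifree (Tate) resolution, carried out $T$-equivariantly. The only feature beyond the absolute case treated in \cite{BeRa} is the bookkeeping of the torus action: since $T=(k^\times)^r$ is linearly reductive, the weight-space decomposition recorded after Definition~\ref{def:rationalaction} shows that every rational $T$-representation is semisimple, so all the sections, lifts and complements chosen below may be taken $T$-equivariantly, equivalently performed one weight space at a time (the differential preserving weights, by condition (1) of that decomposition).

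\emph{Part (i).} Given $\varphi\colon S\to A$ in $\CC^T$, I would build $R=S\ast_k TV$ with $V=\bigcup_{i\ge 0}V^{(i)}$ by induction, together with a compatible map $p\colon R\to A$ over $S$. For $V^{(0)}$ I adjoin free $T$-homogeneous generators with zero differential mapping onto a generating set of the cycles of $A$ as an $S$-algebra; then $d(V^{(0)})=0\subset S$, and in degree $0$ (where every element is a cycle) this already surjects onto $A_0$. Having constructed $R^{(i-1)}:=S\ast_k T(V^{(i-1)})$, I adjoin $V^{(i)}$ to lift the not-yet-hit elements and to kill the kernel of $H_\bullet(R^{(i-1)})\to H_\bullet(A)$: for a $T$-homogeneous $a\in A$ not in the image, $da$ is a cycle, hence $da=p(\zeta)$ for some cycle $\zeta\in R^{(i-1)}$, and I add a generator $v$ of degree $\deg a$ with $dv:=\zeta$ and $v\mapsto a$; relative cycles are killed the same way. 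In each case the new differentials are cycles already lying in $R^{(i-1)}$, so $d(V^{(i)})\subset S\ast_k T(V^{(i-1)})$ as Definition~\ref{def:tate}(2) demands, and every choice is made $T$-equivariantly. Passing to the colimit gives $i\colon S\to R$ in $\TE$ and $p\colon R\to A$ surjective with $H_\bullet(p)$ an isomorphism, i.e. $p\in U^{-1}(\WE\cap\FIBB)$.

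\emph{Part (ii).} Let $i\colon S\to R$ be a Tate extension with filtration $(V^{(j)})$, let $p\colon B\to C$ lie in $U^{-1}(\WE\cap\FIBB)$, and consider a commutative square with top $u\colon S\to B$ and bottom $R\to C$. Because $R=S\ast_k TV$ is a free product, a lift $\ell\colon R\to B$ amounts to a $T$-equivariant graded linear map $V\to B$ commuting with the differentials and extending $u$; I would construct it by induction along the filtration, starting from $u$ on $S$. At the inductive step it suffices, for each $T$-homogeneous new generator $v\in V^{(j)}$ of degree $n$ with prescribed image $c\in C_n$ and prescribed $z:=\ell(dv)\in B_{n-1}$ (a cycle already defined, satisfying $p(z)=dc$), to find $b\in B_n$ with $p(b)=c$ and $db=z$. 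Choosing first $b_0$ with $p(b_0)=c$ (possible on the single weight space carrying $c$, by surjectivity of $p$), the element $db_0-z$ is a cycle in $\ker p$; from the $T$-equivariant short exact sequence $0\to\ker p\to B\to C\to 0$ together with $H_\bullet(B)\cong H_\bullet(C)$ one sees that $\ker p$ is acyclic in every weight, so $db_0-z=dw$ for some $w\in\ker p$, and $b:=b_0-w$ solves the problem. Because $v,c,z,w$ all share a single weight, the lift is automatically $T$-equivariant. Taking the colimit of the partial lifts yields the desired $\ell$, giving $\TE\subset\llp(U^{-1}(\WE\cap\FIBB))$.

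The main obstacle is not the homological algebra, which is routine, but verifying that every step can be performed \emph{$T$-equivariantly} and compatibly with the weight grading: this is exactly where linear reductivity of the torus enters, supplying the equivariant sections in (i) and the weight-by-weight acyclicity of $\ker p$ used in (ii). A secondary point requiring care in (i) is ordering the induction so that the differential-support condition $d(V^{(i)})\subset S\ast_k T(V^{(i-1)})$ (with $d(V^{(0)})\subset S$) of Definition~\ref{def:tate} holds while still achieving both surjectivity and the killing of all relative homology.
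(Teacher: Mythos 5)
Your argument is correct and is essentially the paper's own proof: the paper simply remarks that the classical factorization and lifting arguments of \cite{FeHaTh} (Proposition 3.1, relying on Proposition 2.1(ii)) carry over to $T$-equivariant, i.e.\ additionally $\Z^r$-graded, objects, and what you have written is exactly that classical Tate-resolution construction and generator-by-generator lifting argument with the weight-space bookkeeping (rational torus representations split into weight spaces preserved by all differentials and maps) made explicit. One cosmetic point: $Z(A)$ need not contain the image of $S$ (the differential of $S$ may be nonzero), so in your step $0$ the generators should map onto a set generating the cycles of $A$ as a $k$-algebra, or simply onto all homogeneous cycles, rather than ``as an $S$-algebra''; nothing else in the argument changes.
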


For the proof one can check that the proof of Proposition 3.1 (which relies on Proposition 2.1(ii)) of \cite{FeHaTh} can be used also in this case of $T$-equivariant (i.e. additionally graded) objects.

Now let $x$ be a variable of positive homological degree as well as of some weight $n \in \Z^r$ for the torus $T$, and set $V_x := [ 0 \to k\cdot x \to k\cdot dx \to 0 ]$, and its tensor algebra $T(V_x) \in \CC^T$. Extensions by objects of this form play another important role:

\begin{Defn}
\label{def:special}
A morphism in $\CC^T$ of the form $\smash{S \to S \ast_k \coprod_{i \in I } T(V_{x_i})}$, where $I$ is any, possibly uncountably infinite, indexing set, is called a \emph{special extension}. We denote the collection of special extensions by $\SE \subset \Mor(\CC^T)$.
\end{Defn}

\begin{Lem}
\label{lem:special}
\begin{enumerate}[label=(\roman*)]
\item Every morphism $S \to A$ in $\CC^T$ has a factorisation of the form $S\xrightarrow{i} R \xrightarrow{p} A$ where $i \in \SE$ and $p \in U^{-1}( \FIBB)$.
\item $\SE \subset \llp ( U^{-1} (\FIBB))$.
\item $\SE \subset U^{-1}(\WE)$.
\end{enumerate}
\end{Lem}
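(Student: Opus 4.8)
The plan is to treat the three parts in turn, the first two by the same generator-by-generator constructions that prove the analogous statements for Tate extensions (Lemma~\ref{lem:tate}), and the third by a direct homology computation after forgetting the torus action. For (i), given $f\colon S\to A$ in $\CC^T$, I would adjoin one disk for every homogeneous element of $A$ in positive degree. Concretely, let $a$ range over a homogeneous $k$-basis of $\bigoplus_{d\geq 1}A_d$, each $a$ of degree $d_a\geq 1$ and weight $n_a$, and set
\[
R \;=\; S \ast_k \coprod_{a} T(V_{x_a}),
\]
where $x_a$ has degree $d_a$ and weight $n_a$. Then $i\colon S\to R$ is a special extension by construction. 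Define $p\colon R\to A$ by $p|_S=f$ and $p(x_a)=a$; the value $p(dx_a)=d\,p(x_a)=da$ is forced, so $p$ is a well-defined $T$-equivariant chain map, and it is surjective in every positive degree because each positive-degree basis element $a=p(x_a)$ lies in its image. Hence $p\in U^{-1}(\FIBB)$ and $p\circ i=f$, giving the required factorisation.

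For (ii), consider a lifting square with $i\colon S\to S\ast_k\coprod_i T(V_{x_i})$ in $\SE$ on the left and $p\colon B\to C$ in $U^{-1}(\FIBB)$ on the right, with top map $g\colon S\to B$ and bottom map $h\colon R\to C$. By the universal property of the free product and of the tensor algebras $T(V_{x_i})$, a $T$-equivariant lift $\ell\colon R\to B$ extending $g$ is determined by choosing, for each $i$, an element $\ell(x_i)\in B$ of the same degree and weight as $x_i$; the value $\ell(dx_i)=d\,\ell(x_i)$ is then forced. The only constraint is $p(\ell(x_i))=h(x_i)$. Since $p$ is $T$-equivariant it splits as a direct sum over weight spaces, so membership in $U^{-1}(\FIBB)$ forces $p$ to be surjective in each positive degree on every weight space; as $x_i$ has positive degree, a preimage $\ell(x_i)$ of $h(x_i)$ of the correct weight exists. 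Finally $p(\ell(dx_i))=d\,p(\ell(x_i))=d\,h(x_i)=h(dx_i)$, so $p\circ\ell=h$ and $\ell$ solves the lifting problem.

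For (iii), I would compute homology after applying $U$. Writing $W=\bigoplus_i V_{x_i}$, the coproduct of tensor algebras is $\coprod_i T(V_{x_i})=T_k(W)$, and the free product is identified, via the universal property in $S\downarrow\CC$, with the tensor algebra over $S$ on the free $S$-bimodule generated by $W$:
\[
R \;=\; S\ast_k T_k(W)\;\cong\; T_S\!\big(S\otimes_k W\otimes_k S\big)\;=\;\bigoplus_{n\geq 0}M^{\otimes_S n},\qquad M=S\otimes_k W\otimes_k S,
\]
with $i\colon S\to R$ the inclusion of the $n=0$ summand. The differential preserves the tensor-length grading (the internal differential of $M$ sends $M$ to $M$ and lowers no length), so $\bigoplus_{n\geq 1}M^{\otimes_S n}$ is a complementary subcomplex. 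For $n\geq 1$ one has $M^{\otimes_S n}\cong S\otimes_k(W\otimes_k S)^{\otimes_k n}$ as a complex of $k$-vector spaces, which contains $W$ as a tensor factor. Each disk $V_{x_i}=[\,0\to kx_i\to k\,dx_i\to 0\,]$ is acyclic, hence so is $W$; since $k$ is a field, Künneth shows every such $M^{\otimes_S n}$ with $n\geq 1$ is acyclic. Therefore $Ui\colon S\to R$ is a quasi-isomorphism, i.e. $i\in U^{-1}(\WE)$.

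The main obstacle is the identification of the free product with $T_S\!\big(S\otimes_k W\otimes_k S\big)$ in part (iii): once the free product is resolved into the pieces $M^{\otimes_S n}$, acyclicity is immediate from Künneth, but one must set up this $S$-bimodule description carefully and check that it is an isomorphism of dg-algebras (in particular that the tensor-length grading is respected by the differentials) in order to reduce the homology of the free product to that of a single acyclic factor. Parts (i) and (ii) are then routine adaptations of the corresponding arguments for Tate extensions, the only new point being the bookkeeping of weights to keep all constructions inside $\CC^T$.
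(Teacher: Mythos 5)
Your proof is correct, and for part (i) it is essentially the paper's own argument: the paper adjoins one disk $T(V_{x_a})$ for every weight-homogeneous element $a$ of positive degree and maps $x_a\mapsto a$, whereas you index over a weight-homogeneous basis of $\bigoplus_{d\geq 1}A_d$ — an immaterial difference, since in both cases the image of $p$ is a subalgebra containing $f(S)$ and everything of positive degree.

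Where you genuinely add value is in parts (ii) and (iii), which the paper dispatches with the phrase ``(ii) and (iii) are quite obvious.'' Your argument for (ii) — that a lift out of $S\ast_k\coprod_i T(V_{x_i})$ only requires choosing weight-homogeneous preimages $\ell(x_i)$, and that equivariance of $p\in U^{-1}(\FIBB)$ forces surjectivity on each weight space in positive degrees (project a preimage onto the relevant weight component) — is exactly the bookkeeping the paper leaves implicit, and it is right. Your proof of (iii) is the most substantive supplement: identifying $S\ast_k T_k(W)\cong T_S(S\otimes_k W\otimes_k S)$ via the universal property, observing that the differential preserves tensor length because $dW\subset W$ and $dS\subset S$, rewriting $M^{\otimes_S n}\cong S\otimes_k(W\otimes_k S)^{\otimes_k n}$, and killing all summands with $n\geq 1$ by K\"unneth over the field $k$ (each contains the acyclic factor $W$) is a complete and clean justification that special extensions lie in $U^{-1}(\WE)$. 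So the verdict is: correct, same route as the paper where the paper gives a route, and a legitimate filling-in of the two claims the paper asserts without proof.
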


\begin{proof} 
(i) It suffices to consider the set of elements of $A$ of positive homological degrees as well as of some weight for the torus action: $I:= \{ a \in A(n)_i \, | \, n \in \Z^r, \, i  > 0\}$. For each $a \in I$ we consider the obvious $\smash{TV_{x_a} \xrightarrow{p_a} A}$ given by $p_a(x_a) =a$ (and consequently $p_a(dx_a) = da$). Then
\[
S \xrightarrow{i} S \ast_k  \coprod_{a \in I } T(V_{x_a}) \xrightarrow{f\ast_k \coprod\limits_{a \in I} p_a} A
\]
is the desired factorisation. (ii) and (iii) are quite obvious.
\end{proof}

Now we have everything we need to prove that the following definition yields the desired model structure on $\CC^T$:
\begin{Defn}
\label{def:Tmodel}
We define weak equivalences, fibrations and cofibrations in $\CC^T$ as:
\begin{equation}
\WE^T:=U^{-1}(\WE) \,,\qquad \FIB^T:= U^{-1}(\FIB) \,,\qquad  \COF^T:= \llp (\WE^T \cap \FIB^T)=\llp( U^{-1}( \WE \cap \FIB))\,.
\end{equation}
\end{Defn}

We observe that, by Lemma~\ref{lem:tate}(ii), Tate extensions are cofibrations: $\smash{\TE \subset \COF^T}$, and by Lemma~\ref{lem:special}, special extensions are acyclic cofibrations: $\smash{\SE \subset \WE^T\cap \COF^T}$. In fact, it is useful to observe that 
\begin{Prop}
\label{prop:retract}
Every acyclic cofibration in $\CC^T$ is a retract of a special extension.
\end{Prop}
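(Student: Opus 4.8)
The plan is to run the classical \emph{retract argument}, feeding it the factorisation produced in Lemma~\ref{lem:special}. Let $f : S \to A$ be an acyclic cofibration in $\CC^T$, so that $f \in \WE^T \cap \COF^T$. First I would apply Lemma~\ref{lem:special}(i) to factor $f$ as
\[
S \xrightarrow{\ i\ } R \xrightarrow{\ p\ } A\,,
\]
where $i \in \SE$ is a special extension and $p \in U^{-1}(\FIBB)$ is a fibration.

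The next step is to upgrade $p$ to an acyclic fibration. By Lemma~\ref{lem:special}(iii) the special extension $i$ lies in $\WE^T$, and $f \in \WE^T$ by assumption; since $\WE^T = U^{-1}(\WE)$ is defined by pulling back quasi-isomorphisms along $U$, it inherits the two-out-of-three property, so the relation $p \circ i = f$ forces $p \in \WE^T$. Hence $p \in \WE^T \cap \FIB^T$.

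Now I would exploit that $f$, being a cofibration, has the left lifting property against the acyclic fibration $p$. Applied to the square with top edge $i$, left edge $f$, right edge $p$, and bottom edge $\id_A$ --- which commutes because $p \circ i = f = \id_A \circ f$ --- this yields a lift $g : A \to R$ with $g \circ f = i$ and $p \circ g = \id_A$. Together with $p \circ i = f$, these identities assemble into the retract diagram
\[
\begin{tikzcd}
S \arrow[r, "\id"] \arrow[d, "f"'] & S \arrow[r, "\id"] \arrow[d, "i"] & S \arrow[d, "f"] \\
A \arrow[r, "g"'] & R \arrow[r, "p"'] & A
\end{tikzcd}
\]
whose rows compose to the respective identities (the top row to $\id_S$, the bottom row to $p \circ g = \id_A$). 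This exhibits $f$ as a retract of $i \in \SE$, completing the proof.

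Since all the genuine work has already been carried out in Lemma~\ref{lem:special}, the argument is purely formal, and I do not anticipate a real obstacle; the only point deserving explicit verification is that $p$ is a weak equivalence, and this is immediate from the two-out-of-three property of $\WE^T$. The same template, with Lemma~\ref{lem:special} replaced by Lemma~\ref{lem:tate}, would show that every cofibration is a retract of a Tate extension, which is the companion statement one expects to need in order to complete the verification of the model axioms on $\CC^T$.
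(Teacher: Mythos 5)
Your proof is correct and is essentially identical to the paper's own argument: factor the acyclic cofibration through a special extension followed by a fibration via Lemma~\ref{lem:special}(i), upgrade the fibration to an acyclic fibration by two-out-of-three, and use the defining lifting property of cofibrations against acyclic fibrations to produce the retract diagram. The only difference is cosmetic (the paper cites the two-out-of-three property forward to axiom (MC2) in the proof of Theorem~\ref{thm:T-homtheory}, while you verify it directly from $\WE^T = U^{-1}(\WE)$), so nothing further is needed.
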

\begin{proof}
Let $i:A \to B$ be an acyclic cofibration and let us factor it as $\smash{A \xrightarrow{\widetilde{i}} R \xrightarrow{q} B}$ where $\widetilde{i}$ is a special extension and $q$ is a fibration, according to Lemma~\ref{lem:special}(i). $q$ is also a weak equivalence, because of the 2-out-of-3 property (see (MC2) in the proof of the next Theorem), therefore it is an acyclic fibration, and we can find a lift of the diagram:
\begin{equation}
\begin{tikzcd} 
  A  \arrow[r, "\widetilde{i}"] \arrow[d, "i"]
    & R \arrow[d, "q"]  \\
 B \arrow[r, "1_B"] \arrow[ru, dashrightarrow,"\exists l"]
 &B
\end{tikzcd}
\end{equation}
which proves that $i$ is a retract of the special extension $\widetilde{i}$:
\begin{equation}
\begin{tikzcd} 
  A  \arrow[r, "1_A"] \arrow[d, "i"]  & A \arrow[r, "1_A"] \arrow[d, "\widetilde{i}"] &A \arrow[d,"i"] \\
 B \arrow[r, "l"] \arrow[rr, bend right=35, "1_B"] & R\arrow[r, "q"] & B
\end{tikzcd}
\end{equation}
\end{proof}

\begin{Thm*}[\ref{thm:T-homtheory}]
\begin{enumerate}
\item Definition~\ref{def:Tmodel} defines a model structure on $\CC^T$.
\item The forgetful functor $U :\CC^T \to \CC$ preserves cofibrations.
\end{enumerate}
\end{Thm*}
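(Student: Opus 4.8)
The plan is to check the five Quillen axioms (MC1)--(MC5) for the three classes of Definition~\ref{def:Tmodel}, and then to read off part (2) by a retract argument. Most of the axioms are formal once one remembers that $\WE^T$ and $\FIB^T$ are defined as the $U$-preimages of the corresponding classes in $\CC$, so I would dispatch those first. For (MC1) I would note that $\CC^T$ inherits all small limits and colimits from $\CC$, computed on underlying objects and equipped with the induced $\Z^r$-grading (equivalently, the induced rational $T$-action). Since $U$ preserves composition, the two-out-of-three property (MC2) for $\WE^T = U^{-1}(\WE)$ follows from the same property of quasi-isomorphisms in $\CC$. For (MC3), the classes $\WE^T$ and $\FIB^T$ are closed under retracts because $\WE$ and $\FIB$ are and retracts are preserved by $U$, while $\COF^T = \llp{(\WE^T \cap \FIB^T)}$ is closed under retracts simply because it is defined by a left lifting property. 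Finally, one half of (MC4), that cofibrations lift against acyclic fibrations, is immediate: by definition $\COF^T = \llp{(\WE^T \cap \FIB^T)}$, and $\WE^T \cap \FIB^T = U^{-1}(\WE \cap \FIB)$ is exactly the class of acyclic fibrations.

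For the factorizations (MC5) I would invoke the two extension lemmas. Lemma~\ref{lem:tate}(i) factors any morphism as $S \xrightarrow{i} R \xrightarrow{p} A$ with $i$ a Tate extension and $p$ a surjective quasi-isomorphism; here $p \in U^{-1}(\WE \cap \FIBB) = \WE^T \cap \FIB^T$ is an acyclic fibration, and by Lemma~\ref{lem:tate}(ii) every Tate extension lies in $\llp{(\WE^T \cap \FIB^T)} = \COF^T$, so this is a (cofibration, acyclic fibration) factorization. Dually, Lemma~\ref{lem:special}(i) factors any morphism as a special extension followed by a map in $U^{-1}(\FIBB) = \FIB^T$; a special extension is a weak equivalence by Lemma~\ref{lem:special}(iii) and a cofibration since Lemma~\ref{lem:special}(ii) gives it the left lifting property against all of $\FIB^T \supseteq \WE^T \cap \FIB^T$, hence it is an acyclic cofibration, yielding the second factorization.

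The only genuinely substantive point inside the theorem is the remaining half of (MC4), that acyclic cofibrations lift against fibrations, and this is where I would use Proposition~\ref{prop:retract}: every acyclic cofibration is a retract of a special extension, special extensions have the left lifting property against fibrations by Lemma~\ref{lem:special}(ii), and left lifting properties pass to retracts, so acyclic cofibrations lift against all fibrations. This completes (MC4) and hence part (1). For part (2) I would run the standard retract argument: given $f \in \COF^T$, factor it as $f = p \circ i$ with $i \in \TE$ a Tate extension and $p$ an acyclic fibration by (MC5); lifting $f$ against $p$ in the commutative square with top edge $i$, left edge $f$, right edge $p$ and bottom edge $1$, exhibits $f$ as a retract of $i$. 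Applying $U$, a retract diagram maps to a retract diagram, and $U(i)$ is a (nonequivariant) Tate extension, which is a cofibration of $\DGA_k^+$ since such extensions generate its projective model structure; as $\COF$ is closed under retracts, $U(f) \in \COF$.

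The main obstacle is not in the theorem itself but already absorbed into the inputs: the real work sits in the extension Lemmas~\ref{lem:tate} and~\ref{lem:special} and in Proposition~\ref{prop:retract}. Within the present proof, the one step requiring care is the lifting of acyclic cofibrations against fibrations, since the definition only builds in lifting against \emph{acyclic} fibrations; the retract-of-a-special-extension characterization is exactly what bridges this gap, and the analogous characterization (as a retract of a Tate extension) powers the descent of cofibrations along $U$ in part (2).
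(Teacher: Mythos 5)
Your proof of part (1) is correct and follows the paper's own argument step for step: (MC1)--(MC3) are formal from the definitions $\WE^T=U^{-1}(\WE)$, $\FIB^T=U^{-1}(\FIB)$, $\COF^T=\llp{(\WE^T\cap\FIB^T)}$; the factorizations (MC5) come from Lemma~\ref{lem:tate}(i) and Lemma~\ref{lem:special}(i); and the nontrivial half of (MC4) is reduced, exactly as in the paper, to Proposition~\ref{prop:retract} together with Lemma~\ref{lem:special}(ii).

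For part (2), however, you take a genuinely different route. The paper's proof is a one-line Quillen-adjunction argument: by Lemma~\ref{lem:adjkt} the forgetful functor $U$ is \emph{left} adjoint to $k[T]\otimes(-)$, and since $k[T]\otimes(-)$ preserves weak equivalences and fibrations (hence acyclic fibrations), its left adjoint $U$ preserves cofibrations by transposing lifting problems across the adjunction. You instead run a retract argument: factor a cofibration $f$ as a Tate extension $i\in\TE$ followed by an acyclic fibration via Lemma~\ref{lem:tate}(i), use the lifting property of $f$ to exhibit $f$ as a retract of $i$, then apply $U$ and observe that $U(i)$ is a nonequivariant Tate extension, hence a cofibration in $\DGA_k^+$, and that $\COF$ is closed under retracts. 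Both arguments are valid. The paper's is shorter and needs no structure theory of cofibrations in $\CC^T$ beyond the already-established model axioms, but it does lean on the slightly unusual fact that here the forgetful functor is a \emph{left} adjoint. Your argument costs one extra input --- the nonequivariant statement that Tate-type extensions have the left lifting property against surjective quasi-isomorphisms in $\DGA_k^+$ (this is the content of Proposition 3.1 of Felix--Halperin--Thomas, which the paper cites as the template for Lemma~\ref{lem:tate}, so it is legitimately available; your phrase that such extensions ``generate'' the projective model structure should really be stated as this lifting property) --- but in exchange it yields the extra structural fact that every cofibration in $\CC^T$ is a retract of a Tate extension, i.e.\ the cofibration analogue of Proposition~\ref{prop:retract}, which the paper never states.
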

\begin{proof}
(1) (MC1) (notation of Definition 3.3 of \cite{DwSp}): finite limits and colimits exist in $\CC^T$ because equalizers, coequalizers, finite product and finite coproducts exist (the same constructions as in $\CC$ work in the equivariant setting). (MC2) $\WE^T$ has the 2-out-of-3 property because it is $U^{-1}(\WE)$ with $\WE$ having the 2-out-of-3 property. (MC3) $\WE^T$ and $\FIB^T$ are closed under retracts because, again, defined as $U^{-1}$ of classes closed under retracts. $\COF^T $ are closed under retracts because they are defined as the morphisms with the left lifting property with respect to some class $\llp {\mathcal{A}}$ , and this is always closed under retracts (it does not matter what $\mathcal{A}$ is). (MC4) We need to prove that for a diagram in $\CC^T$ of the following form:
\begin{equation} 
\begin{tikzcd}
  A  \arrow{r}{f} \arrow{d}{i}
    & C \arrow{d}{p}  \\
 B \arrow{r}{g}
&D
 \end{tikzcd}
\end{equation}
a lift exists in the following situations: (i) $i$ is a cofibration and $p$ is an acyclic fibration, (ii) $i$ is an acyclic cofibration and $p$ is a fibration. (i) is obviously true by the definition of cofibrations. To prove that a lift exists in the case (ii), thanks to Proposition~\ref{prop:retract}  we only need to find a lift when $i$ is a special extension, but this is true by Lemma~\ref{lem:special}(ii). (MC5) We need to prove that each morphism $S \to A$ in $\CC^T$ has factorisations of the form: (i) cofibration followed by an acyclic fibration, (ii) acyclic cofibration followed by a fibration. (i) follows from Lemma~\ref{lem:tate}(i), and (ii) follows from Lemma~\ref{lem:special}(i).

(2) This follows from the fact that $U$ is left adjoint to $k[T]\otimes (-)$ (Lemma~\ref{lem:adjkt}), and the latter preserves weak equivalences and fibrations, therefore $U$ preserves cofibrations.
\end{proof}

\section{Representation theory of $G=G_\vv$}
\label{app:B}

In this Appendix we recall the theory of irreducible representations of (a product of) general linear groups and we fix the notation. Polynomial irreducible representations of $\GL_v(\C)$ are labelled by ordinary (non-negative) partitions $\smash{\lambda=(\lambda_1,\dots, \lambda_v)}$. More precisely, they are obtained by applying the Schur functors $\Schur_\lambda(-): \Vect_\C \to \Vect_\C$ to the standard representation $V=\C^v$:
\begin{equation}
\label{Vla}
\Schur_\lambda(V)\, .
\end{equation}
Their characters, the Schur polynomials, form a linear basis of the ring of symmetric polynomials in $v$ variables:
\begin{equation}
s_\lambda (x) := \ch( \Schur_\lambda(V)) \in \Z [ x_1,\dots ,x_v]^{\Sigma_v}\, .
\end{equation}
Examples are 
\begin{enumerate}
\item $V=\C^v$ itself is $\Schur_{(1,0,\dots,0)}(V)$ and $s_{(1,0,\dots,0)}(x) = x_1 + \dots + x_v$.
\item More generally $\Schur_{(d,0,\dots,0)}(V) = \Sym^d (V)$ and $s_{(d,0,\dots,0)}(x) =h_d(x)$ the complete symmetric polynomial.
\item For $\lambda=(1,1, \dots, 1,0,\dots 0)$ with $1$ repeated $d$-times, $\Schur_\lambda(V) = \Lambda^d(V)$ and $s_\lambda(x) = e_d(x)$ the elementary symmetric polynomial.
\item $1$-dimensional representations are given by $\underline{m}:= (m,m,\dots ,m)$, for which $\smash{\Schur_{\underline{m}}(V) = \det(V)^{m }}$, and $s_\lambda(x) = e_v(x)^m = x_1^m \cdots x_v^m$.
\end{enumerate}
If we shift a partition $\lambda$ to $\lambda+ \underline{m}:=(\lambda_1 +m,\dots, \lambda_v +m)$ we have
\begin{equation}
\Schur_{\lambda + \underline{m}}(V) = \Schur_\lambda(V) \otimes \det(V)^{ m }\, ,
\end{equation}
which allows to extend the definition of Schur functors to partitions made possibly of some negative parts $\smash{ \lambda \in  \Part_v:= \{ \lambda \in \Z^v \, | \, \lambda_1 \geq \dots \geq \lambda_v\} } $ as
\begin{equation}
\label{rala}
\Schur_\lambda(V) := \Schur_{\lambda - \underline{\lambda_v}}(V) \otimes \det(V)^{\lambda_v}\, .
\end{equation}
All irreducible rational representations of $\GL_v(\C)$ are of the form~\eqref{rala} for some integer-valued partition $\lambda \in  \Part_v$. Their characters are generalised Schur polynomials and they form a linear basis of the ring of symmetric Laurent polynomials in $v$ variables:
\begin{equation}
s_\lambda(x) = \ch (\Schur_\lambda(V)) \in \Z[x_1,x_1^{-1},\dots, x_v,x_v^{-1} ]^{\Sigma_v}\, .
\end{equation}
If now $\vv=(v_1,\dots, v_n)$ is a dimension vector and $\smash{G_\vv =\prod_i \GL_{v_i}(\C) }$ is a product of general linear groups, then its irreducible rational representations are labelled by $n$-tuples of partitions $\smash{\lambda = (\lambda^{(1)}, \dots ,\lambda^{(n)})\in \prod_i \Part_{v_i}}$, as the external tensor product of Schur modules:
\begin{equation}
V_\lambda:= \Schur_{\lambda^{(1)}}(\C^{v_1}) \boxtimes \dots \boxtimes \Schur_{\lambda^{(n)}} (\C^{v_n}) \, .
\end{equation}
Their characters are products of (generalised) Schur polynomials and we denote them by (the same notation as in~\eqref{eqint2}):
\begin{equation}
f_\lambda(x) : = \ch( V_\lambda) = s_{\lambda^{(1)}} (x^{(1)}) \cdots s_{\lambda^{(n)}} (x^{(n)}) \, ,
\end{equation}
where $x=(x^{(1)},\dots ,x^{(n)}) $ and each $x^{(i)}$ is a set of $v_i$ variables: $\smash{x^{(i)}= (x^{(i)}_1,\dots ,x^{(i)}_{v_i})}$.



\bibliographystyle{amsplain}
\bibliography{Derived}

\end{document}